\documentclass[11pt]{amsart}
\usepackage[utf8]{inputenc}
\usepackage{amscd} 
\usepackage{lineno}
\usepackage[colorlinks,anchorcolor=blue,citecolor=blue,linkcolor=blue,urlcolor=blue,bookmarksdepth=2]{hyperref}
\usepackage[margin=1in]{geometry}
\usepackage{amsfonts, amsthm, amssymb, amsmath, stmaryrd, color}
\usepackage{mathrsfs,array,mathtools}

\usepackage{tikz}
\usetikzlibrary{cd,arrows}
\tikzset{>=stealth}
\tikzcdset{arrow style=tikz}
\tikzset{link/.style={column sep=1.8cm,row sep=0.16cm}}

\renewcommand{\O}{\mathrm{O}}
\newcommand{\vol}{\mathrm{vol}}
\newcommand{\Supp}{\mathrm{Supp}}
\newcommand{\Ml}{\mathrm{Ml}}

\newcommand{\End}{\mathrm{End}}
\newcommand{\Sh}{\mathrm{Sh}}
\newcommand{\bdelta}{\boldsymbol{\delta}}
\newcommand{\Eis}{\mathrm{Eis}}
\newcommand{\ab}{\mathrm{ab}}
\newcommand{\Tam}{\mathrm{Tam}}
\newcommand{\ord}{\mathrm{ord}}

\DeclareMathOperator{\Ad}{\mathrm{Ad}}

\newcommand{\bbA}{\mathbb{A}}

\newcommand{\bbC}{\mathbb{C}}

\newcommand{\bbQ}{\mathbb{Q}}

\newcommand{\bbR}{\mathbb{R}}

\newcommand{\bbZ}{\mathbb{Z}}

\newcommand{\calB}{\mathcal{B}}

\newcommand{\calD}{\mathcal{D}}

\newcommand{\calE}{\mathcal{E}}

\newcommand{\calN}{\mathcal{N}}

\newcommand{\calO}{\mathcal{O}}

\newcommand{\calU}{\mathcal{U}}

\newcommand{\frakA}{\mathfrak{A}}

\newcommand{\frakd}{\mathfrak{d}}

\newcommand{\frakg}{\mathfrak{g}}

\newcommand{\frakH}{\mathfrak{H}}

\newcommand{\frakk}{\mathfrak{k}}

\newcommand{\frakN}{\mathfrak{N}}

\newcommand{\frakp}{\mathfrak{p}}

\newcommand{\frakQ}{\mathfrak{Q}}

\newcommand{\frakt}{\mathfrak{t}}

\newcommand{\bfc}{\mathbf{c}}

\newcommand{\bff}{\mathbf{f}}

\newcommand{\bfh}{\mathbf{h}}

\newcommand{\bfn}{\mathbf{n}}

\newcommand{\Hom}{\mathrm{Hom}}

\newcommand{\Ind}{\mathrm{Ind}}
\newcommand{\Lie}{\mathrm{Lie}}

\newcommand{\tr}{\mathrm{tr}}

\newcommand{\GL}{\mathrm{GL}}

\newcommand{\Sp}{\mathrm{Sp}}

\newcommand{\GSp}{\mathrm{GSp}}

\newcommand{\Mat}{\mathrm{Mat}}

\newcommand{\Sym}{\mathrm{Sym}}

\newcommand{\diag}{\mathrm{diag}}
\newcommand{\sgn}{\mathrm{sgn}}

\newcommand{\fini}{\mathrm{fin}}
\newcommand{\bs}{\backslash}

\newcommand{\cusp}{\mathrm{cusp}}
\newcommand{\Aut}{\mathrm{Aut}}

\newcommand{\vep}{\varepsilon}
\newcommand{\isom}{\cong}
\newcommand{\abcd}{\begin{pmatrix}a & b \\ c & d \end{pmatrix}
}
\newcommand{\std}{\mathrm{std}}
\newcommand{\Gal}{\mathrm{Gal}}

\theoremstyle{plain}
\newtheorem{thm}{Theorem}[section]
\newtheorem{lem}[thm]{Lemma}
\newtheorem{cor}[thm]{Corollary}
\newtheorem{prop}[thm]{Proposition}

\theoremstyle{definition}

\newtheorem{rem}[thm]{Remark}

\newtheorem{defn}[thm]{Definition}

\title[Arithmeticity of critical $L$-values]{Arithmeticity of critical $L$-values for Hilbert-Siegel modular forms in the left half plane and $s=1$}
\author{Shuji Horinaga}
\address{NTT Institute for Fundamental Mathematics, NTT Inc, Japan}
\author{Hirotaka Kakuhama}
\address{Kyoto University, Japan}
\date{}

\begin{document}
\begin{abstract}
    For a cuspidal automorphic representation $\pi_f$ generated by a Hilbert-Siegel modular form $f$ and a Hecke character $\mu$ with parallel signature, 
    we show the arithmeticity of critical $L$-values of the standard $L$-function $L^S(s,\pi_f, \mu)$ twisted by $\mu$ in the left half plane.
    As a consequence, we show the arithmeticity at $s=1$ under the assumption that there exists a nonarchimedean place $v$ such that $\pi_v$ is unramified and $\mu_v$ is ramified.
\end{abstract}
\maketitle
% \vspace{-1em}
% \begin{center}
%   \begin{minipage}{0.9\textwidth}
%     \centering
%     {\small
%     $^{1}$ NTT Institute for Fundamental Mathematics, NTT Inc, Japan\\
%     $^{2}$ Kyoto University, Japan\\
%     }
%   \end{minipage}
% \end{center}

\section{Introduction}

One of the central themes in number theory is to investigate the arithmeticity of critical $L$-values for automorphic $L$-functions.
For elliptic modular forms, Shimura \cite{75_Shimura, 76_Shimura, 77_Shimura} and Manin \cite{75_Manin} establish the arithmeticity of such $L$-values.
Harris \cite{81_Harris} and Sturm \cite{81_Sturm} independently investigate the case of Siegel modular forms.
Garrett \cite{84_Garrett}, following the philosophy of the doubling integral, establishes the explicit formula for the restriction of Eisenstein series of $\Sp_{4n}$ and proves several arithmeticity results of Siegel modular forms.
In a series of papers \cite{83_Shimura_On_Eis_ser, 90_Shimura, 95_Shimura, 00_Shimura}, Shimura develops the theory of nearly holomorphic modular forms and proves the arithmeticity of critical values for several cases (see also \cite{85_boe,2000_Boe_Sch,81_Harris, 91_Mizumoto}).
Their works are basically restricted to the case of scalar valued Siegel modular forms.
The vector valued case has also been treated in \cite{HPSS, 00_Kozima, 2020_Liu, 95_Takayanagi} and \cite[Theorem 5.13]{22_Shih-yu_Chen}.

However, the arithmeticity at critical points in the left half plane, and at $s=1$ when $F=\bbQ$, remains open (cf.~\cite{2000_Boe_Sch}).
The central obstruction in the left half plane is the lack of results on local $\gamma$-factors and their arithmetic properties.
For the case $s=1$ and $F=\bbQ$, the associated Siegel Eisenstein series is not nearly holomorphic, making it difficult to discuss its behavior under the $\Aut(\bbC)$-action.
The present paper provides an Eisenstein series associated with a different section that is well-behaved under the $\Aut(\bbC)$-action.

Roughly speaking, to prove arithmeticity, we use the integral representation of $L^S(s, \pi, \mu)$
\[
L^S(s, \pi, \mu) \sim \langle E(g,s-1/2,f), \varphi\rangle
\]
where $\pi$ is a cuspidal automorphic representation of $\Sp_{2n}(\bbA_F)$ over a totally real field $F$, $\mu$ is a Hecke character of $F^\times F_{\infty,+}^\times \bs \bbA_F^\times$, $\varphi$ is an automorphic form in $\pi$, and $E(g,s,f)$ is the Eisenstein series on $\Sp_{4n}(\bbA_F)$ corresponding to a section $f_s$.
The rationality of the Eisenstein series implies the arithmeticity of the critical $L$-values.
In the left half plane, it is challenging to find the archimedean sections and calculate the corresponding zeta integrals.
In the case $s=1, F=\bbQ$ and $\mu^2 = \mathbf{1}$, Shimura's choice of sections does not give a nearly holomorphic Eisenstein series.
For these reasons, the study of the critical values at $s \leq 1$ has been limited.
When $\pi$ is unramified everywhere and $\mu = 1$, one may choose $f_s$ to be unramified everywhere, in which case one obtains the arithmeticity at $s=1$ only when $n \equiv 3 \bmod 4$ (see \cite{00_Kozima,91_Mizumoto}).

Shimura's strategy in the proof of arithmeticity is to define, at nonarchimedean places, the local sections $f^\Sh_{v,s}$ whose support is contained in the big cell.
This simplifies the computations of the zeta integrals, but we lose control of the Eisenstein series at the critical point $s=1$.
Our strategy is to introduce another section closely related to Shimura's.
The main idea of this paper is to consider a new local section defined by
\[
\bfh_{v,s} = M_{w,-s} f^\Sh_{v,-s}.
\]
The local zeta integrals associated with $\bfh_{v,s}$ coincide with the local $\gamma$-factors up to some explicit factors.
Furthermore, the corresponding Eisenstein series is nearly holomorphic even in the case $s=1$.
Combining the zeta integral of $\bfh_{v,s}$ with the functional equation for the standard $L$-functions \cite[Theorem 4]{2005_Lapid-Rallis}, we obtain the following arithmeticity result in the left half plane.
\begin{thm}[Corollary \ref{main_thm_s_0_negative}]\label{thm_main_intro}
    Let $F$ be a totally real field of degree $d$.
    Let $\pi$ be a cuspidal automorphic representation of $\Sp_{2n}(\bbA_F)$ and let $\mu$ be a Hecke character of $F^\times F_{\infty, +}^\times \bs \bbA_F^\times$.
    Let $S$ be a finite set of places containing the ramified places and the archimedean places.
    Suppose that for all archimedean places $v$, $\pi_v$ is a holomorphic discrete series representation of weight $\lambda_v = (\lambda_{v,1},\ldots,\lambda_{v,n})$, and $L(s,\pi,\mu)$ has a critical point $m$.
    Take $\varphi \in \pi$ such that the Fourier coefficients of the Hilbert-Siegel modular cusp form corresponding to $\varphi$ lie in a CM field.
    For any $\sigma \in \Aut(\bbC)$, we denote by $\pi_{^\sigma\varphi}$ the cuspidal automorphic representation generated by the twist ${^\sigma \varphi}$ of $\varphi$ by $\sigma$.
    When $m \leq 0$, we have 
    \begin{align*}
    &\sigma\left(\frac{L^S(m, \pi, \mu)}{D_F^{n/2}G(\mu)^n(2\pi\sqrt{-1})^{\sum_{v \mid \infty}\sum_{j=1}^n\lambda_{v,j} + dnm}\langle\varphi, \varphi\rangle}\right)\\
    &\qquad =
    \frac{L^S(m, \pi_{^\sigma\varphi}, {^\sigma\mu})}{D_F^{n/2}G({^\sigma\mu})^n(2\pi\sqrt{-1})^{\sum_{v \mid \infty}\sum_{j=1}^n\lambda_{v,j} + dnm}\langle{^\sigma\varphi}, {^\sigma\varphi}\rangle}
    \end{align*}
    except for the case where $m=0,F=\bbQ, S = S_\infty$ and $\mu = \mathbf{1}$.
    Here, $G(\mu)$ is the Gauss sum, $\langle\varphi,\varphi\rangle$ is the Petersson norm with respect to the Tamagawa measure, and $S_\infty$ is the set of archimedean places of $F$.
\end{thm}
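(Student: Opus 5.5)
We derive the statement from the doubling integral with the new section $\bfh_{s}$, whose global Eisenstein series is nearly holomorphic, together with the Lapid--Rallis functional equation.

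\emph{Step 1: the zeta integral.} Fix a critical point $m\le 0$ and work at $s=m-1/2$ of the doubling integral. At each nonarchimedean $v\in S$ we take $\bfh_{v,s}=M_{w,-s}f^{\Sh}_{v,-s}$. Unfolding expresses the local zeta integral of $\bfh_{v,s}$ as the zeta integral of $f^\Sh_{v,-s}$ times the normalized intertwining factor. By the local functional equation, this equals the local $\gamma$-factor $\gamma(s+1/2,\pi_v\times\mu_v,\psi_v)$ up to explicit factors. These factors are ratios of abelian $L$-factors of $\mu_v$, a power of $q_v$, and an $\epsilon$-factor giving Gauss sums. At places outside $S$ we use unramified sections, and at archimedean places the holomorphic-type section of the appropriate weight. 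The resulting global identity is
\[
\langle E(\iota(g,h),s,\bfh),\varphi\rangle \;=\; c(s)\,\frac{L^S(1-s-\tfrac12,\pi^\vee,\mu^{-1})}{\text{abelian }L\text{-factors}}\;\prod_{v\in S_f}\gamma_v\cdot(\text{arch.\ factor})\cdot\varphi .
\]

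\emph{Step 2: from $1-m$ to $m$.} Apply \cite[Theorem 4]{2005_Lapid-Rallis}, which gives $L^S(s,\pi,\mu)=\prod_{v\in S}\gamma(s,\pi_v\times\mu_v,\psi_v)\,L^S(1-s,\pi^\vee,\mu^{-1})$. This turns the $\gamma$-factors into a relation between $L^S(m,\pi,\mu)$ and the global integral. Since $1-m\ge1$ lies in the right half plane, the Eisenstein series there is controlled. The point of choosing $\bfh$ is that the Siegel Eisenstein series $E(\cdot,s,\bfh)$ at this point is (up to a normalizing abelian $L$-value) the functional-equation image of Shimura's series at $-s$. Hence it is nearly holomorphic and its Fourier coefficients are explicit: singular-series products built from $f^\Sh$, Gauss sums, and powers of $\pi$. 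From these coefficients we check $\Aut(\bbC)$-equivariance, with $\sigma$ sending $\mu$ to ${}^\sigma\mu$ and introducing $G(\mu)^n$ and $D_F^{n/2}$.

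\emph{Step 3: arithmeticity.} The pullback pairing of a nearly holomorphic form with ${}^\sigma$-rational Fourier coefficients against $\varphi$ is $\sigma$-equivariant after dividing by $\langle\varphi,\varphi\rangle$. This uses holomorphic projection and the standard Garrett/Shimura argument. It remains to show that the explicit local factors, namely $\gamma$-factor normalizations, abelian $L$-values $L(\cdot,\mu^j)$ at critical integers, and the archimedean constant, transform correctly and account for the power $(2\pi\sqrt{-1})^{\sum\lambda_{v,j}+dnm}$. For this we use Siegel--Shimura arithmeticity of Dirichlet/Hecke $L$-values, $\sigma(L(k,\mu)/(G(\mu)(2\pi i)^{dk}))$-equivariance, and rationality of local $\gamma$-factors, whose explicit unramified/Gauss-sum form satisfies ${}^\sigma\gamma(\pi_v,\mu_v)=\gamma({}^\sigma\pi_v,{}^\sigma\mu_v)$ up to a $\sqrt{q}$-type twist.

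\emph{Main obstacle.} The excluded case $m=0$, $F=\bbQ$, $S=S_\infty$, $\mu=\mathbf 1$ is exactly where there is no finite place to place $\bfh_v$ on, and where an abelian normalizing factor (essentially $\zeta(1)$ or an intertwining pole) degenerates. I expect the hardest step to be showing near-holomorphy and nonvanishing of the normalizing constant at $s$ corresponding to $m=0$ and $m=1$ in all other cases. I would first compute the constant term of $E(\cdot,s,\bfh)$ via $M_w\bfh=M_wM_wf^\Sh$, which is scalar times $f^\Sh$, and verify that no pole occurs exactly when some finite place with $\mu_v$ ramified or a ramified $\pi_v$ exists, or when $F\neq\bbQ$.
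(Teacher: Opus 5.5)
\textbf{There is a genuine gap: you evaluate at the wrong point.} You use the paper's ingredients, but your argument then needs exactly the input the paper is built to avoid.

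Two of your local claims are reversed. By Lemma \ref{lem_ram_zeta_int}, the zeta integral of $\bfh_{v,s}$ produces $\gamma_v(1/2-s,\pi_v^\vee,\mu_v^{-1},\psi_{v,\bdelta_v^{-1}})$, which is essentially the reciprocal of your $\gamma(s+1/2,\pi_v\times\mu_v,\psi_v)$. The unramified places produce $L^S(s+1/2,\pi,\mu)$, not the $L^S(1/2-s,\pi^\vee,\mu^{-1})$ in your display.

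So at your point $s=m-1/2$ the pullback is $L^S(m,\pi,\mu)\prod_{v\in S_f}\gamma_v(1-m,\pi_v^\vee,\mu_v^{-1},\psi_{v,\bdelta_v^{-1}})$. By Lapid--Rallis this is, up to archimedean factors, the right-half-plane value $L^S(1-m,\pi^\vee,\mu^{-1})$. This matches your own remark that $E(\cdot,m-1/2,\bfh)$ is the functional-equation image of Shimura's series at $1/2-m$, whose pullback gives $L(1-m)$ with only volumes at $S_f$ (Lemma \ref{lem_zeta_int_ramified_big_cell}). Nothing beyond Corollary \ref{cor_S_1_empty} comes out.

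To get from there to $L^S(m)$, you need $\sigma$-equivariance, finiteness and nonvanishing of $\prod_{v\in S_f}\gamma_v(m,\pi_v,\mu_v,\psi_v)$ for arbitrary ramified $\pi_v$. Your Step 3 supplies this through an ``explicit unramified/Gauss-sum form'' of these $\gamma$-factors. No such form exists, for example, for supercuspidal $\pi_v$. This is precisely the obstruction the introduction names; compare Remark \ref{rem_s=1}.

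\textbf{The missing idea} is to use $\bfh$ at the right-half-plane point $s=s_0'-1/2=1/2-m\ge 1/2$, placing $\bfh_{v,s}$ at every finite $v\in S$. There the pullback is $L^S(1-m,\pi,\mu)\prod_{v\in S_f}\gamma_v(m,\pi_v^\vee,\mu_v^{-1},\psi_{v,\bdelta_v^{-1}})$, times Tate $\gamma$-factors of $\mu_v^{-1},\mu_v^{-2}$ and archimedean constants. Theorem \ref{thm_FE_std_L} turns this directly into $L^S(m,\pi^\vee,\mu^{-1})$, up to explicit factors and archimedean $\gamma$-factors that are rational multiples of powers of $\sqrt{-1}\,\pi$. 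The $\gamma$-factors of $\pi_v$ are thus absorbed and their arithmetic never has to be known (Theorem \ref{thm_main} with $S_2=\emptyset$). Self-duality of the local factors and $\mu\mapsto\mu^{-1}$ then finish.

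At this point you cannot inherit near holomorphy from Shimura's series, whose partner now sits at $m-1/2<0$. Instead:
\begin{itemize}
\item The archimedean section generates a lowest weight module at $s=k-n-1/2$.
\item The Fourier coefficients are explicit because $\ell_{\psi_v,B}\circ M_{v,w_{2n},-s}$ is a multiple of $\ell_{\psi_v,B}$. By Ikeda's formula (Theorem \ref{Whittaker_Int_op}) the multiplier involves only Tate $\gamma$-factors of $\mu_v,\mu_v^2,\mu_v\chi_{D_B}$.
\item In the delicate case $m=0$, $F=\bbQ$, $\mu^2=\mathbf{1}$, near holomorphy at $s=1/2$ is not just a matter of excluding a pole in a constant term. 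It comes from $\bfh_{p,1/2}\in\ker M^*_{p,w_{2n},1/2}$. By Kudla--Rallis this makes the global section a sum of coherent anisotropic Siegel--Weil sections and incoherent ones whose residue vanishes (Lemma \ref{lem_holomorphy_Eis_ser}).
\end{itemize}
This is also why the exception is exactly $S=S_\infty$: any finite $v\in S$, ramified or not, can carry $\bfh_v$.
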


We prove a more general version of the above theorem in Theorem \ref{thm_main}, which is formulated for two finite disjoint sets $S_1$ and $S_2$ of nonarchimedean places.
Theorem \ref{thm_main_intro} then follows by choosing $S_2 = \emptyset$.
We note that the case where $m=0,F=\bbQ, S = S_\infty$ and $\mu = \mathbf{1}$ is excluded from our main result.
This exception is particularly important, since it relates to the arithmeticity of the standard $L$-function for Siegel cusp forms at $s=1$ by the functional equation.
This case remains open when $n \equiv 1 \bmod 4$ and $n \neq 1$ by the functional equation.
See Remark \ref{rem_s=0_except} for details.

Another remaining case is the arithmeticity at $s=1, F=\bbQ$ and $\mu^2 = \mathbf{1}$.
As an application of our theorem, we obtain a partial arithmeticity result in this remaining case.
See also Remark \ref{rem_s=1} for a discussion of the condition in the following theorem.

\begin{thm}\label{thm_intro_s=1}
    With the notation of Theorem \ref{thm_main_intro}, suppose that $F=\bbQ,\mu^2 = \mathbf{1}$, and there exists a nonarchimedean place $v$ such that $\pi_v$ is unramified and $\mu_v$ is ramified.
    If $m=1$ is a critical point, then
    \begin{align*}
    &\sigma\left(
    \frac{L^S(1, \pi, \mu)}{D_F^{(n+1)/2}G(\mu)^{n+1}(2\pi \sqrt{-1})^{\sum_{v \mid \infty}{\sum_{j=1}^n \lambda_{v,j} + d(n+1)}}\langle\varphi, \varphi\rangle}\right)
    \\
    &\qquad
    =
    \frac{L^S(1, \pi_{{^\sigma \varphi}}, {^\sigma \mu})}{D_F^{(n+1)/2}G({^\sigma \mu})^{n+1}(2\pi \sqrt{-1})^{\sum_{v \mid \infty}{\sum_{j=1}^n \lambda_{v,j} + d(n+1)}}\langle{^\sigma \varphi}, {^\sigma \varphi}\rangle}.
\end{align*}
\end{thm}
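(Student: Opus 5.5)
We plan to deduce Theorem \ref{thm_intro_s=1} from the general left-half-plane result, Theorem \ref{thm_main}, via the global functional equation of Lapid--Rallis \cite[Theorem 4]{2005_Lapid-Rallis}. The point $s=1$ is reflected to $s=0$. By Theorem \ref{thm_main_intro} the only obstruction at $m=0$ is the case $F=\bbQ$, $S=S_\infty$, $\mu=\mathbf{1}$. The hypothesis that some finite $v_0$ has $\pi_{v_0}$ unramified and $\mu_{v_0}$ ramified forces $\mu\neq\mathbf{1}$ and $v_0\in S$, so the exceptional case never occurs. Since $\mu^2=\mathbf{1}$, the contragredient twist $\mu^{-1}$ in the functional equation equals $\mu$, so the reflected $L$-function is again $L^S(s,\pi,\mu)$. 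The whole argument therefore reduces to controlling the local factors that relate $L^S(1,\pi,\mu)$ to $L^S(0,\pi,\mu)$.

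First we write the functional equation in partial form:
\[
L^S(s,\pi,\mu)=\epsilon(s,\pi,\mu)\prod_{v\in S}\frac{L(1-s,\pi_v^\vee,\mu_v^{-1})}{\gamma(s,\pi_v,\mu_v,\psi_v)L(1-s,\pi_v^\vee,\mu_v^{-1})}\,L^S(1-s,\pi^\vee,\mu^{-1}).
\]
We use $\pi^\vee\cong\pi$ for $\Sp_{2n}$ and evaluate at $s=1$. We then split the factors over $v\in S$.

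At the archimedean place, the $\gamma$-factor for a holomorphic discrete series of weight $\lambda$ twisted by a character of parity $\mu_\infty$ is an explicit product of $\Gamma$-ratios. This gives a rational multiple of a power of $\pi\sqrt{-1}$. Counting this power accounts for the change from $(2\pi\sqrt{-1})^{\sum\lambda_j}$ at $m=0$ to the exponent $\sum\lambda_j+(n+1)$ at $m=1$. It may be convenient to choose $S_1,S_2$ in Theorem \ref{thm_main} as the paper allows.

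At a finite $v\in S$ where $\pi_v$ is ramified, we avoid computing $\gamma$ directly. Instead we enlarge $S$ or choose $S_2$ so that only the controlled places remain. The intended mechanism is the following. By the construction of $\bfh_{v,s}=M_{w,-s}f^\Sh_{v,-s}$, the local zeta integral is the $\gamma$-factor up to explicit factors. This $\gamma$-factor is also $\Aut(\bbC)$-equivariant: $\sigma(\gamma(s,\pi_v,\mu_v))=\gamma(s,{}^\sigma\pi_v,{}^\sigma\mu_v)$ at integer $s$, with Gauss-sum contributions from $\mu_v$. This equivariance is essentially what Theorem \ref{thm_main} provides.

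At $v_0$, $\pi_{v_0}$ is unramified and $\mu_{v_0}$ is ramified, so $\gamma(s,\pi_{v_0},\mu_{v_0})$ is a product of $2n+1$ Tate $\epsilon$-factors of $\mu_{v_0}\chi_i$, where the $\chi_i$ are the unramified Satake characters. Each such factor is a Gauss sum of $\mu_{v_0}$ times $\chi_i(\varpi)^{-c}$, where $c$ is the conductor exponent and $\varpi$ a uniformizer. The product of the $\chi_i$ over $\pm i$ is trivial, so only Gauss sums and a power of $q^{c}$ survive, and these have the correct Galois behaviour. The extra factor $G(\mu)D_F^{1/2}$ in the normalization at $s=1$ (exponent $n+1$ instead of $n$) is meant to absorb the global $\epsilon$-factor and these Gauss sums.

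The main obstacle is the bookkeeping of the global $\epsilon(1,\pi,\mu)$ and of the ramified-$\pi_v$ places. We must show that their product transforms under $\sigma$ exactly as $G(\mu)\times$(rational). I would first try to read this off from the zeta-integral identity for $\bfh_{v,s}$ in the body of the paper, rather than from Langlands-parameter $\epsilon$-factors. That identity expresses the ratio $L^S(1)/L^S(0)$ as a ratio of two Eisenstein-series periods whose Galois equivariance is already known.
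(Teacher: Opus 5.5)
There is a genuine gap in reflecting through $m=0$. To pass from $L^S(0,\pi,\mu)$ back to $L^S(1,\pi,\mu)$ via Theorem \ref{thm_FE_std_L}, you need $\gamma_v(0,\pi_v^\vee,\mu_v^{-1},\psi_{v,\bdelta_v^{-1}})$ to be finite, nonzero and $\Aut(\bbC)$-equivariant up to Gauss sums at \emph{every} finite $v\in S$. This includes the places where $\pi_v$ is ramified. The correct form of the equation is $L^S(s,\pi^\vee,\mu^{-1})=\prod_{v\in S}\gamma_v(s,\pi_v^\vee,\mu_v^{-1},\psi_{v,\bdelta_v^{-1}})\cdots L^S(1-s,\pi,\mu)$; your displayed version collapses to $\gamma_v^{-1}$ and carries a spurious global $\epsilon$. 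Nothing in the paper provides this control of $\gamma_v$ at ramified $\pi_v$; it is exactly the obstruction isolated in Remark \ref{rem_s=1}.

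The detour also gains nothing. The $m=0$ case of Corollary \ref{main_thm_s_0_negative} is obtained from Theorem \ref{thm_main} at $s_0'=1$ with $S_2=\emptyset$. That is, it is the equivariance of $L^S(1,\pi,\mu)\prod_{v\in S\setminus S_\infty}\gamma_v(0,\pi_v^\vee,\mu_v^{-1},\psi_{v,\bdelta_v^{-1}})\vep_v(0,\mu_v^{-1},\psi_{v,\bdelta_v^{-1}})^{-2n-1}$ (suitably normalized), pushed through this same functional equation. Reflecting back only returns this product with the same unknown $\gamma$-factors attached. Your suggested remedy does not remove them: by Lemma \ref{lem_ram_zeta_int}, the zeta integral of $\bfh_{v,s}$ at a place with ramified $\pi_v$ is, up to explicit Tate factors, precisely $\gamma_v(-s+1/2,\pi_v^\vee,\mu_v^{-1},\psi_{v,\bdelta_v^{-1}})$. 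Moving places into $S_2$ cannot help a statement about $L^S(0,\pi,\mu)$ either, since Theorem \ref{thm_main} only concerns $s_0'\geq 1$ and every place of $S$ re-enters through the functional equation.

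The missing idea is that no functional equation is needed. Apply Theorem \ref{thm_main} directly with $s_0=s_0'=1$, $S_1=\{v_0\}$ and $S_2=S\setminus(S_\infty\sqcup\{v_0\})$. At the places of $S_2$ the local section is the big-cell section $r(\delta_{\tau_v})\bff_{v,s}$. Its zeta integral (Lemma \ref{lem_zeta_int_ramified_big_cell}) involves no $\gamma$-factor, so the ramified $\pi_v$ contribute only rational volume factors. The single $\gamma$-factor in Theorem \ref{thm_main} is then the one at $v_0$. Your local computation there is correct and is exactly what is needed: the $L$-factors are trivial because $\mu_{v_0}$ is ramified, and $\vep$ is a product of $2n+1$ Tate factors whose unramified twists multiply to $1$. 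This shows that \eqref{eqn_obstruction_s=1} equals $1$, for $\pi,\mu$ and for their $\sigma$-conjugates alike.

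So the role of $v_0$ is not to enable a reflection but to satisfy the standing assumption $S_1\neq\emptyset$ of \S\ref{subsec_choice_global_section} when $s_0'=1$, $F=\bbQ$, $\mu^2=\mathbf{1}$. Lemma \ref{lem_holomorphy_Eis_ser} uses that $\bfh_{v_0,1/2}$ lies in the kernel of $M^*_{v_0,w_{2n},1/2}$ to show, via the Kudla--Rallis/Siegel--Weil argument, that the Eisenstein series is holomorphic and nearly holomorphic at $s=1/2$. That is what makes the rationality argument of Proposition \ref{prop_rationality_Eis_ser} available at $s=1$.
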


We expect the same strategy to work for other groups, such as unitary groups.
More generally, our construction of sections may provide a way to study arithmeticity at critical points where the Eisenstein series arising from the standard choice of sections is not nearly holomorphic.
We also hope to pursue further number theoretic applications of our methods.

\subsection*{Acknowledgements}
The authors are grateful to Tamotsu Ikeda and Hidenori Katsurada for their valuable comments and encouragement. 
The authors would also like to thank Nobuki Takeda for sharing relevant references on the computations of zeta integrals.
The first named author is partially supported by JSPS KAKENHI Grant Number 23K12965 and JST CREST JPMJCR2113, Japan. The second named author is partially supported by JSPS KAKENHI Grant Number 23KJ0001.

The authors used ChatGPT (OpenAI) and Claude (Anthropic) during the preparation of this manuscript for language and expository editing.
All mathematical statements and proofs were independently verified by the authors, who take full responsibility for the content of this paper.

\section{Preliminaries}
%In this section, we fix some notation that will be used later.
\subsection{Basic notation and definitions}

    For a nonarchimedean local field $K$, let $\calO_K$ be the ring of integers of $K$ with maximal ideal $\frakp_K$, and let $q_K$ denote the order of the residue field $\calO_K/\frakp_K$.
    Fix a uniformizer $\varpi_K$ of $\calO_K$.
    We denote by $|\cdot|_K$ the absolute value normalized by $|\varpi_K|_K = q^{-1}_K$.
    Also, let $|\cdot|_K$ denote the absolute value when $K$ is archimedean.
    For a character $\chi$ of $K^\times$, let $c(\chi)$ be the conductor exponent of $\chi$.
    Throughout this paper, we fix an additive character $\psi_\bbQ = \otimes_v\psi_v$ of $\bbQ \bs \bbA$ such that $\psi_\infty(x) = \exp(2\pi\sqrt{-1}\,x)$ and $\psi_v$ is of order $0$ for any nonarchimedean place $v$.
    
    Let $F$ be a totally real field with $[F \colon \bbQ] = d$.
    We denote by $\bbA_F$ the adele ring of $F$ and by $\bbA_{F,\fini}$ the finite part.
    Let $F_\infty = \prod_{v \mid \infty} F_v$ and $F_{\infty,+}$ be the set of totally positive elements in $F_\infty$.
    For a nonarchimedean place $v$ of $F$, the symbols $\calO_v, \frakp_v, \varpi_v, q_v$, etc., denote the corresponding objects for $F_v$. 
    By $|\cdot|_F$, we mean the idele norm of $\bbA_F^\times$.
    Put $\psi_F = \psi_\bbQ \circ \tr_{F/\bbQ}$.
    The additive character $\psi_v$ stands for the $v$-component of $\psi_F$ for a place $v$ of $F$.
    Let $\mathbf{\bdelta}_v$ be an element of $F_v^\times$ such that $\bdelta_v\calO_{v}$ is the different $\frakd_v$ of $F_v$. 
    For $\lambda \in F_v^\times$, define $\psi_{v,\lambda}$ by $\psi_{v,\lambda}(x) = \psi_{v
    }(\lambda x)$.
    The additive character $\psi_{v, \bdelta_v^{-1}}$ is of order $0$.
    For $\theta \in F_v^\times$, let $\langle,\rangle_v$ denote the Hilbert symbol and $\chi_\theta(x) = \langle x,\theta \rangle_v$ be a quadratic character.
    By $\mathbf{1}$, we mean the trivial character.
    Throughout this paper, all representations are assumed to be admissible.
    Let $r$ be the right translation.
    For $B \in \Sym_n$, put $D_B = (-4)^{\lfloor n/2 \rfloor} \det(B)$.
    
    For a positive integer $n$, let $x_n$ be the scalar $n \times n$ matrix with the entry $x$.
    Let $R$ be a ring.
    By $\GL_n$ and $\Sp_{2n}$, we mean algebraic groups over $\bbZ$ whose $R$-rational points are given by
    \[
    \GL_n(R) = \{g \in \Mat_n(R) \mid \det(g) \in R^\times\}
    \]
    and
    \[
    \Sp_{2n}(R) = \left\{g \in \Mat_{2n}(R) \,\middle|\, {^tg}\begin{pmatrix}
        0_n & -1_n\\
        1_n & 0_n
    \end{pmatrix}g = \begin{pmatrix}
        0_n & -1_n\\
        1_n & 0_n
    \end{pmatrix}\right\},
    \]
    respectively.
    The symplectic group $\Sp_{2n}$ admits a Siegel parabolic subgroup $P_n$ defined by
    \[
    P_n = 
    \left\{
    g=
    \begin{pmatrix}
        A&B\\
        C&D
    \end{pmatrix}
    \in \Sp_{2n} \, \middle|\,
    C = 0_n
    \right\}.
    \]
    Let $P_n = M_nN_n$ be the Levi decomposition and
    \[
    \bfn(x) =
    \begin{pmatrix}
        1_n & x\\
        0_n & 1_n
    \end{pmatrix}
    \in N_n
    \]
    for $x \in \Sym_n$.
    We define a character $\Delta$ of $P_n$ by
    \[
    \Delta
    \left(
    \begin{pmatrix}
        A&B\\
        0_n&D
    \end{pmatrix}\right)
    =
    \det(A).
    \]
    Throughout this paper, we write $G_n = \Sp_{2n}$.

    For $G_n(\bbR)$, we fix the maximal compact subgroup $K_\infty$ by
    \[
    K_\infty = 
    \left\{
    \begin{pmatrix}
        A & B \\
        -B & A
    \end{pmatrix}
    \in 
    G_{n}(\bbR)
    \,\middle|\,
    {^tAA} + {^tBB} = 1_n
    \right\},
    \]
    which is isomorphic to $\mathrm{U}(n)$.
    For a nonarchimedean place $v$, put $K_v = G_{n}(\calO_{v})$. 
    We also use the following open compact subgroups $K_v(\frakp_{v}^m)$ of $G_n(F_v)$ defined by
    \[
    K_v(\frakp_{v}^m)
    =
    \left\{
    g \in G_n(\calO_{v})\mid g \equiv 1_{2n} \bmod \frakp_{v}^m\right\}
    \]
    for a nonnegative integer $m$.
    Put $K_\bbA = \prod_v K_v$.
    When the field $F$ is clear from the context, we denote the maximal compact subgroups $K_v$ and $K_\bbA$ by $K$ for short.

\subsection{Lie algebras, root systems, and root vectors}

    Let $\frakg_n$ be the complexified Lie algebra $\Lie(G_n(\bbR)) \otimes \bbC$ and $\frakk_n$ be the complexified Lie algebra of $K_\infty$ for an archimedean place $\infty$.
    The maximal compact subgroup $K_\infty$ defines a Cartan involution and the $(-1)$-eigenspace $\frakp_n$.
    The symmetric space $G_n(\bbR)/K_\infty$ is identified with the Siegel upper half space
    \[
    \frakH_n = \{z \in \Sym_n(\bbC) \mid \mathrm{Im}(z)>0\}.
    \]
    The complex structure of $\frakH_n$ yields the decomposition $\frakp_{n} = \frakp_{n,+} \oplus \frakp_{n,-}$.
    Then, $\frakp_{n,\pm}$ are commutative and isomorphic to $\Sym_n(\bbC)$ by the isomorphisms $\iota_\pm \colon \Sym_n(\bbC) \rightarrow \frakp_{n,\pm}$
    \[
    \iota_+(u) = \frac{1}{2} \beta \begin{pmatrix}
        0_n & \sqrt{-1}\,u\\
        0_n&0_n
    \end{pmatrix}\beta^{-1}, \
    \iota_-(u) = \frac{1}{2} \beta \begin{pmatrix}
        0_n & 0_n\\
        -\sqrt{-1}\cdot{^t u}&0_n
    \end{pmatrix}\beta^{-1}, \ \beta=
    \begin{pmatrix}
        -\sqrt{-1}\,1_n & \sqrt{-1}\,1_n\\
        1_n&1_n
    \end{pmatrix}.
    \]
    When the subscript $n$ is clear from the context, we write $\frakp_{n,\pm}$ as $\frakp_{\pm}$.
    For archimedean places $v$ of a totally real field $F$, let $\frakp_{\pm, v}$ be the corresponding Lie subalgebra of $\Lie(G_n(F_v)) \otimes \bbC$.

    Fix a fundamental Cartan subalgebra $\frakt_n$ contained in $\frakk_n$.
    The root systems are given by
    \[
    \Delta(\frakg_n, \frakt_n) = \{\pm(e_i - e_j) \mid 1 \leq i < j \leq n\} \cup \{\pm(e_i + e_j) \mid 1 \leq i \leq j \leq n\}
    \]
    and 
    \[
    \Delta(\frakk_n, \frakt_n) = \{\pm(e_i - e_j) \mid 1 \leq i < j \leq n\}, \qquad \Delta(\frakp_{n,\pm}, \frakt_n) = \{\pm(e_i + e_j) \mid 1 \leq i \leq j \leq n\}.
    \]
    We define the root vectors $X_{i,j}^\pm$ in $\frakp_{n,\pm}$ by $X_{i,j}^\pm = \iota_\pm(e_{i,j})$.
    Here, $e_{i,j}$ is the symmetric matrix whose $(i,j)$- and $(j,i)$-th entries are $1$ and whose other entries are $0$.

\subsection{Measures}

The Tamagawa measure $dx^\Tam$ on the adele ring $\bbA_F$ is the unique Haar measure that satisfies $\vol(F \bs \bbA_F) = 1$.
For a place $v$ of $F$, let $dx_v$ be the measure on $F_v$ so that $\vol(\calO_{v}) = 1$ when $v$ is nonarchimedean, and the Lebesgue measure when $v$ is archimedean.
The two measures are related by
\[
dx^\Tam = D_F^{-1/2} \prod_v dx_v.
\]
For $\Sym_n(F_v)$ (resp.\, $\Sym_n(F)\bs \Sym_n(\bbA_F)$), we define the measure by $\prod_{1 \leq i \leq j \leq n} dx_{v,i,j}$ (resp.\, the Tamagawa measure $d x^\Tam$), where $x_{v,i,j}$ is the usual coordinate and $dx_{v,i,j}$ is the measure on $F_v$ defined above.
We then have $dx^\Tam = D_F^{-n(n+1)/4} \cdot \prod_{v,1 \leq i \leq j \leq n} dx_{v,i,j}$.
Let $dg^\Tam$ be the Tamagawa measure on $G_{n}(\bbA_F)$.
For a nonarchimedean place $v$ of $F$, let $dg_v$ be the Haar measure so that $\vol(G_{n}(\calO_{v})) = 1$.
For an archimedean place $v$, let $dg_v$ be the measure defined in \cite{2021_Zheng_Liu_arch_zeta_int}, namely the product measure of the Haar measure on $K_\infty$ normalized so that $\vol(K_\infty) = 1$ and the measure on $G_n(\bbR)/K_\infty = \frakH_n$ defined by an invariant measure $\det((y_{i,j})_{i,j})^{-n-1} \prod_{1 \leq i \leq j \leq n} dx_{i,j}dy_{i,j}$.
Since the Tamagawa number in $G_{n}$ is $1$, the computation of \cite{61_Klingen} shows
\begin{equation}\label{Tamagawa_measure_comparison}
dg^\Tam = \frac{D_F^{-n(2n+1)/2}\pi^{dn(n+1)/2}}
{\prod_{j=1}^{n}\Gamma(j)^d\zeta_F(2j)} \prod_v dg_v,
\end{equation}
where $\zeta_F(s)$ is the Dedekind zeta function of $F$.
    
\section{Siegel modular forms and automorphic forms on \texorpdfstring{$G_n(\bbA_F)$}{}}\label{section_rationality_autom_form}
    %In this section, we review basic properties and the rationality of nearly holomorphic Hilbert-Siegel modular forms following \cite{HPSS, 00_Shimura}.
\subsection{Differential operators}
    We first recall the differential operators and the definition of nearly holomorphic functions on the Siegel upper half space $\frakH_n$.
    Let $(\rho,V)$ be an irreducible finite dimensional representation of $\GL_n(\bbC)$ and $T = \Sym_n(\bbC)$.
    For a nonnegative integer $e$, consider the space $\Ml_e(T,V)$ of $e$-multilinear forms from $T$ to $V$ and the subspace $S_e(T,V)$ consisting of elements in $\Ml_e(T,V)$ invariant under the permutations of indices.
    For $h \in \Ml_e(T,V), (u_1,\ldots,u_e) \in T^e$ and $a \in \GL_n(\bbC)$, define the representations $\rho \otimes \tau^e$ and $\rho \otimes \sigma^e$ on $\Ml_e(T,V)$ by
    \begin{align*}
        ((\rho \otimes \tau^e)(a)h)(u_1,\ldots,u_e)
        &=\rho(a)h({^t}au_1a,\ldots,{^t}au_ea),\\
        ((\rho \otimes \sigma^e)(a)h)(u_1,\ldots,u_e)
        &=\rho(a)h(a^{-1}u_1 \cdot {^t}a^{-1},\ldots,a^{-1}u_e \cdot{^t}a^{-1}).
    \end{align*}
    We will denote the restriction of $\rho \otimes \tau^e$ and $\rho \otimes \sigma^e$ to $S_e(T,V)$ by the same symbol.

    By a rational structure of $V$, we mean a $\bbQ$-vector subspace $V_\bbQ$ of $V$ such that $V_\bbQ \otimes \bbC = V$ and $V_\bbQ$ is stable under the action of $\GL_n(\bbQ)$.
    Fix rational structures $T_\bbQ = \Sym_n(\bbQ)$ on $T$ and $V_\bbQ$ on $V$.
    These rational structures induce rational structures on $\Ml_e(T,V)$ and $S_e(T,V)$.
    For explicit descriptions, see \cite[\S 3.3]{HPSS}.
    Take a basis $\{\vep_\nu\}_\nu$ of $T_\bbQ$.
    For $u \in T$, write $u = \sum_\nu u_\nu \vep_\nu$, and for $z \in \frakH_n$, write $z = \sum_\nu z_\nu \vep_\nu$.
    
    We define the differential operators $D,\overline{D},C,E \colon C^\infty(\frakH_n,V) \rightarrow C^\infty(\frakH_n,S_1(T,V))$ by
    \begin{align*}
        ((Df)(z))(u) = \sum_{\nu} u_\nu \frac{\partial f}{\partial z_\nu}(z), \qquad &((\overline{D}f)(z))(u) = \sum_{\nu} u_\nu \frac{\partial f}{\partial \overline{z_\nu}}(z)\\
        ((Cf)(z))(u) = 4((Df)(z))(yuy), \qquad &((Ef)(z))(u) = 4((\overline{D}f)(z))(yuy).
    \end{align*}
    Here, $u = (u_\nu)_\nu \in T, z = (z_\nu)_\nu \in \frakH_n$ and $y = \mathrm{Im}(z)$.
    We define $D_\rho^ef \in C^\infty(\frakH_n, S_e(T,V))$ by 
    \[
    D_\rho^ef(z) = (\rho \otimes \tau^e)(2y)^{-1} (C^e(\rho(2y)f))(z).
    \]

    Hilbert-Siegel modular forms are functions on $\prod_{v \mid \infty} \frakH_n$.
    By $D_v, \overline{D_v}, C_v, E_v$ and $D_{v,\rho_v}$, we mean the differential operators defined on the $v$-component of $\prod_{v \mid \infty} \frakH_n$.

\subsection{Nearly holomorphic Hilbert-Siegel modular forms}
    
    Let $(\rho, V)$ be a finite-dimensional representation of $\GL_n(\bbC)$.
    For an integer $e \in \bbZ_{\geq 0}$, let $N^e(\frakH_n)$ be the space of polynomials of degree at most $e$ in the entries of $y^{-1}$ with coefficients in holomorphic functions $\frakH_n \rightarrow \bbC$.
    Set $N_\rho^e(\frakH_n) = N^e(\frakH_n) \otimes V$.
    We define 
    \[
    N_\rho(\frakH_n) = \bigcup_e N_\rho^e(\frakH_n)
    \]
    to be the space of nearly holomorphic functions on $\frakH_n$.
    Equivalently, $f$ lies in $N_\rho^e(\frakH_n)$ if and only if $E^{e+1}f = 0$ by \cite[\S 13.11]{00_Shimura}.
    A typical example is the weight two elliptic Eisenstein series, whose Fourier expansion is given by
    \[
    E_2(z) = \frac{3}{\pi y}- 1 + 24 \sum_{n=1}^\infty\left(\sum_{0<d \mid n} d\right) e^{2\pi \sqrt{-1}\, nz}, \qquad z \in \frakH_1.
    \]

    Let $\infty_1,\ldots,\infty_d$ be the archimedean places of $F$.
    For a congruence subgroup $\Gamma$ of $G_n(F)$, we regard $\Gamma$ as a subgroup of $G_n(\bbR)^d$ by $\gamma \mapsto (\infty_1(\gamma),\ldots,\infty_d(\gamma))$.
    Let $\rho$ be a finite dimensional representation of $\GL_n(\bbC)^d$.
    For $g = (g_1,\ldots,g_d) \in G_n(\bbR)^d$ and $z = (z_1,\ldots,z_d) \in \frakH^d_n$, the slash operator $|_\rho g$ is the automorphism of $C^\infty(\frakH_n^d,V)$ defined by
    \[
    (f|_{\rho}g)(z_1,\ldots,z_d) = \rho((j(g_i,z_i))_{i=1,\ldots,d})^{-1} f(g_1(z_1),\ldots,g_d(z_d)), \qquad 
    j\left(\abcd,z\right) = cz+d.
    \]
    By \cite[Proposition 12.10]{00_Shimura}, we have
    \[
    (D_{\rho}^ef)|_{\rho \otimes \tau^e} \gamma = D_{\rho \otimes \tau^e}(f|_{\rho} \gamma).
    \]
    We then say that a function $f \in C^\infty(\frakH_n^d, V)$ is a nearly holomorphic Hilbert-Siegel modular form of weight $\rho$ with respect to $\Gamma$ if $f$ satisfies the following three conditions:
    \begin{itemize}
        \item $f$ is nearly holomorphic;
        \item $f|_\rho\gamma = f$ for any $\gamma \in \Gamma$;
        \item $f$ satisfies the cusp condition.
    \end{itemize}
    Note that when $dn \geq 2$, the cusp condition is automatically satisfied by the Koecher principle.
    We denote by $N_\rho(\Gamma)$ the space of nearly holomorphic modular forms of weight $\rho$ with respect to $\Gamma$.
    The automorphy $f|_\rho \gamma = f$ yields the Fourier expansion of $f$
    \[
    f(z_1,\ldots,z_d) = \sum_{h \in \Sym_n(F)} c_h(y) \exp\left(2\pi \sqrt{-1}\, \sum_i\tr(\infty_i(h) \cdot z_i)\right), \qquad (z_1,\ldots,z_d) \in \frakH_n^d.
    \]
    Here, $c_h(y)$ is a polynomial in the entries of $y^{-1}$ with coefficients in $\bbC$.

    We consider nearly holomorphic automorphic forms.
    For each archimedean place $\infty_i$ of $F$, let $\frakp_{-, \infty_i}$ denote the Lie subalgebra of $\Lie(G_n(F_{\infty_i})) \otimes \bbC$.
    We say that an automorphic form $\varphi$ on $G_n(\bbA_F)$ is nearly holomorphic if $\varphi$ is $\frakp_{-,\infty_i}$-finite for any $i$.
    Let $\calN(G_n)$ be the space of nearly holomorphic automorphic forms on $G_n(\bbA_F)$.
    For an open compact subgroup $K_\fini$ of $G_n(\bbA_{F,\fini})$ and an irreducible finite dimensional representation $\rho = \boxtimes_{v \mid \infty}\rho_v$ of $\prod_{v \mid \infty} K_v$, let $\calN(G_n)_\rho^{K_\fini}$ be the space of $K_\fini$-fixed nearly holomorphic automorphic forms that generate $\rho$ as a representation of $\prod_{v \mid \infty} K_v$ under right translation $r$.
    When we denote the holomorphic extension of $\rho$ to $\GL_n(\bbC)^d$ by the same letter $\rho$, we obtain the canonical isomorphism
    \begin{align}\label{isom_NH_MF_AF}
    N_\rho(\Gamma) \otimes \rho^* \isom \calN(G_n)_\rho^{K_\Gamma},
    \end{align}
    where $K_\Gamma$ is the closure of $\Gamma$ in $G_n(\bbA_{F,\fini})$ under the diagonal embedding.
    
\subsection{Rationality of nearly holomorphic modular forms}

    Recall the Galois action on nearly holomorphic modular forms.
    Let $(\rho,V) = (\boxtimes_v\rho_v,\otimes_v V_v)$ be an irreducible representation of $\prod_{v\mid \infty} K_v$ with a rational structure $V_\bbQ = \otimes_v V_{v,\bbQ}$.
    Nearly holomorphic modular forms $f$ of weight $\rho$ can be expressed as
    \[
    f(z) = \sum_{h} p_h(\pi^{-1}\sqrt{-1}\ y_1^{-1},\ldots,\pi^{-1}\sqrt{-1}\ y_d^{-1}) \exp\left(2\pi\sqrt{-1}\, \sum_i \tr(\infty_i(h) \cdot z_i)\right).
    \]
    Here, $p_h$ lies in $\bigcup_e S_e(T,V) = \bigcup_e S_e(T,\otimes_{v\mid \infty} V_v)$.
    The subscript $i$ of $y_i^{-1}$ corresponds to the archimedean place $\infty_i$ of $F$.
    For an element $\sigma \in \Aut(\bbC)$, define $\sigma(i)$ by $\infty_{\sigma(i)} = \sigma \circ \infty_i$.
    By applying $\sigma$ to $S_e(T,V) = S_e(T_\bbQ, V_\bbQ) \otimes \bbC$, one obtains a polynomial ${^\sigma p_h}$.
    Set
    \[
    {^\sigma f}(z) = \sum_{h} {^\sigma p_h}(\pi^{-1}\sqrt{-1}\ y_{\sigma(1)}^{-1},\ldots,\pi^{-1}\sqrt{-1}\ y_{\sigma(d)}^{-1}) 
    \exp\left(2\pi\sqrt{-1}\, \sum_i \tr(\infty_i(h) \cdot z_i)\right)
    \]
    and
    \[
    {^\sigma \rho}((g_v)_v)(w_v)_v = (\rho_{v}(g_{\sigma \circ v})w_{v})_v
    \]
    for $(g_v)_v \in \prod_{v \mid \infty} K_v$ and $w_v \in V_v$.
    Note that the representation spaces of $\rho$ and ${^\sigma \rho}$ are the same.
    By \cite[Theorem 14.12 (1)]{00_Shimura}, the formal series ${^\sigma f}$ is a nearly holomorphic modular form in $N_{{^\sigma \rho}}$.
%    This coincides with the Galois action on the nearly holomorphic modular forms in \cite[\S 14.11]{00_Shimura}.
    For the representations $\rho \otimes \tau^e$ and $\rho \otimes \sigma^e$ on $S_e(T,V)$ of $\prod_{v \mid \infty}\GL_n(\bbC)$, we fix a rational structure by $S_e(\Sym_n(\bbQ), V_\bbQ)$.
    The Galois action commutes with the differential operators
    \begin{align}\label{relation_Gal_diff_op}
     {^\sigma\left(\frac{1}{\pi\sqrt{-1}}D_{v,\rho_v} f\right)} = \frac{1}{\pi\sqrt{-1}}D_{\sigma \circ v, {^\sigma\rho_v}} ({^\sigma f}), \qquad  {^\sigma\left(\pi \sqrt{-1}\ E_v f\right)} = \pi\sqrt{-1} \cdot E_{\sigma \circ v} ({^\sigma f})
    \end{align}
    as in the proof of \cite[Theorem 14.12 (3)]{00_Shimura}.

    We have defined the map $N_\rho \rightarrow N_{{^\sigma \rho}}$ associated with $\sigma \in \Aut(\bbC)$.
    For a rational structure $V_\bbQ$ of $\rho$, the dual $\rho^*$ is equipped with the rational structure $\Hom_\bbQ(V_\bbQ, \bbQ)$.
    Let $\sigma$ denote the isomorphism between $N_\rho \otimes \rho^*$ and $N_{{^\sigma \rho}} \otimes {^\sigma\rho}^{*}$ with respect to the rational structure $V_\bbQ \otimes \Hom_\bbQ(V_\bbQ, \bbQ)$ of $\rho \otimes \rho^*$.
    By the isomorphism \eqref{isom_NH_MF_AF}, one obtains an isomorphism $\sigma \colon \calN(G_n)_\rho \isom \calN(G_n)_{{^\sigma \rho}}$ such that the following diagram commutes:
    \begin{align}\label{def_Gal_act_AF}
  \begin{CD}
     N_\rho \otimes \rho^* @>{\sigma}>> N_{{^\sigma \rho}} \otimes {^\sigma\rho^{*}} \\
  @V{\eqref{isom_NH_MF_AF}}VV    @V{\eqref{isom_NH_MF_AF}}VV \\
     \calN(G_n)_\rho   @>{\sigma}>>  \calN(G_n)_{{^\sigma \rho}}
  \end{CD}
    \end{align} 
    
    \begin{lem}
        Let $(\rho, V)$ be an irreducible representation of $\prod_{v \mid \infty} K_v$.
        The Galois action defined in \eqref{def_Gal_act_AF} is independent of the choice of a rational structure of $V$.
    \end{lem}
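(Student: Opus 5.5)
The plan is to make the isomorphism \eqref{isom_NH_MF_AF} explicit and read off that changing $V_\bbQ$ does not change the composite action on $\calN(G_n)_\rho$. Concretely, \eqref{isom_NH_MF_AF} sends $f \otimes \ell \in N_\rho(\Gamma)\otimes\rho^*$ to the automorphic form $\varphi_{f,\ell}(g) = \ell\bigl((f|_\rho g_\infty)(\mathbf{i})\bigr)$ for $g \in G_n(F)\,g_\infty K_\Gamma$, extended in the usual way. The key structural fact is that $N_\rho\otimes\rho^*$ is a tensor product over the endomorphisms of $V$, and the isomorphism \eqref{isom_NH_MF_AF} is invariant under $f\otimes \ell \mapsto Af \otimes \ell\circ A^{-1}$ for $A \in \GL(V)$ commuting with $\rho$. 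Since $\rho$ is irreducible, such $A$ are scalars, and this is what will absorb the choice.

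First step: classify rational structures. If $V_\bbQ$ and $V'_\bbQ$ are two $\GL_n(\bbQ)^d$-stable (equivalently, by Zariski density, stable under the algebraic group) $\bbQ$-forms of the irreducible algebraic representation $\rho$, I claim $V'_\bbQ = c\,V_\bbQ$ for some $c \in \bbC^\times$. To see this, pick a highest weight vector $w \in V_\bbQ$ for the split torus. The highest weight line of $V'_\bbQ$ is a rational line inside the same one-dimensional complex weight space, so it equals $c\,\bbQ w$. Since $V_\bbQ$ is generated by $w$ under the distribution algebra over $\bbQ$ (as is $V'_\bbQ$ by $cw$), we get $V'_\bbQ = cV_\bbQ$.

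Second step: compare the actions. The induced structures on $S_e(T,V)$ scale by $c$ (as $T_\bbQ$ is fixed), and on $\rho^*$ by $c^{-1}$. Let $\sigma$ and $\sigma'$ denote the semilinear maps on $V$ fixing $V_\bbQ$ and $V'_\bbQ$, respectively. Then $\sigma'(cv) = c\,\sigma(v)$, i.e. $\sigma' = c\,\sigma(c)^{-1}\sigma$ on $V$. The same factor appears on each $S_e(T,V)$, hence on nearly holomorphic forms via the Fourier coefficients $p_h$. On $\rho^*$ the factor is $c^{-1}\sigma(c)$. So the action on $N_\rho\otimes\rho^*$ is unchanged, the scalars cancel, and therefore so is the action on $\calN(G_n)_\rho$ via \eqref{def_Gal_act_AF}.

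Main obstacle: the claim in the first step, because $\rho$ is a representation of $\prod_v K_v$ and rationality is measured with respect to $\GL_n(\bbQ)$ acting through the holomorphic extension, while the tensor factors are permuted by $\sigma$. I would handle this factorwise. Any rational structure on $\otimes_v V_v$ stable under $\GL_n(\bbQ)^d$ still has a one-dimensional highest weight space for the split torus $\prod_v$ diagonal, so the scaling argument applies to the whole tensor product at once. The only remaining point is that ${^\sigma\rho}$ uses the same space, so both rational structures are being compared on the same $V$; this is what makes the cancellation legitimate.
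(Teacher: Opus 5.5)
Your proof is correct, and it follows the same two-step skeleton as the paper. First, a $\GL_n(\bbQ)^d$-stable rational structure on $V$ is unique up to a scalar $c\in\bbC^\times$. Second, that scalar cancels between $N_\rho$ and $\rho^*$.

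The difference lies in how the first step is proved.

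\textbf{The paper's route (Schur's lemma).} It identifies $V_\bbQ$ with $V'_\bbQ$ and extends this to an element of $\End_{\GL_n(\bbQ)^d}(V)=\End_{\GL_n(\bbC)^d}(V)=\bbC\cdot\mathrm{Id}_V$, using Zariski density. This is shorter. However, to produce that endomorphism it tacitly needs a $\GL_n(\bbQ)^d$-equivariant $\bbQ$-isomorphism $V_\bbQ\cong V'_\bbQ$. That isomorphism exists because $\Hom_{\GL_n(\bbQ)^d}(V_\bbQ,V'_\bbQ)\otimes_\bbQ\bbC=\End_{\GL_n(\bbC)^d}(V)$ is one-dimensional.

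\textbf{Your route (highest weight).} It never constructs such an isomorphism and instead locates $c$ directly. The cost is two facts you should state explicitly:
\begin{itemize}
\item The weight spaces of the split torus are defined over $\bbQ$. Without this, you do not know that $V_\bbQ$ contains a highest weight vector.
\item $V_\bbQ$ is stable under $\calU(\frakg_\bbQ)$. For instance, for nilpotent $X$, $\rho(\exp(tX))$ is a polynomial in $t\in\bbQ$, and interpolation gives $\rho(X)V_\bbQ\subseteq V_\bbQ$.
\end{itemize}
Once these are in place, $\calU(\frakg_\bbQ)w$ is a $\bbQ$-subspace of $V_\bbQ$ whose $\bbC$-span is $V$. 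A dimension count then shows it equals $V_\bbQ$, and likewise $V'_\bbQ=\calU(\frakg_\bbQ)(cw)=cV_\bbQ$.

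Your explicit computation also fills in the paper's ``Hence'', where the cancellation is left implicit. The new Galois map is $c\,\sigma(c)^{-1}\sigma$ on $S_e(T,V)$ and $c^{-1}\sigma(c)\,\sigma$ on $\rho^*$, so the two scalars cancel on $N_\rho\otimes\rho^*$.
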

    \begin{proof}
        Let $V_\bbQ$ and $V_\bbQ'$ be rational structures of $V$.
        As representations of $\prod_{v \mid \infty}\GL_n(\bbQ)$, $V_\bbQ$ decomposes as a direct sum of irreducible representations.
        By the isomorphisms $V_\bbQ \otimes \bbC \isom V$ and $V_\bbQ' \otimes \bbC \isom V$, the rational structures $V_\bbQ$ and $V_\bbQ'$ are irreducible and define an element $f \in \End_{\prod_{v \mid \infty}\GL_n(\bbQ)}(V) = \End_{\prod_{v \mid \infty}\GL_n(\bbC)}(V) = \bbC \cdot \mathrm{Id}_V$.
        In particular, the rational structure of $V$ is unique up to a nonzero constant.
        Hence, the map $\sigma \colon N_\rho \otimes \rho^* \rightarrow N_{{^\sigma \rho}} \otimes {^\sigma \rho^*}$ is independent of the choice of rational structures.
        By definition, $\calN(G_n)_{\rho} \rightarrow \calN(G_n)_{{^\sigma \rho}}$ is also independent of the choice of the rational structures.
    \end{proof}

    We next consider the relationship between the rational structure on $\calN(G_n)$ and the action of the Lie algebras $\frakp_{\pm,v}$.

    \begin{lem}\label{lem_arith_AF}
        For root vectors $X_{v,i,j}^+ \in \frakp_{+,v}$ and $X_{v,i,j}^- \in \frakp_{-,v}$, we have
        \[
        {^\sigma\left((\pi\sqrt{-1})^{\mp 1} X_{v,i,j}^\pm \cdot \varphi \right)} = (\pi\sqrt{-1})^{\mp1}X_{\sigma \circ v,i,j}^\pm \cdot {^\sigma \varphi}
        \]
        for $\varphi \in \calN(G_n)$.
    \end{lem}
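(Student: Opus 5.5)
The plan is to transport the statement to classical nearly holomorphic modular forms through the isomorphism \eqref{isom_NH_MF_AF} and then use the Galois equivariance \eqref{relation_Gal_diff_op} of Shimura's differential operators. Since $\frakp_{\pm,v}$ is abelian and $\varphi$ is a finite sum of $K_\infty$-isotypic pieces, linearity lets us reduce to $\varphi \in \calN(G_n)^{K_\fini}_\rho$ with $\rho$ irreducible. Under \eqref{isom_NH_MF_AF} write $\varphi \leftrightarrow \sum_k f_k \otimes \ell_k$ with $f_k \in N_\rho(\Gamma)$, $\ell_k \in \rho^*$, taken rational with respect to $V_\bbQ$. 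Then $\varphi(g) = \ell\bigl(\rho(J(g_\infty,\mathbf{i}))^{-1} f(g_\infty \mathbf{i})\bigr)$ in the usual way.

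\textbf{Step 1 (computing the Lie algebra action).} For $X^+ = \sum_{i\le j} u_{ij}X^+_{v,i,j}$, with $u\in\Sym_n(\bbC)$ the matrix of coefficients, the standard computation (Shimura, \cite[\S12--13]{00_Shimura}) identifies $\frakp_{+,v}$ with the raising operator $D_{v,\rho_v}$ and $\frakp_{-,v}$ with the lowering operator $E_v$. Concretely, $X^+\cdot\varphi$ corresponds to $c_+ \cdot (D_{v,\rho_v} f)(u)$ paired against an element of $(\rho\otimes\tau)^*$. Likewise $X^-\cdot\varphi$ corresponds to $c_-\cdot (E_v f)(u)$ paired against an element of $(\rho\otimes\sigma)^*$. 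Here $c_\pm$ are constants coming from the normalization of $\iota_\pm$ and $\beta$. The vector $u$ enters only through evaluation of the multilinear form, so it must be an element of $T_\bbQ$ up to the fixed conventions. I expect $c_\pm \in \bbQ(\sqrt{-1})$, possibly with a power of $2$ and of $\sqrt{-1}$, and crucially with no factor of $\pi$. This gives maps $\calN(G_n)_\rho \to \calN(G_n)_{\rho\otimes\tau}$ or $\calN(G_n)_{\rho\otimes\sigma}$, composed with projection to irreducible constituents.

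\textbf{Step 2 (Galois compatibility).} Apply $\sigma$ using the diagram \eqref{def_Gal_act_AF}, and use that $e_{i,j}\in T_\bbQ$. By \eqref{relation_Gal_diff_op}, ${}^\sigma((\pi\sqrt{-1})^{-1}D_{v,\rho_v}f) = (\pi\sqrt{-1})^{-1}D_{\sigma\circ v,{}^\sigma\rho_v}({}^\sigma f)$, and the analogous identity holds for $\pi\sqrt{-1}E_v$. Evaluating the multilinear form at the rational vector $e_{i,j}$ commutes with $\sigma$. The dual pairing is also $\sigma$-equivariant for the rational structures $S_1(T_\bbQ,V_\bbQ)$ and their duals. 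Projection to irreducible constituents is defined over $\bbQ$, because the decomposition of $\rho\otimes\tau$ is defined over $\bbQ$. Together these give the claimed identity, provided the constants satisfy $\sigma(c_\pm)=c_\pm$ or are compensated appropriately.

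\textbf{Main obstacle.} The delicate point is Step 1: pinning down $c_\pm$ exactly and checking that they are fixed by $\sigma$. This includes checking that any stray $\sqrt{-1}$, which is sent to $\pm\sqrt{-1}$, cancels against the $\sqrt{-1}$ in $(\pi\sqrt{-1})^{\mp1}$. One must also check that the factor $\rho(2y)$ in the definition of $D^e_\rho$ matches the automorphy factor at $g_\infty$, so that no transcendental factor appears. I would first verify everything for $n=1$, where $X^\pm$ correspond to the Maass operators $\frac{1}{2}\bigl(k/y + 2\sqrt{-1}\,\partial_z\bigr)$-type expressions. Then I would extend to general $n$ through the $\GL_n$-equivariance of $\iota_\pm$, invoking \cite[Theorem 14.12]{00_Shimura} for the rationality of the resulting forms.
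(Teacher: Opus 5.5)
\textbf{Your route is the paper's route, but the one step with real content is left open.} The paper's proof proceeds as you plan. It reduces to $\varphi=\varphi(f,x^*)$ with $\rho$ irreducible and $x^*$ rational. It identifies $X^{+}_{v,i,j}\varphi$ with $D_{v,\rho_v}f$, and $X^-_{v,i,j}\varphi$ with $E_vf$, each paired against $e^*_{v,i,j}\otimes x^*$. It then applies \eqref{relation_Gal_diff_op}. You leave undetermined exactly the constants $c_\pm$ relating these two sides, and that identification is the content of the lemma. The Galois action is $\sigma$-semilinear, so once $X^\pm\varphi\leftrightarrow c_\pm\cdot(\ldots)$, the claimed identity holds if and only if $\sigma(c_\pm)=c_\pm$, i.e.\ $c_\pm\in\bbQ$.

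Your suggestion that a stray $\sqrt{-1}$ in $c_\pm$ could cancel against $(\pi\sqrt{-1})^{\mp1}$ does not work. That factor appears identically on both sides of the statement and is already used up by \eqref{relation_Gal_diff_op}. A $\sqrt{-1}$ in $c_\pm$ would make the lemma false for any $\sigma$ with $\sigma(\sqrt{-1})=-\sqrt{-1}$. This genuinely happens for the naive raising operator $\frac12\begin{pmatrix}1&\sqrt{-1}\\ \sqrt{-1}&-1\end{pmatrix}$: when $n=1$ it acts as $2\sqrt{-1}\bigl(\partial_z+\frac{k}{2\sqrt{-1}y}\bigr)$, so $c_+=2\sqrt{-1}$.

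\textbf{What is missing.} You need the fact that the normalization of $\iota_\pm$ (the factor $\frac12$, the $\sqrt{-1}\,u$, and conjugation by $\beta$) makes $c_\pm$ rational; the paper asserts $c_\pm=1$. The paper takes this from \cite[Proposition 2.5]{HPSS} and \cite[(A.8.6a), (A.8.6b)]{00_Shimura}.

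For $n=1$ you can check it directly. Here $X^+_{1,1}=\iota_+(1)=\frac14\begin{pmatrix}-\sqrt{-1}&1\\ 1&\sqrt{-1}\end{pmatrix}$. Differentiating $\varphi(g_z)=y^{k/2}f(z)$ along $H$, $E$ and $F$ gives
$X^+_{1,1}\varphi(g_z)=y^{(k+2)/2}\bigl(\partial_z+\frac{k}{2\sqrt{-1}\,y}\bigr)f(z)=y^{(k+2)/2}\bigl((D_\rho f)(z)\bigr)(e_{1,1})$,
so $c_+=1$ exactly. The same computation with $\iota_-(1)$ shows that $X^-_{1,1}\varphi$ corresponds to a rational multiple of $\bigl((Ef)(z)\bigr)(e_{1,1})$.

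Carrying out this identification for general $n$, which is what the cited results supply, is the gap in your proposal. With it in hand, your Step 2 goes through as written.
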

    \begin{proof}
        It suffices to consider the case where $\varphi$ generates an irreducible representation $\rho = \boxtimes_{v \mid \infty} \rho_v$ of $\prod_{v \mid  \infty} K_v$.
        Then, the representations $\boxtimes_{w \neq v}\rho_w \boxtimes (\rho_v \otimes \tau^1)$ and $\boxtimes_{w \neq v}\rho_w \boxtimes (\rho_v \otimes \sigma^1)$ are multiplicity free.
        In the isomorphism \eqref{isom_NH_MF_AF}, we denote by $\varphi(f \otimes x^*)$ the automorphic form corresponding to $f \otimes x^* \in N_\rho \otimes \rho^*$.
        We may write $\varphi$ as $\varphi(f,x^*)$.
        Without loss of generality, we may further assume that $x^*$ is a pure tensor $x^* = \otimes_w x_w^*$ and lies in $V_\bbQ^*$.
        Using the chosen rational structure of $S_1(T,V)$ and the argument of \cite[Proposition 2.5]{HPSS}, we obtain
        \[
        X_{v,i,j}^+ \varphi = \varphi(D_{v,\rho_v} f, e_{v,i,j}^* \otimes x^*), 
        \qquad X_{v,i,j}^- \varphi = \varphi(E_v f, e_{v,i,j}^* \otimes x^*).
        \]
        Here, we regard $e_{v,i,j}^* \otimes x^*$ as an element of $\otimes_{v' \neq v} V_{v'}^* \otimes S_1(T_v,V_{v}^*) = S_1(T_v,\bbC) \otimes V^*$.
        By the definition of the Galois action and \eqref{relation_Gal_diff_op}, we obtain
        \begin{align*}
        {^\sigma\left(\frac{1}{\pi\sqrt{-1}}X_{v,i,j}^+ \varphi\right)}
        &= \varphi({^\sigma(}(\pi\sqrt{-1})^{-1}D_{v,\rho_v} f), e_{\sigma \circ v,i,j}^* \otimes \otimes_{w\mid \infty} x_{\sigma \circ w}^*))\\
        &=\varphi((\pi\sqrt{-1})^{-1}D_{\sigma \circ v, \rho_{\sigma \circ v}} ({^\sigma f}), e_{\sigma \circ v,i,j}^* \otimes \otimes_{w\mid \infty} x_{\sigma \circ w}^*)\\
        &=\frac{1}{\pi\sqrt{-1}}X_{\sigma \circ v,i,j}^+ \cdot {^\sigma \varphi},
        \\ {^\sigma\left(\pi\sqrt{-1} \cdot X_{v,i,j}^- \varphi\right)}
        &= \varphi(^\sigma(\pi\sqrt{-1} \cdot E_v f), e_{\sigma \circ v,i,j}^* \otimes \otimes_{w\mid \infty} x_{\sigma \circ w}^*)\\
        &=
        \varphi(\pi\sqrt{-1}\cdot E_{\sigma \circ v} ({^\sigma f}), e_{\sigma \circ v,i,j}^* \otimes \otimes_{w \mid \infty} x_{\sigma \circ w}^*) \\
        &= \pi\sqrt{-1} \cdot X_{\sigma \circ v,i,j}^- \cdot {^\sigma \varphi}.
        \end{align*}
        The last equalities follow from an analogue of \cite[Proposition 2.5]{HPSS} (see also \cite[(A.8.6a) and (A.8.6b)]{00_Shimura}).
    \end{proof}

\subsection{Galois action for automorphic representations}\label{subsec_Gal_act_on_AR}
Let $v$ be a nonarchimedean place of $F$ and let $\pi_v$ be an irreducible representation on $V_{\pi_v}$.
Following \cite[\S 3.1]{90_Clozel}, for $\sigma \in \Aut(\bbC)$, we define ${^\sigma\pi_v}$ to be the representation on $V_{\pi_v} \otimes_{\bbC,\sigma}\bbC$.
For a Hilbert-Siegel cusp form $f$ of degree $n$, let $\pi_f$ be the cuspidal automorphic representation of $G_n(\bbA_F)$ generated by $f$.
For $\sigma \in \Aut(\bbC)$, define $\tau_\sigma = (\tau_{\sigma,p})_p \in \widehat{\bbZ}^\times \subset \widehat{\calO}_{F}^\times$ by the element corresponding to $\sigma$ via the restriction map $\Aut(\bbC) \rightarrow \Gal(\bbQ_\ab/\bbQ) \isom \widehat{\bbZ}^\times$, i.e., $\sigma(\psi_F(x)) = \psi_F(\tau_\sigma x)$ for any $x \in \bbA_{F, \fini}$.

We describe the action on ${^\sigma f}$.
Let $\varphi$ be an automorphic form corresponding to $f$ and let $K$ be an open compact subgroup of $G_n(\bbA_{F,\fini})$ that fixes $\varphi$.
For $\gamma \in G_n(F)$, by strong approximation, there exists ${^\sigma \gamma} \in G_n(F)$ such that ${^\sigma \gamma} \in \mathop{\mathrm{Ad}}(\diag(\tau_{\sigma,v}\cdot 1_n, 1_n))(\gamma) K G_n(\bbR)$.

\begin{lem}\label{lem_Gal_action_MF}
    For a Hilbert-Siegel modular form $f$ of weight $\rho$ and $\sigma \in \Aut(\bbC)$, we have
    \[
    {^\sigma(f|_{\rho} \gamma)} = {^\sigma f}|_{{^\sigma \rho}} {^\sigma\gamma}.
    \]
\end{lem}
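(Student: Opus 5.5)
The plan is to translate the statement into adelic language, where the Galois action on $q$-expansions is understood, and where conjugating $\gamma$ by $\diag(\tau_\sigma 1_n,1_n)$ appears naturally. I would rely on Shimura's description of the Galois action on Fourier expansions \cite[Theorem 14.12]{00_Shimura}, or its adelic form in \cite{HPSS}. For an adelic form $\varphi$ and $q=\diag(t\cdot 1_n,1_n)$ with $t\in\widehat{\calO}_F^\times$, the Fourier coefficients of $r(q)\varphi$ at $h$ are those of $\varphi$ twisted by $h\mapsto th$. Because $\sigma(\psi_F(x))=\psi_F(\tau_\sigma x)$, applying $\sigma$ to the coefficients of $\varphi$ gives the coefficients of ${^\sigma\varphi}$. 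Concretely, ${^\sigma\varphi}$ is obtained from the coefficientwise $\sigma$-conjugate of $\varphi$ by a right translation by $q_\sigma=\diag(\tau_\sigma^{-1}1_n,1_n)$ (or its inverse) at the finite places. Here I would take the reciprocity-law description of the Galois action on $q$-expansions as given.

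\textbf{Step 1: reduce to adelic forms.} Via \eqref{isom_NH_MF_AF}, let $\varphi_f$ be the adelic form attached to $f\otimes x^*$. Then $f|_\rho\gamma$ corresponds to the right translate $r(\gamma_\fini^{-1})\varphi_f$, where $\gamma_\fini$ denotes the finite part of $\gamma$. Indeed, by left $G_n(F)$-invariance, $\varphi(g_\infty\gamma_\infty)=\varphi(\gamma_\fini^{-1}g_\infty)$, up to the usual convention issues. So the claim becomes a compatibility
\[
{^\sigma(r(\gamma_\fini^{-1})\varphi)}=r(({^\sigma\gamma})_\fini^{-1})\,{^\sigma\varphi},
\]
which only needs to hold on $G_n(\bbR)$ times the identity coset.

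\textbf{Step 2: intertwining rule for finite translations.} Prove that ${^\sigma(r(k)\varphi)}=r(q_\sigma k q_\sigma^{-1}){^\sigma\varphi}$ for $k\in G_n(\bbA_{F,\fini})$. I would check this on generators. For $k=\bfn(x)$ the coefficient at $h$ is multiplied by $\psi_F(\tr(hx))$. Applying $\sigma$ gives $\psi_F(\tr(h\tau_\sigma x))$, which matches conjugating $\bfn(x)$ by $q_\sigma$ together with the twist of the coefficient index. Levi elements $\diag(A,{^tA^{-1}})$ are handled by a direct computation on Fourier coefficients. For the Weyl element at finite places, I would use that $G_n(\bbA_{F,\fini})$ is generated by $P_n(\bbA_{F,\fini})$ and $K$. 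For $k\in K\cap q_\sigma^{-1}Kq_\sigma$, the invariance ${^\sigma\varphi}$ fixed by $q_\sigma K q_\sigma^{-1}$ is compatible, since $\sigma$ preserves level up to this conjugation. An alternative is to write $k=\gamma' p$ with $\gamma'\in G_n(F)$ by strong approximation and use automorphy. I expect this step to be the main obstacle, because the Fourier expansion only sees the parabolic directly. The cleanest route is probably to quote Shimura's result that $\sigma$ commutes with the $G_n(\bbA_{F,\fini})$-action twisted by $q_\sigma$, which is the adelic reformulation of \cite[Theorem 14.12]{00_Shimura}.

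\textbf{Step 3: conclude.} Apply Step 2 with $k=\gamma_\fini^{-1}$. By construction, $\Ad(q_\sigma^{\pm1})(\gamma)\in {^\sigma\gamma}\,K\,G_n(\bbR)$, with the sign matched to the definition preceding the lemma. Hence $r(q_\sigma\gamma_\fini^{-1}q_\sigma^{-1}){^\sigma\varphi}=r(({^\sigma\gamma})_\fini^{-1}){^\sigma\varphi}$, because ${^\sigma\varphi}$ is right invariant under the conjugated open compact group and the archimedean discrepancy is absorbed by left $G_n(F)$-invariance. The archimedean part of the translation produces the weight ${^\sigma\rho}$ with permuted places, exactly as in the definition of ${^\sigma f}$. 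Translating back through \eqref{def_Gal_act_AF} gives ${^\sigma(f|_\rho\gamma)}={^\sigma f}|_{{^\sigma\rho}}{^\sigma\gamma}$.
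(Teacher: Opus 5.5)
There is a genuine gap, located where you suspected. Step~2, asserted for all $k\in G_n(\bbA_{F,\fini})$, is not an intermediate step; it is the lemma itself in adelic form. By strong approximation every $k$ can be written as $\gamma'_\fini u$ with $\gamma'\in G_n(F)$ and $u\in\mathrm{Stab}(\varphi)$, so $r(k)\varphi$ corresponds to $f|_\rho\gamma'^{-1}$. In particular, your strong-approximation alternative is circular. If you fall back on quoting Shimura for Step~2, then Steps~1 and~3 are only a change of language. One small correction in Step~3: since $f|_\rho\gamma$ corresponds to $r(\gamma_\fini^{-1})\varphi$, right invariance of ${}^\sigma\varphi$ absorbs an ambiguity ${}^\sigma\gamma\in u'\,\Ad(q_\sigma)(\gamma)$ only when $u'$ sits on the \emph{left}. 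With that fix your proof coincides with the paper's. The paper's proof consists exactly of the citation \cite[Lemma 10.5]{00_Shimura} and its proof, so nothing beyond that citation is being proved.

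The self-contained verification you sketch for Step~2 cannot replace the citation. What the Fourier expansion at the standard cusp gives you directly is the identity for $k\in P_n(F)\,\mathrm{Stab}(\varphi)$. Classically this means $\gamma\in P_n(F)\Gamma$, where $f|_\rho\gamma$ is again an expansion at $\infty$.

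Elements of $\mathrm{Stab}(\varphi)$ act trivially on $\varphi$ but not on its translates, so the identity does not propagate to products of your generators. Your clause about $k\in K\cap q_\sigma^{-1}Kq_\sigma$ has two readings, neither of which helps. If $K$ is the level, it covers only such trivially acting $k$. If $K$ is the maximal compact, then since $q_\sigma$ normalizes $G_n(\widehat{\calO}_F)$, the clause simply asserts the hard case.

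Even the Levi part is not a direct computation. Take $n=1$, $F=\bbQ$, $\Gamma=\Gamma(5)$, and translate by $\diag(a,a^{-1})$ with $a_5=2$ and $a_p=1$ for $p\neq 5$. This corresponds to $f|_\rho\gamma$ with $\gamma\in\mathrm{SL}_2(\bbZ)$ and $\gamma\equiv\diag(3,2)\bmod 5$. That is the expansion at a cusp which is not $\Gamma(5)$-equivalent to $\infty$. The same happens for $\gamma\equiv w_n$ at a place dividing the level.

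Nothing in the $q$-expansion at $\infty$ determines how $\sigma$ acts on these other expansions. That is precisely the content of Shimura's reciprocity law, proved via theta functions or canonical models. It is the missing ingredient in your argument unless you cite it.
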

\begin{proof}
    See \cite[Lemma 10.5]{00_Shimura} and its proof.
\end{proof}
As a direct consequence, we may explicitly determine the automorphic representation generated by a cusp form ${^\sigma f}$.
\begin{cor}
    For a nonarchimedean place $v$, a Hilbert-Siegel cusp form $f$ and $\sigma \in \Aut(\bbC)$, the $v$-component of the cuspidal automorphic representation $\pi_{{^\sigma f}}$ is given by $g \mapsto {^\sigma \pi_v}(\Ad(\tau_{\sigma,v} \cdot 1_n, 1_n)(g))$.
\end{cor}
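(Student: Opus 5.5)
The plan is to pass to adelic automorphic forms and compare Hecke actions at $v$, using Lemma \ref{lem_Gal_action_MF} as the basic input. Let $\varphi$ be the automorphic form attached to $f$ via \eqref{isom_NH_MF_AF}, so that ${^\sigma\varphi}$ is attached to ${^\sigma f}$ via \eqref{def_Gal_act_AF}. The first step is to translate Lemma \ref{lem_Gal_action_MF} into a statement about right translation at $v$. Concretely, for $g \in G_n(F_v)$, write $\varphi_g = r(g)\varphi$. Choosing $\gamma \in G_n(F)$ close to $g$ at $v$ and close to the identity at the other finite places of a large finite set (strong approximation), the adelic translate $r(g)\varphi$ corresponds classically to a finite combination of slashes $f|_\rho\gamma$, i.e.\ to a Hecke-type double coset operator.

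The second step applies $\sigma$ to this. Lemma \ref{lem_Gal_action_MF} gives ${^\sigma(f|_\rho\gamma)} = {^\sigma f}|_{{^\sigma\rho}}{^\sigma\gamma}$. By construction, ${^\sigma\gamma}$ lies in $\Ad(\diag(\tau_\sigma 1_n,1_n))(\gamma)KG_n(\bbR)$, so at $v$ it approximates $\Ad(\diag(\tau_{\sigma,v}1_n,1_n))(g)$ modulo $K_v$. This yields an identity
\[
{^\sigma(r(g)\varphi)} = r\bigl(\Ad(\diag(\tau_{\sigma,v}1_n,1_n))(g)\bigr)\,{^\sigma\varphi}
\]
for $g$ in $G_n(F_v)$, after checking compatibility with the shrinking of $K$. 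Both sides are determined by their values under the isomorphism \eqref{isom_NH_MF_AF}, and this isomorphism is $G_n(\bbA_{F,\fini})$-equivariant on the Hecke side.

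The third step identifies representations. The map $\varphi' \mapsto {^\sigma\varphi'}$ is $\sigma$-semilinear from the $v$-isotypic space of $\pi_f$ onto that of $\pi_{{^\sigma f}}$, since the Fourier coefficients transform by $\sigma$. Hence it induces a $\bbC$-linear isomorphism $V_{\pi_v}\otimes_{\bbC,\sigma}\bbC \to V_{(\pi_{{^\sigma f}})_v}$. By the displayed identity, this isomorphism intertwines ${^\sigma\pi_v}\circ\Ad(\tau_{\sigma,v}1_n,1_n)$ with $(\pi_{{^\sigma f}})_v$. Irreducibility of the local components, together with the restricted tensor product decomposition and the fact that the identity holds for each $v$ separately, then gives the claim.

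The main obstacle is the first two steps: making precise that $r(g)\varphi$ for $g \in G_n(F_v)$ is expressible through slashes by rational $\gamma$ in a way that commutes with $\sigma$. The issue is that $\gamma$ also moves the archimedean and other finite components. I would handle this by working with $K$-double cosets $K g K$. I would then use that the correspondence between adelic and classical forms converts $r(g)$ into a sum over $\Gamma\gamma\Gamma$, exactly as in Shimura's treatment in \cite[\S 10]{00_Shimura}, from which Lemma \ref{lem_Gal_action_MF} applies term by term.
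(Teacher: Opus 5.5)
Your proposal is correct and is essentially the paper's argument: the paper records this corollary as a direct consequence of Lemma \ref{lem_Gal_action_MF} (Shimura's Lemma 10.5), transported to adelic forms exactly as you describe. One simplification is that no double-coset sums are needed. By strong approximation you can pick $\gamma \in G_n(F)$ with $\gamma \in gK$ in $G_n(\bbA_{F,\fini})$, and then $r(g)\varphi$ corresponds to the single slash $f|_\rho\gamma^{-1}$. The archimedean component of $\gamma$ is exactly what the slash records, and its other finite components are absorbed by $K$, so the obstacle you flag does not actually arise.
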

\begin{rem}\label{rem_rep_gen_by_f^sigma}
    Since $\diag(\tau_{\sigma,v} \cdot 1_n, 1_n) \in \GSp_{2n}(F_v)$, when we consider the automorphic representations generated by $\GSp_{2n}(\bbA_F)$, the $v$-component of the cuspidal automorphic representation of $\GSp_{2n}(\bbA_F)$ generated by ${^\sigma f}$ is isomorphic to the representation ${^\sigma\pi_v}$ of $\GSp_{2n}(F_v)$.
\end{rem}

We will later consider the principal congruence subgroup $\Gamma(\frakN)$ of level $\frakN$, where $\frakN$ is an integral ideal of $\calO_F$.
This is defined by
\[
\Gamma(\frakN) = \{g \in G_n(\calO_F) \mid g \equiv 1_{2n} \bmod \frakN\}.
\]
For a nonarchimedean place $v$, the closure $K_v(\frakN)$ of the image of $\Gamma(\frakN)$ in $G_n(F_v)$ is a normal subgroup of $G_n(\calO_v)$.
The open compact subgroup $K_v(\frakN)$ is stable under the adjoint action of $\diag(\tau_{\sigma,v} \cdot 1_n, 1_n)$.
Hence, by Lemma \ref{lem_Gal_action_MF}, the map defined by $\sigma \in \Aut(\bbC)$ on $
\calN(G_n)_\rho$ as in \eqref{def_Gal_act_AF} preserves the $\prod_{v < \infty} K_v(\frakN)$-fixed vectors.
\section{Local zeta integrals and \texorpdfstring{$L$}{}-functions}\label{section_zeta_int}
% We review Tate's local factors and the standard local factors of $G_n(F_v)$ via the doubling integral \cite{GPSR_87}.
% Using the functional equation established in \cite{2005_Lapid-Rallis}, we calculate the local zeta integrals, which play a crucial role in this paper.

\subsection{Tate's local \texorpdfstring{$L$}{}-factors}
    We first recall Tate's local factors \cite{1979_tate}.
    Let $v$ be a place of $F$.
    For a unitary character $\mu_v$ of $F_v^\times$, let $L_v(s,\mu_v)$ be the local $L$-factor for $\mu_v$ defined in \cite[\S 3]{1979_tate}.
    For the nontrivial additive character $\psi_v$, the $\vep$-factor is the unique invertible function such that
    \[
    \frac{\int_{F_v^\times}{\widehat{\varphi}(x)} \mu_v^{-1}(x)|x|_v^{-s}\ d_{\psi_v} x}{L_v(1-s,\mu_v^{-1})}
    =
    \vep_v(s, \mu_v, \psi_v)
    \frac{\int_{F_v^\times}{\varphi(x)} \mu_v(x)|x|_v^{s-1}\ d_{\psi_v} x}{L_v(s,\mu_v)}
    \]
    for every Schwartz function $\varphi$ on $F_v$.
    Here, $d_{\psi_v} x$ is the selfdual measure with respect to $\psi_v$ and $\widehat{\varphi}$ is the Fourier transform of $\varphi$ with respect to $d_{\psi_v} x$.
    The function
    \begin{equation}\label{FE_Tate}
    \gamma_v(s,\mu_v,\psi_v) = \vep_v(s,\mu_v,\psi_v) \frac{L_v(1-s, \mu_v^{-1})}{L_v(s,\mu_v)}
    \end{equation}
    is called the local $\gamma$-factor of $\mu_v$ with respect to $\psi_v$.
    It satisfies the functional equation
    \[
    \gamma_v(s,\mu_v,\psi_v) \gamma_v(1-s,\mu_v^{-1},\psi_v) = \vep_v(s,\mu_v,\psi_v) \vep_v(1-s,\mu_v^{-1},\psi_v) =\mu_v(-1).
    \]
    If we replace $\psi_v$ by $\psi_{v,a}$, then
    \[
    \vep_v(s, \mu_v, \psi_{v,a}) = \mu_v(a)|a|_v^{s-1/2} \vep_v(s,\mu_v,\psi_v).
    \]
    For $\psi_{v,\bdelta_v^{-1}}$ of order $0$ and $c \in F_v^\times$ such that $c \calO_v$ is the conductor of $\mu_v$, we have
    \begin{equation}\label{def_epsilon_factor}
    \vep_v(s, \mu_v, \psi_{v,\bdelta_v^{-1}})
    =
    \begin{dcases}
        1 & \text{if $\mu_v$ is unramified;}\\
        \int_{c^{-1}\calO_v^\times} \mu_v^{-1}(x)|x|_v^{-s}\psi_{v,\bdelta_v^{-1}}(x)\ dx & \text{if $\mu_v$ is ramified.}
    \end{dcases}
    \end{equation}
    From this, we obtain $\vep_v(s,\mu_v,\psi_{v,\bdelta_v^{-1}}) = q_v^{-c(\mu_v) s} \vep_v(0,\mu_v, \psi_{v,\bdelta_v^{-1}})$, where $c(\mu_v)$ is the conductor exponent of $\mu_v$.
    For later use, we consider the relationship between the Weil constant and the epsilon factor.
    Let $\alpha_{\psi_v}(x)$ be the Weil constant associated with the additive character $\psi_v$ and $x \in F_v^\times$ as in \cite{2017_Ikeda}.
    Note that $\alpha_{\psi_{v,\bdelta_v^{-1}}}(x) = \alpha_{\psi_v}(\bdelta_v^{-1}x)$.

    \begin{lem}\label{lem_weil_const_epsilon_factor}
        For $\theta \in F^\times_v/F_v^{\times 2}$,
        we have
        \[
        \frac{\alpha_{\psi_{v,\bdelta_v^{-1}}}(1)}{\alpha_{\psi_{v,\bdelta_v^{-1}}}(\theta)} = \vep_v(1/2, \chi_\theta,\psi_{v,\bdelta_v^{-1}}).
        \]
    \end{lem}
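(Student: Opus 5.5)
Set $\psi' = \psi_{v,\bdelta_v^{-1}}$, which has order $0$. The plan is to compute both sides as Gauss-type integrals and compare them. For the Weil constant, the definition in \cite{2017_Ikeda} gives, for $x\in F_v^\times$,
\[
\int_{F_v} \phi(t)\,\psi'(xt^2)\,dt \;=\; \alpha_{\psi'}(x)\,|2x|_v^{-1/2}\int_{F_v}\widehat{\phi}(t)\,\psi'\!\left(-\frac{t^2}{4x}\right)dt
\]
in the principal-value sense, where $dt$ is self-dual for $\psi'$. Equivalently, $\alpha_{\psi'}(x)$ is the normalized limit of $\int_{\frakp_v^{-N}}\psi'(xt^2)\,dt$ as $N\to\infty$. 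Since $\alpha_{\psi'}$ depends only on $x$ modulo $F_v^{\times2}$, the ratio $\alpha_{\psi'}(1)/\alpha_{\psi'}(\theta)$ is well defined on $F_v^\times/F_v^{\times 2}$, as the statement requires.

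The key step is the Weil–Tate identity, which I would prove first:
\[
\alpha_{\psi'}(a)\,\alpha_{\psi'}(b) \;=\; \alpha_{\psi'}(1)\,\alpha_{\psi'}(ab)\,\langle a,b\rangle_v .
\]
This is the standard fact that $a\mapsto \alpha_{\psi'}(a)/\alpha_{\psi'}(1)$ is a second-degree character whose associated bicharacter is the Hilbert symbol, and I would cite it from \cite{2017_Ikeda} or from Weil's work. With it, the claim reduces to expressing $\alpha_{\psi'}(\theta)/\alpha_{\psi'}(1)$ via the quadratic Gauss sum of $\chi_\theta$.

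The direct computation splits by whether $\chi_\theta$ is ramified.

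\emph{Unramified case} ($\theta$ a unit times a square, with $F_v(\sqrt\theta)/F_v$ unramified). Here the right-hand side equals $1$ by \eqref{def_epsilon_factor}. For $v$ odd, $\int_{\calO_v}\psi'(\theta t^2)\,dt=1$ for any unit $\theta$, since $\psi'$ is trivial on $\calO_v$. This gives $\alpha_{\psi'}(\theta)=\alpha_{\psi'}(1)$, so the left-hand side is also $1$.

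\emph{Ramified case}, e.g. $\theta=\varpi_v u$ with $v$ odd, so that $c(\chi_\theta)=1$. Split $\int_{\frakp_v^{-1}}\psi'(\theta t^2)\,dt$ according to the residue class of $\varpi_v t$. This yields a quadratic Gauss sum over the residue field, normalized by $q_v^{-1/2}$. On the other side, \eqref{def_epsilon_factor} with $c=\varpi_v$ gives $\vep_v(1/2,\chi_\theta,\psi') = q_v^{1/2}\int_{\varpi_v^{-1}\calO_v^\times}\chi_\theta(x)\psi'(x)\,dx$, a Gauss sum of the quadratic character $x\mapsto(\bar x/\mathbb{F}_{q_v})$ twisted by $\chi_\theta(\varpi_v^{-1})$. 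I would match these using the classical identity
\[
\sum_{t}\psi(at^2) \;=\; \sum_x \Big(\frac{x}{q}\Big)\psi(ax) \;=\; \Big(\frac{a}{q}\Big)\, g .
\]
Keeping track of the factors $\chi_\theta(-1)$ and $\langle\varpi_v,u\rangle_v$ then makes both sides agree. One should also check that $\vep_v(1/2,\chi_\theta,\psi')^2=\chi_\theta(-1)$ is consistent with $\alpha_{\psi'}(\theta)^{-2}\alpha_{\psi'}(1)^2=\langle\theta,\theta\rangle_v=\chi_\theta(-1)$, which follows from $\alpha(x)\alpha(-x)=1$; this consistency check pins down the sign.

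The main obstacle is the residue characteristic $2$ and the archimedean place. At dyadic places an explicit Gauss-sum computation is painful, so I would avoid it by a global argument. Choose a global $\theta'\in F^\times$ approximating $\theta$ at $v$ and squares at the other dyadic places. Then use the product formulas $\prod_w \alpha_{\psi_w}(\theta')=1$ (Weil reciprocity) and $\prod_w \vep_w(1/2,\chi_{\theta'},\psi_w)=1$ (the global root number of a quadratic character is $1$). Together with the already-established odd and archimedean cases, these deduce the dyadic case, after converting $\psi_w$ to $\psi'_w$ via $\vep_w(s,\mu,\psi_{w,a})=\mu(a)|a|^{s-1/2}\vep_w(s,\mu,\psi_w)$ and the product formula for Hilbert symbols.

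At the real place, $F_v^\times/F_v^{\times2}=\{\pm1\}$ and $\alpha_{\psi}(\pm1)=e^{\pm\pi\sqrt{-1}/4}$, so the left-hand side at $\theta=-1$ is $\sqrt{-1}$. This matches the archimedean $\vep(1/2,\sgn,\psi)=\sqrt{-1}$, provided the Hilbert-symbol sign convention is handled correctly.
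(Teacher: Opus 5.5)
Your proof is correct in outline, but it takes a different route from the paper's. The paper computes no Gauss sums. It takes Ikeda's identity \cite[Lemma 1.3]{2017_Ikeda},
\[
\sum_{\theta'}\frac{\alpha_{\psi'}(1)}{\alpha_{\psi'}(\theta')}\gamma_v(s,\omega\chi_{\theta'},\psi')^{-1}=2|2|_v^{-2s}\omega^{-1}(4)\gamma_v(2s,\omega^2,\psi')^{-1}\gamma_v(s+1/2,\omega,\psi'),
\]
sets $\omega=\chi_\theta\neq\mathbf{1}$, and compares residues at $s=0$:
\begin{itemize}
\item on the left, only the term $\theta'=\theta$ has a pole, coming from $L_v(s,\mathbf{1})$;
\item on the right, the pole comes from $\gamma_v(2s,\mathbf{1},\psi')^{-1}$;
\item the surviving factor is $\gamma_v(1/2,\chi_\theta,\psi')=\vep_v(1/2,\chi_\theta,\psi')$.
\end{itemize}
That argument is a few lines, is uniform over all nonarchimedean $v$ including dyadic ones, and puts all the content into Ikeda's lemma.

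Your route replaces that lemma with classical inputs:
\begin{itemize}
\item explicit quadratic Gauss sums at odd places;
\item the Fresnel integral at real places;
\item for dyadic $v$: weak approximation, Weil's reciprocity $\prod_w\alpha_{\psi_w}(\theta')=1$, and the fact that quadratic Hecke characters have root number $+1$.
\end{itemize}
This is elementary at odd places and independent of \cite{2017_Ikeda}. The cost is a case analysis and genuinely global input for a purely local statement.

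A few points to tidy up when you write it out.
\begin{enumerate}
\item The passage between $\psi_w$ and $\psi'_w$ is purely local. The Weil--Rao identity gives $\alpha_{\psi_{v,a}}(1)/\alpha_{\psi_{v,a}}(\theta)=\langle a,\theta\rangle_v\,\alpha_{\psi_v}(1)/\alpha_{\psi_v}(\theta)$, while $\vep_v(1/2,\chi_\theta,\psi_{v,a})=\chi_\theta(a)\vep_v(1/2,\chi_\theta,\psi_v)$. So the identity is independent of $\psi$ place by place, and no product formula for Hilbert symbols is needed; the $\bdelta_w$ are not global elements anyway.
\item In the ramified odd case the prefactor is $q_v^{-1/2}$, not $q_v^{1/2}$, since $|x|_v=q_v$ on $\varpi_v^{-1}\calO_v^\times$. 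With that correction, both sides equal $q_v^{-1/2}\chi_\theta(-1)\langle\varpi_v,u\rangle_v\,g$. Here $g$ is the quadratic Gauss sum of the residue field with respect to $y\mapsto\psi'(y/\varpi_v)$.
\item The relation between the squares of the two sides is only a consistency check and cannot fix the sign. The sign comes from the comparison in the previous point.
\item The statement depends on normalizing $\alpha_\psi(x)$ as the Weil index of $t\mapsto\psi(xt^2)$, as you do. With $\psi(\tfrac12 xt^2)$ instead, the left-hand side would be off by $\langle 2,\theta\rangle_v$.
\end{enumerate}
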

    \begin{proof}
        By \cite[Lemma 1.3]{2017_Ikeda}, for a character $\omega$ of $F_v^\times$, we have
        \[
        \sum_{\theta' \in F_v^\times/F_v^{\times2}} \frac{\alpha_{\psi_{v,\bdelta_v^{-1}}}(1)}{\alpha_{\psi_{v,\bdelta_v^{-1}}}(\theta')}
        \gamma_v(s,\omega\chi_{\theta'}, \psi_{v,\bdelta_v^{-1}})^{-1}
        =
        2|2|_v^{-2s}\omega^{-1}(4) \gamma_v(2s,\omega^2,\psi_{v,\bdelta_v^{-1}})^{-1}
        \gamma_v(s+1/2, \omega, \psi_{v,\bdelta_v^{-1}}).
        \]
        Substituting $\omega = \chi_\theta$ with $\omega \neq \mathbf{1}$ and taking the residue at $s=0$, we have
        \[
        \frac{\alpha_{\psi_{v,\bdelta_v^{-1}}}(1)}{\alpha_{\psi_{v,\bdelta_v^{-1}}}(\theta)} \cdot \vep_v(0,\mathbf{1},\psi_{v,\bdelta_v^{-1}})^{-1}\frac{1-q_v^{-1}}{\log(q_v)}
        =
        2 \cdot \vep_v(0,\mathbf{1},\psi_{v,\bdelta_v^{-1}})^{-1} \cdot \frac{1-q_v^{-1}}{2\log(q_v)} \cdot \vep_v(1/2,\chi_\theta,\psi_{v,\bdelta_v^{-1}})
        \]
        and hence
        \[
        \frac{\alpha_{\psi_{v,\bdelta_v^{-1}}}(1)}{\alpha_{\psi_{v,\bdelta_v^{-1}}}(\theta)} 
        = 
        \vep_v(1/2, \chi_\theta,\psi_{v,\bdelta_v^{-1}}).
        \]
        The case $\theta \in F^{\times 2}_v$ is immediate by $\vep_v(1/2, \chi_\theta,\psi_{v,\bdelta_v^{-1}}) = \vep_v(1/2, \mathbf{1},\psi_{v,\bdelta_v^{-1}}) = 1$.
    \end{proof}

\subsection{Degenerate principal series representations}
    Let $v$ be a place of $F$.
    The degenerate principal series representation is defined to be the normalized induced representation
    \[
    I_{v,n}(s,\mu_v) = \Ind_{P_n(F_v)}^{G_{n}(F_v)} ((\mu_v\cdot | \ |_{v}^s)\circ\Delta), \qquad I_{n}(s,\mu) = \otimes_v I_{v,n}(s,\mu_v).
    \]

    For a function $(s,g) \mapsto f_{v,s}(g)$ on $\bbC \times G_n(F_v)$ (or $f_{v,s}$ for short), we say that $f_{v,s}$ is standard if the restriction $f_{v,s}|_{K_v}$ is independent of $s$.
    We say that a standard section $f_{v,s}$ is $K_v$-finite if $f_{v,s_0}$ is $K_v$-finite for some $s=s_0$.
    By a holomorphic section (resp.\ meromorphic section), we mean a finite sum of standard sections with coefficients in holomorphic functions (resp.~meromorphic functions).

    Put
    \[
    w_n = 
    \begin{pmatrix}
        0_n & 1_n\\
        -1_n & 0_n
    \end{pmatrix}.
    \]
    We define the intertwining operator $M_{v,w_n,s} \colon I_{v,n}(s,\mu_v) \rightarrow I_{v,n}(-s,\mu_v^{-1})$ by
    \begin{align}\label{def_int_op}
    M_{v,w_{n},s}f_s(g) = \int_{N_n(F_v)} f_s(w_n ng)\, dn.
    \end{align}
    Here, $dn$ is the Haar measure of $N_n(F_v)$ so that $\vol(N_n(\calO_v))=1$.
    The defining integral in \eqref{def_int_op} converges absolutely for $\mathrm{Re}(s) > (n+1)/2$ and can be continued meromorphically to all $s \in \bbC$.
    Set
    \[
    a_{v,n}(s,\mu_v) = L_v(s-(n-1)/2, \mu_v)\prod_{j=1}^{\lfloor n/2\rfloor} L_v(2s-n+2j, \mu_v^2)
    \]
    and
    \[
    b_{v,n}(s, \mu_v) = L_v(s+(n+1)/2, \mu_v)\prod_{j=1}^{\lfloor n/2\rfloor} L_v(2s+n+1-2j, \mu^2_v).
    \]
    Put $a_{n}^S(s,\mu) = \prod_{v \not \in S} a_{v,n}(s,\mu_v),b_{n}^S(s,\mu) = \prod_{v \not \in S} b_{v,n}(s,\mu_v)$ and $M_{v,w_n,s}^* = a_{v,n}(s,\mu_v)^{-1}M_{v,w_n,s}$.
    \begin{prop}[{\cite[Lemma 3.1]{2014_Yamana_theta}}]\label{prop_holomorphy_M^*}
        The operator $M_{v,w_n,s}^*$ is nonzero holomorphic for any $s$.
    \end{prop}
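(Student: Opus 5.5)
The plan is to reduce holomorphy and nonvanishing of $M_{v,w_n,s}^*$ to a statement about a single function, namely the normalized intertwining operator on the spherical or lowest-$K_v$-type vector, together with the rank-one reductions of the poles. The poles of $M_{v,w_n,s}$ will be controlled by the Gindikin–Karpelevich factorization. Concretely, we realize $I_{v,n}(s,\mu_v)$ as a subrepresentation of an unramified or tamely twisted principal series of the Borel subgroup, induced from the character $\mu_v|\cdot|^{s-(n-1)/2+i-1}$ on the $i$-th diagonal entry up to the modulus shift. We then factor $w_n$ as a product of simple reflections and write $M_{v,w_n,s}$ as the corresponding composition of rank-one intertwining operators. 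The roots flipped by $w_n$ are exactly the $e_i+e_j$ with $i\le j$. For long roots $2e_i$ the rank-one operator has normalizing factor $L_v(s-(n-1)/2+i-1,\mu_v)$, and for short roots $e_i+e_j$ with $i<j$ the factor is $L_v(2s-n+i+j-1,\mu_v^2)$, each divided by the corresponding $L$ at the argument shifted by $+1$.

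The first step is to show that the naive product of these $L$-factors overcounts. Restricting from the full principal series to the degenerate subspace cancels most of the poles, in the following sense. The Siegel-parabolic induced space is spanned by sections which are already invariant under the Levi $\GL_n$ part, so the rank-one operators for roots inside $\GL_n$ act by scalars that telescope. We then verify, via the spherical computation (for unramified $\mu_v$, using the classical Piatetski-Shapiro–Rallis / Kudla–Rallis formula $M_{w_n,s}f^0=\frac{a_{v,n}(s,\mu_v)}{b_{v,n}(s,\mu_v)}f^0$), that $a_{v,n}(s,\mu_v)$ exactly matches the surviving numerator. This gives holomorphy of $M^*$ on the spherical vector, and since $b_{v,n}^{-1}$ is entire, the result is holomorphic and nonzero there.

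To pass to arbitrary holomorphic sections, we use that $I_{v,n}(s,\mu_v)$ is generated by finitely many $K_v$-types and that $M^*_{v,w_n,s}$ is $G_n(F_v)$-equivariant. A pole of order $k$ at $s_0$ would give a nonzero intertwining map from $I_{v,n}(s_0,\mu_v)$ obtained as the leading Laurent coefficient. For this step, and for ramified $\mu_v$ and archimedean $v$, we would instead invoke the known structure: the poles of $M_{w_n,s}$ on the degenerate principal series are bounded by those of $a_{v,n}$. This is Kudla–Rallis and Sweet in the real and $p$-adic settings; alternatively, one computes $M$ on the functions supported on the big cell, i.e.\ Shimura's sections, where the integral becomes a Fourier transform of $\mu_v(\det x)|\det x|^{-s}$ on $\Sym_n$, a Gauss/Siegel-type integral whose poles are explicitly those of $a_{v,n}$. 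Nonvanishing is easy: at any $s$ one exhibits a section, again supported in the big cell, whose image is nonzero.

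The main obstacle is the uniform pole bound for all sections at all $s$, especially at reducibility points where the degenerate principal series has several constituents and cancellation must be checked on each $K_v$-type. There we would simply defer to \cite[Lemma 3.1]{2014_Yamana_theta}, as the statement does, with the big-cell computation serving as a sanity check.
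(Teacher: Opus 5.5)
The paper gives no argument of its own here; the proposition is simply quoted from Yamana's Lemma 3.1. So your final step, deferring the uniform pole bound to that lemma, is exactly what the paper does. Your Gindikin--Karpelevich bookkeeping is also correct: with $\lambda_i=s-(n-1)/2+i-1$, the factors are $L_v(\lambda_i,\mu_v)$ for $2e_i$ and $L_v(\lambda_i+\lambda_j,\mu_v^2)$ for $e_i+e_j$, and they telescope to $M_{v,w_n,s}f^0=a_{v,n}(s,\mu_v)b_{v,n}(s,\mu_v)^{-1}f^0$.

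The trouble is with what you claim to establish beyond the citation.

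\textbf{The cancellation mechanism.} Your explanation is wrong. The roots $e_i-e_j$ of the Levi never enter $M_{v,w_n,s}$, since it integrates only over $N_n$. The telescoping on $f^0$ comes solely from the $\lambda_i$ forming an arithmetic progression of step $1$. It does not come from Levi-invariance of sections, which in fact transform by $\mu_v|\cdot|_v^{s+(n+1)/2}\circ\Delta$. More importantly, the spherical computation says nothing about other $K_v$-types. The a priori bound inherited from the principal series is the full product $\prod_i L_v(\lambda_i,\mu_v)\prod_{i<j}L_v(\lambda_i+\lambda_j,\mu_v^2)$. Cutting this down to $a_{v,n}(s,\mu_v)$ on all of $I_{v,n}(s,\mu_v)$ is precisely the content you end up citing.

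\textbf{The spherical vector.} $M^*_{v,w_n,s}f^0=b_{v,n}(s,\mu_v)^{-1}f^0$ vanishes at the poles of $b_{v,n}$, so it is not ``nonzero there''.

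\textbf{Nonvanishing via the big cell.} This is not easy at a pole $s_0$ of $a_{v,n}$, which is the only place nonvanishing has content. Take $f$ supported in $P_nw_nN_n$ with $f(w_n\bfn(x))=\phi(x)$. Then $M_{v,w_n,s}f(\bfn(y))=\int\phi$ for every $s$, so $M^*_{v,w_n,s}f(\bfn(y))=a_{v,n}(s,\mu_v)^{-1}\int\phi$, which vanishes at $s_0$. Nonvanishing at $s_0$ is a sharpness statement: $M_{v,w_n,s}$ must attain a pole of order $\ord_{s_0}a_{v,n}(s,\mu_v)$ on some section. With big-cell sections this requires two things:
\begin{enumerate}
\item evaluating at $w_n\bfn(y)$;
\item knowing the exact pole orders of $\int_{\Sym_n(F_v)}\phi(y-u)\mu_v(\det u)|\det u|_v^{s-(n+1)/2}\,du$.
\end{enumerate}
The second is an Igusa-type computation, and it is harder for ramified $\mu_v$ with $\mu_v^2$ unramified, or at archimedean $v$ where $a_{v,n}$ can have multiple poles.

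For unitary unramified $\mu_v$ at a finite place there is a cheap fix. The poles of $a_{v,n}$ lie in $\mathrm{Re}(s)\ge 0$ and those of $b_{v,n}$ in $\mathrm{Re}(s)\le -1/2$. So $f^0$ detects nonvanishing at the poles of $a_{v,n}$, and a big-cell section with $\int\phi\neq0$ handles every other $s$.

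\textbf{Holomorphy.} Even granting the big-cell pole computation, holomorphy on big-cell sections does not give holomorphy on all holomorphic sections. You would also need to show that these sections generate $I_{v,n}(s,\mu_v)$ in families.

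As written, neither holomorphy nor nonvanishing is established independently. Like the paper, your proposal amounts to the citation.
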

    
    We consider the intertwining operator $M_{v,w_{2n},s}$ for $G_{2n}$ and the functional equation.
    For $B \in \Sym_{2n}(F_v)$ with $\det B \neq 0$, we define a generalized Whittaker functional
    \[
    \ell_{\psi_{v}, B}(f_{v,s}) = \int_{\Sym_{2n}(F_v)} f_{v,s}
   (w_{2n}\bfn(x)) \psi_{v}(\tr(Bx))\ dx
    \]
    for a section $f_{v,s}$ of $I_{v,2n}(s,\mu_v)$.
    The degenerate Whittaker integral $\ell_{\psi_{v}, B}$ converges absolutely for $\mathrm{Re}(s)> n + 1/2$ and can be continued holomorphically to all $s$ by \cite{1979_Karel, 1988_Wallach}.
    Then, there exists a meromorphic function $c(s,\mu_v,B,\psi_{v})$ such that
    \begin{align}\label{FE_Siegel_series}
    \ell_{\psi_{v}, B} 
    \circ M_{v,w_{2n},s} = \mu_v(\det(B))^{-1}|\det(B)|^{-s}_v c(s,\mu_v,B,\psi_{v}) \cdot \ell_{\psi_{v}, B}.
    \end{align}
    Note that for $\lambda \in F_v^\times$, the function $c(s,\mu_v,B,\psi_{v, \lambda})$ equals
    \[
    c(s,\mu_v,B,\psi_{v, \lambda}) 
    =
    \mu_v(\lambda)^{-2n}|\lambda|_v^{-2ns}
    c(s,\mu_v,B,\psi_{v})
    \]
    and $c(s,\mu_v,B,\psi_{v, \lambda}) = c(s,\mu_v,B,\psi_{v, \lambda}^{-1})$.

    \begin{thm}[{\cite[Theorem 2.1]{2017_Ikeda}}]\label{Whittaker_Int_op}
        The function $c(s,\mu_v, B,\psi_{v, \bdelta_v^{-1}})$ is given by
        \begin{align*}
        c(s,\mu_v, B,\psi_{v, \bdelta_v^{-1}})
        &=
        \gamma_v(s-n+1/2, \mu_v, \psi_{v, \bdelta_v^{-1}})^{-1}
        \prod_{j=1}^{n} \gamma_v(2s-2n+2j, \mu_v^2,\psi_{v, \bdelta_v^{-1}})^{-1}\\
        &\qquad \cdot |2|_v^{-2ns}\mu_v(4)^{-n}
        \vep_v(1/2, \chi_{D_B}, \psi_{v,\bdelta_v^{-1}})^{-1}
        \gamma_v(s+1/2, \mu_v\chi_{D_B},\psi_{v, \bdelta_v^{-1}}).
        \end{align*}
    \end{thm}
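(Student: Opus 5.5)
The plan is to reduce the computation of $c(s,\mu_v,B,\psi_{v,\bdelta_v^{-1}})$ to a comparison of local Siegel series, following Ikeda's approach. First, since $c$ does not depend on the section, I evaluate both sides of \eqref{FE_Siegel_series} on a convenient test section. The natural choice is a section $f_{v,s}$ supported in the big cell $P_{2n}(F_v)w_{2n}N_{2n}(F_v)$, given by $f_{v,s}(w_{2n}\bfn(x)) = \phi(x)$ with $\phi$ a Schwartz function on $\Sym_{2n}(F_v)$. Then $\ell_{\psi_v,B}(f_{v,s}) = \widehat{\phi}(B)$ up to normalization, and this is nonzero for a suitable $\phi$.

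Second, I compute $M_{v,w_{2n},s}f_{v,s}$ on the big cell. Unfolding the definition \eqref{def_int_op} gives
\[
M_{v,w_{2n},s}f_{v,s}(w_{2n}\bfn(y)) = \int_{\Sym_{2n}(F_v)} \mu_v(\det x)^{-1}|\det x|_v^{-s-(2n+1)/2}\,\phi(y - x^{-1})\, dx .
\]
Applying $\ell_{\psi_v,B}$ then turns the left-hand side into $\widehat\phi(B)$ multiplied by the Fourier transform of the distribution $\mu_v^{-1}(\det x)|\det x|_v^{-s-(2n+1)/2}$ on $\Sym_{2n}$, evaluated at $B$. The problem is thus a local zeta-functional computation: the Fourier transform of $\mu_v(\det x)|\det x|^{s'}$ on symmetric matrices. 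By Igusa and Sato–Shintani, this transform equals a factor of the form $\mu_v(\det B)^{-1}|\det B|^{-s}$ times a gamma-matrix, which depends only on the $\GL_{2n}$-orbit of $B$, and hence on $\det B$ and the Hasse invariant.

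Third, I determine these coefficients explicitly. I will do this by induction on the size of the matrix through a partial Fourier transform: diagonalize $B$ and peel off one variable at a time. Each rank-one step contributes a Tate gamma factor $\gamma_v(\cdot,\mu_v\chi,\psi)$ together with a Weil constant $\alpha_\psi$ coming from the Gaussian-type integral. Alternatively, I can reduce to known Siegel series: take $\mu_v$ unramified and $B$ integral, where Katsurada's/Shimura's formulas for the unramified Siegel series together with the functional equation fix $c$. Then extend to general $\mu_v$ by analytic continuation and the invariance of $c$ under $B \mapsto {}^t a B a$.

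Finally, the products of Weil constants $\alpha_\psi(b_i)/\alpha_\psi(1)$ collapse into $\vep_v(1/2,\chi_{D_B},\psi)^{-1}$ via Lemma \ref{lem_weil_const_epsilon_factor} and the Hasse invariant relations. The powers $|2|_v^{-2ns}\mu_v(4)^{-n}$ arise from the duplication formula used in that lemma's proof. The even-size parity ($2n$) is what produces the single factor $\gamma_v(s+1/2,\mu_v\chi_{D_B})$. I expect the main obstacle to be the bookkeeping of the Weil constants and the Hasse invariant, so that they combine exactly into the stated $\vep$-factor and the duplication-formula constants; the analytic steps are standard.
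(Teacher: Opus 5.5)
\textbf{Verdict.} Your plan takes a genuinely different route from the paper, and its central computation has a real gap. The paper does not prove this statement. It imports Ikeda's Theorem 2.1 and adjusts for two conventions: the additive character in $\ell_{\psi_v,B}$ is the complex conjugate of Ikeda's, and the Haar measure on $N_{2n}(F_v)$ is normalized differently. It then rewrites Ikeda's ratio of Weil constants as $\vep_v(1/2,\chi_{D_B},\psi_{v,\bdelta_v^{-1}})^{-1}$ using Lemma \ref{lem_weil_const_epsilon_factor}. Only your final step overlaps with this.

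\textbf{What is correct.} You instead try to re-derive Ikeda's theorem, and your first two steps are a correct reduction. For a big-cell section one gets
\[
\mu_v(\det B)^{-1}|\det B|_v^{-s}\,c(s,\mu_v,B,\psi_{v,\bdelta_v^{-1}})=\int_{\Sym_{2n}(F_v)}\mu_v(\det x)\,|\det x|_v^{s-(2n+1)/2}\,\psi_{v,\bdelta_v^{-1}}(\tr(Bx))\,dx .
\]
This identity must be read as a Fourier transform of tempered distributions, and that needs a short justification. The reason is that $M_{v,w_{2n},s}$ converges only for $\mathrm{Re}(s)>n+1/2$, while $\ell_{\psi,B}$ on $I_{v,2n}(-s,\mu_v^{-1})$ converges only for $\mathrm{Re}(s)<-n-1/2$, so there is no common region of absolute convergence.

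\textbf{The gap.} The third step carries the entire content of Ikeda's theorem, and it is not actually carried out.

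\emph{The ``alternative'' route fails.} Analytic continuation in $s$ only moves $\mu_v$ within the family $\{\mu_v|\cdot|_v^t\}$. Unramified characters are exactly $|\cdot|_v^t$, so they reduce to $\mu_v=\mathbf{1}$ after shifting $s$, and from them you never reach a ramified $\mu_v$. Invariance under $B\mapsto {}^taBa$ moves $B$, not $\mu_v$. Ramified $\mu_v$ at places of $S_1$ is precisely the case the paper needs, for example in Theorem \ref{thm_s_0=1}.

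\emph{The inductive route is viable in principle but misdescribed.} Write $x=\begin{pmatrix}a&\beta\\{}^t\beta&d\end{pmatrix}$ and complete the square. The Gaussian integral in $\beta$ then produces $\prod_{i\ge 2}\alpha_\psi(a^{-1}b_i)$. This product is a quadratic character of $a$ only when the number of factors is even. So at every other step you do not get a single Tate factor. You get a sum over $\theta\in F_v^\times/F_v^{\times 2}$ of factors $\gamma_v(\cdot,\mu_v\chi_\theta,\psi)$ with Weil-constant coefficients. Those sums must be collapsed using Ikeda's identity, the one quoted in the proof of Lemma \ref{lem_weil_const_epsilon_factor}.

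After that, two further things must cancel:
\begin{enumerate}
\item the gamma factors twisted by the partial discriminants of $\diag(b_k,\ldots,b_{2n})$;
\item the Hilbert-symbol and Hasse-invariant terms coming from the Weil constants.
\end{enumerate}
They must telescope so that only $\chi_{D_B}$ survives. This is the statement that, for even size, $c$ depends on $B$ only through its discriminant. Proving that cancellation is the theorem itself, not bookkeeping.

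As written, your proposal is a plausible program rather than a proof. At the paper's level, the way to close it is to cite Ikeda and verify the conventions and the Weil-constant conversion, as the paper does.
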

    \begin{proof}
        Note that $\ell_{\psi_v,B}$ differs from the one used in \cite[\S 3]{2017_Ikeda}; the associated additive character is the complex conjugate of the one used there. We also note that the measure on $N_{2n}(F_v)$ is different from the one used in \cite{2017_Ikeda}.
        The statement follows from 
        \cite[Theorem 2.1]{2017_Ikeda} and Lemma \ref{lem_weil_const_epsilon_factor}.
    \end{proof}
    
    For $B \in \Sym_{2n}(F_v)$ with $\det(B) \neq 0$, we define the normalization factor
    \[
    C(s,\mu_v, \psi_{v,\bdelta_v^{-1}})
    =
    c(s,\mu_v, B, \psi_{v,\bdelta_v^{-1}})
    \frac{\mu_v(2)^{2n}|2|_v^{2ns}\vep_v(1/2, \chi_{D_B}, \psi_{v,\bdelta_v^{-1}})}{\gamma_v(s+1/2,\mu_v\chi_{D_B}, \psi_{v,\bdelta_v^{-1}})}
    \]
    and the normalized intertwining operator
    \[
    M^\dagger_{v,w_{2n},s} = C(s,\mu_v,\psi_{v,\bdelta_v^{-1}})^{-1}M_{v,w_{2n},s}.
    \]
    The normalization factor $C(s,\mu_v, \psi_{v,\bdelta_v^{-1}})$ is given by
    \[
    C(s,\mu_v, \psi_{v,\bdelta_v^{-1}}) =
    \gamma_v(s-n+1/2, \mu_v, \psi_{v, \bdelta_v^{-1}})^{-1}
    \prod_{j=1}^{n} \gamma_v(2s-2n+2j, \mu_v^2,\psi_{v, \bdelta_v^{-1}})^{-1} 
    \]
    and independent of $B$.
    By \eqref{FE_Tate} and \eqref{FE_Siegel_series}, $M_{v,w_{2n},s}^\dagger$ satisfies the functional equation
    \[
    M^\dagger_{v,w_{2n},s}\circ
    M^\dagger_{v,w_{2n},-s} = \mu_v(-1) \cdot \mathrm{Id}.
    \]
    
\subsection{Local zeta integrals and Lapid-Rallis' local factors}\label{LR_local_factors}
    Fix a positive integer $n$.
    Recall the doubling embedding of $G_n \times G_n$ into $G_{2n}$.
    For $\tau_v \in \calO_{v}^\times$, set 
    \[
    \delta=
    \left(
    \begin{array}{cc|cc}
        1_n & 0_n & 0_n & 0_n \\
        0_n & 1_n & 0_n & 0_n \\
        \hline
        0_n & 1_n & 1_n & 0_n\\
        1_n & 0_n & 0_n & 1_n
    \end{array}
    \right), \qquad 
    \delta_{\tau_{v}} =
    \left(
    \begin{array}{cc|cc}
        1_n & 0_n & 0_n & 0_n \\
        0_n & 1_n & 0_n & 0_n \\
        \hline
        0_n & \tau_{v} \cdot 1_n & 1_n & 0_n\\
        \tau_{v} \cdot 1_n & 0_n & 0_n & 1_n
    \end{array}
    \right).
    \]
    We define an embedding $\iota \colon G_n \times G_n \rightarrow G_{2n}$ by
    \[
    \iota 
    \left(
    \begin{pmatrix}
        a_1&b_1\\
        c_1&d_1
    \end{pmatrix},
    \begin{pmatrix}
        a_2 & b_2 \\
        c_2 & d_2
    \end{pmatrix}
    \right) 
    = 
    \left(
    \begin{array}{cc|cc}
        a_1 & 0 & b_1 & 0 \\
        0 & a_2 & 0 & b_2 \\
        \hline
        c_1 & 0 & d_1 & 0 \\
        0 & c_2 & 0 & d_2
    \end{array}
    \right).
    \]
    Note that $\delta \cdot \iota(g,\check{g})\cdot \delta^{-1}$ lies in $P_{2n}$ for any $g \in G_n$, where 
    \[
    \text{
    $\check{g} = 
    \begin{pmatrix}
        d & c \\
        b & a
    \end{pmatrix}\qquad$ for  $g = \abcd.$
    }
    \]

    Let $\pi_v$ be an irreducible unitary representation of $G_n(F_v)$ and $( \ , \ )$ be the canonical pairing on $\pi_v \otimes \pi_v^\vee$. For a holomorphic section $f_{v,s} \in I_{v,2n}(s, \mu_v)$, and for $w \in \pi_v, w^\vee \in \pi_v^\vee$, put
   \[
   T_v(s, f_{v,s})(w) = \int_{G_n(F_v)} f_{v,s} (\delta^{-1} \cdot \iota(g,1_{2n}))  \pi_v(g)w \, dg, \qquad 
    Z_v(s,f_{v,s},w,w^\vee) = (T_v(s,f_{v,s})(w), w^\vee).
    \]
    These integrals converge absolutely for ${\rm Re}(s) \gg 0$, and $Z_v(s,f_{v,s},w,w^\vee)$ admits a meromorphic continuation to the whole complex plane. 
    In the choice of $f_{v,s}$ in \S \ref{subsec_choice_global_section} below, $T_v(s,f_{v,s})(w)$ also admits a meromorphic continuation.
    % One of the purposes of this paper is to compute the zeta integral associated with a new meromorphic section explicitly.
    % The explicit computation below yields the arithmeticity of the standard $L$-values in the left half plane.

    \begin{rem}
        Let 
        \[
        h_\bullet = 
        \left(\begin{array}{cc|cc}
            1_n & 0_n & 0_n & -\frac{1}{2}1_n \\ 
            0_n & 1_n & -\frac{1}{2}1_n & 0_n \\ 
            \hline
            0_n & 1_n & \frac{1}{2}1_n & 0_n \\ 
            1_n & 0_n & 0_n & \frac{1}{2}1_n
        \end{array}\right).
        \]
        Then, we have
        \[
        Z(s, r(h_\bullet)f_s, w, w^\vee) = Z'(f_s, w, w^\vee)
        \]
        where $r(h_\bullet)f_s$ denotes the right translation of $f_s$ by $h_\bullet$, and $Z'(f_s, w, w^\vee)$ denotes the zeta integral of Piatetski-Shapiro and Rallis \cite{1986_PS-R}. 
    \end{rem}

\begin{defn}
We define the local $\gamma$-factors at $v$ to be the unique family of meromorphic functions $\gamma_v(s, \pi_v, \mu_v, \psi_v)$ in $s$ parameterized by an irreducible representation $\pi_v$ of $G_n(F_v)$, a character $\mu_v$ of $F_v^\times$, and a nontrivial additive character $\psi_v$ of $F_v$.
The local $\gamma$-factors are characterized by the ten properties \cite[Theorem 4]{2005_Lapid-Rallis}. Moreover, we define the local $L$-factors $L_v(s,\pi_v,\mu_v)$ and the local $\varepsilon$-factors $\varepsilon_v(s, \pi_v, \mu_v, \psi_v)$ as in \cite[\S10]{2005_Lapid-Rallis}.
\end{defn}

These three factors satisfy the relation
\[
\gamma_v(s+1/2, \pi_v^\vee, \mu_v, \psi_v) = \varepsilon_v(s+1/2, \pi_v^\vee, \mu_v, \psi_v)\frac{L_v(-s+1/2, \pi_v, \mu^{-1}_v)}{L_v(s+1/2, \pi_v^\vee, \mu_v)}.
\]
When we replace $\psi_{v}$ by $\psi_{v,\lambda}$, one has
\[
\gamma_v(s, \pi_v, \mu_v, \psi_{v,\lambda}) = (\mu_v(\lambda) |\lambda|_v^{s-1/2})^{2n+1} \gamma_v(s,\pi_v,\mu_v,\psi_v).
\]
It is known that the local factors are selfdual, that is
\begin{equation}\label{selfduality_local_factor}
\gamma_v(s,\pi_v,\mu_v,\psi_v) = \gamma_v(s,\pi_v^\vee,\mu_v,\psi_v),\ L_v(s,\pi_v,\mu_v) = L_v(s,\pi_v^\vee,\mu_v), \ \vep_v(s,\pi_v,\mu_v,\psi_v) = \vep_v(s,\pi_v^\vee,\mu_v,\psi_v).
\end{equation}
The analytic construction of the local $\gamma$-factor shows that the doubling zeta integral satisfies the following local functional equation.
    \begin{thm}[{\cite{2005_Lapid-Rallis}}]\label{th_functional_eq_zeta_int}
        Let $\pi_v$ be an irreducible representation of $G_n(F_v)$ with central character $\omega_{\pi_v}$ and let $\mu_v$ be a character of $F^\times_v$.
        Take $w \in \pi_v$ and $w^\vee \in \pi^\vee_v$.
        For a meromorphic section $f_{v,s}$ of $I_{v,2n}(s,\mu_v)$, the following functional equation holds:
        \begin{align*}
        \frac{Z_v(-s,M^\dagger_{v,w_{2n},s}f_{v,s}, w,w^\vee)}{L_v(1/2-s, \pi_v, \mu_v^{-1})} &= \omega_{\pi_v}(-1)\vep_v(s+1/2, \pi_v^\vee, \mu_v, \psi_{v,\bdelta_v^{-1}}) \frac{Z_v(s,f_{v,s}, w,w^\vee)}{L_v(s+1/2, \pi_v^\vee, \mu_v)}.
        \end{align*}
    \end{thm}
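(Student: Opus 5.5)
This is a reformulation of the defining property of the Lapid--Rallis $\gamma$-factor, so the plan is to deduce it from the functional equation in \cite[Theorem 4]{2005_Lapid-Rallis} and then translate normalizations. In Lapid--Rallis the $\gamma$-factor is defined by
\[
Z'(M^*(s)f_s, w, w^\vee) = \Gamma(s,\pi_v,\mu_v,\psi_v)\, Z'(f_s,w,w^\vee),
\]
where $M^*$ is the intertwining operator normalized so that it is compatible with the degenerate Whittaker functional $\ell_{\psi_v,B}$. The factor $\Gamma$ is then $\gamma_v(s+1/2,\pi_v,\mu_v,\psi_v)$ up to the central character sign $\omega_{\pi_v}(-1)$ and a $\mu_v(-1)$ type constant.

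\textbf{Step 1 (the intertwining operator).} I would check that $M^\dagger_{v,w_{2n},s}$ coincides with the Lapid--Rallis normalized operator, possibly up to an explicit constant. By \eqref{FE_Siegel_series} and Theorem~\ref{Whittaker_Int_op}, $M^\dagger$ multiplies $\ell_{\psi_{v,\bdelta_v^{-1}},B}$ by
\[
\mu_v(\det B)^{-1}|\det B|_v^{-s}\,\mu_v(2)^{-2n}|2|_v^{-2ns}\,\vep_v(1/2,\chi_{D_B},\psi)^{-1}\gamma_v(s+1/2,\mu_v\chi_{D_B},\psi).
\]
This is exactly the type of normalization Lapid--Rallis impose, modulo the conjugation of the additive character and the different Haar measure on $N_{2n}$ already noted in the proof of Theorem~\ref{Whittaker_Int_op}.

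\textbf{Step 2 (the zeta integrals).} I would pass from $Z'$ to $Z_v$ via the preceding Remark, $Z(s,r(h_\bullet)f_s,w,w^\vee)=Z'(f_s,w,w^\vee)$. Since $M^\dagger$ commutes with right translation, the functional equation transfers verbatim to $Z_v$.

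\textbf{Step 3 (the $L$- and $\vep$-factors).} I would insert $\gamma_v(s+1/2,\pi_v^\vee,\mu_v,\psi)=\vep_v\, L_v(1/2-s,\pi_v,\mu_v^{-1})/L_v(s+1/2,\pi_v^\vee,\mu_v)$ and use the self-duality \eqref{selfduality_local_factor} to move $\pi_v$ and $\pi_v^\vee$ freely. Dividing both sides by $L_v(1/2-s,\pi_v,\mu_v^{-1})$ then gives the stated identity. The sign $\omega_{\pi_v}(-1)$ comes from the relation between $\delta$ and $\delta^{-1}$ together with the $\check g$ convention: replacing $g$ by $-g$ inside the integral produces $\pi_v(-1)=\omega_{\pi_v}(-1)$.

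The main obstacle is Step 1 together with this bookkeeping of constants. I have to verify that the normalization of Lapid--Rallis, stated with $\psi$ rather than $\psi_{v,\bdelta_v^{-1}}$ and with their own measures and sign conventions, agrees exactly with $C(s,\mu_v,\psi_{v,\bdelta_v^{-1}})$, so that no stray factor such as $\mu_v(-1)$, $|\bdelta_v|$-powers, or $\mu_v(4)$ survives. I would first fix these conventions on the unramified case with $\pi_v$ spherical and $f_{v,s}$ spherical. There both sides are computable by the Piatetski-Shapiro--Rallis unramified calculation, and the $\gamma_v$ of $\pi_v$ is an explicit product of Tate $\gamma$-factors, which pins down any constant. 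Uniqueness in \cite[Theorem 4]{2005_Lapid-Rallis}, via multiplicativity and globalization, then extends the normalization to all $\pi_v$.
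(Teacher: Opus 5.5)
Your reduction is the one the paper intends. The paper gives no argument beyond quoting \cite{2005_Lapid-Rallis}: identify $M^\dagger_{v,w_{2n},s}$ with the Lapid--Rallis normalized operator, move between zeta-integral conventions by right translation, and rewrite $\gamma=\vep L/L$ using \eqref{selfduality_local_factor}. The trouble is the constant bookkeeping, which you rightly call the main obstacle.

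\textbf{The sign $\omega_{\pi_v}(-1)$.} It does not come from switching between $\delta$ and $\delta^{-1}$.
\begin{itemize}
\item One has $\delta^{-1}=D\delta D$ with $D=\iota(-1_{2n},1_{2n})=\diag(-1_n,1_n,-1_n,1_n)\in P_{2n}$ and $\Delta(D)=(-1)^n$.
\item After $g\mapsto -g$, the $\delta^{-1}$-integral equals the integral $\int f(\delta\,\iota(g,1_{2n}))(\pi_v(g)w,w^\vee)\,dg$ times $\mu_v(-1)^n\omega_{\pi_v}(-1)$ on $I_{v,2n}(s,\mu_v)$. On $I_{v,2n}(-s,\mu_v^{-1})$ the factor is $\mu_v^{-1}(-1)^n\omega_{\pi_v}(-1)$.
\item These two factors are equal, so the switch cancels out of the functional equation. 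Also, $\check g$ never enters $Z_v$.
\end{itemize}
So your Step~3 mechanism is not a valid source of the sign. It has to be taken from the Lapid--Rallis definition of $\gamma$ in terms of their local coefficient, as your opening paragraph already says.

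\textbf{Calibrating on spherical data cannot pin down the constants you worry about.}
\begin{itemize}
\item For spherical $\pi_v$ and unramified $\mu_v$ one has $\omega_{\pi_v}(-1)=\mu_v(-1)=1$, and $\mu_v(4)=|2|_v=1$ for $v\nmid 2$. So neither the factor $\omega_{\pi_v}(-1)$ in the statement nor a stray $\mu_v(-1)^n$ is visible there.
\item More seriously, the discrepancy between two normalizations of $M_{v,w_{2n},s}$ depends only on $(s,\mu_v,\psi_v)$. Knowing it for unramified $\mu_v$ says nothing about ramified $\mu_v$, which is exactly the case the paper uses in Lemma~\ref{lem_ram_zeta_int}.
\item Your appeal to uniqueness in \cite[Theorem 4]{2005_Lapid-Rallis} only extends in $\pi_v$, so it cannot fix this. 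It would also require verifying all the characterizing properties for your candidate family, which amounts to redoing Lapid--Rallis.
\end{itemize}

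\textbf{What works instead.} Compare the normalizations directly. Compose $\ell_{\psi_{v,\bdelta_v^{-1}},B}$ with both $M^\dagger_{v,w_{2n},s}$ and the Lapid--Rallis operator, using Theorem~\ref{Whittaker_Int_op} on one side and their defining normalization on the other. Track explicitly $\psi$ versus $\overline{\psi}$ and the Haar measure on $N_{2n}(F_v)$. This yields the ratio as a $\pi_v$-independent function, with no calibration needed.
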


\subsection{Coincidence with the standard local factors}\label{LR_vs_std}
    
Since the local Langlands correspondence for symplectic groups has been established \cite{2013_Arthur}, it is natural to ask whether the local factors of Lapid-Rallis defined in \S\ref{LR_local_factors} coincide with the standard factors defined via the $L$-parameter. If $\pi_v$ and $\mu_v$ are unramified, then the Lapid-Rallis $L$ and $\varepsilon$-factors are defined to be the standard factors:
\[
L_v(s, \pi_v,\mu_v) = L_v(s, \pi_v, \mu_v, {\rm std}), \quad   \varepsilon_v(s, \pi_v, \mu_v, \psi_v) = \varepsilon_v(s, \pi_v, \mu_v, \psi_v, {\rm std}).
\] 
In this subsection, we verify this coincidence for arbitrary irreducible representations.

Let $K$ be a local field of characteristic $0$, let $\pi$ be an irreducible representation of $G_{n}(K)$, let $\mu$ be a character of $K^\times$, and let $\psi$ be a nontrivial additive character of $K$. By the Langlands classification, both the Lapid-Rallis factors and the standard factors for general $\pi$ are determined by those of essentially tempered representations via parabolic induction. Thus, we may assume that $\pi$ is tempered. In this case, both factors $L(s, \pi, \mu)$ (resp. $\varepsilon(s, \pi, \mu, \psi)$) and $L(s, \pi, \mu, {\rm std})$ (resp. $\varepsilon(s, \pi, \mu, \psi, {\rm std})$) can be reconstructed from the local $\gamma$-factors
\begin{align*}
\gamma(s, \pi, \mu, \psi) 
&= \frac{L(1-s, \pi^\vee,\mu^{-1})}{L(s, \pi,\mu)}\varepsilon(s, \pi, \mu, \psi), \\
\gamma(s, \pi, \mu, \psi, {\rm std}) 
&= \frac{L(1-s, \pi^\vee, \mu^{-1}, {\rm std})}{L(s, \pi, \mu, {\rm std})}\varepsilon(s, \pi, \mu, \psi, {\rm std}).
\end{align*}
One can show that both $\gamma$-factors $\gamma(s, \pi, \mu, \psi)$ and $\gamma(s, \pi, \mu, \psi, {\rm std})$ have the same multiplicativity under parabolic induction from discrete series representations. 
Hence, it suffices to show  
\begin{align}\label{gamma, LR = std} 
 \gamma(s, \pi, \mu, \psi) =  \gamma(s, \pi, \mu, \psi, {\rm std})  
 \end{align} 
when $\pi$ is an irreducible discrete series representation. 
For archimedean places, the equation \eqref{gamma, LR = std} was proved in \cite[\S9]{2005_Lapid-Rallis}.
For nonarchimedean places, this follows from the standard global argument using Arthur's globalization result \cite[Lemma 6.2.2]{2013_Arthur}.

\subsection{Computations at nonarchimedean places}
\label{subsec_local_zeta_int_nonarch}

    Let $\pi_v$ be an irreducible representation of $G_n(F_v)$ and $\mu_v$ be a character of $F_v^\times$.
    Let $f_{v,s}$ be a holomorphic section of $I_{v,2n}(s,\mu_v)$.
    We first consider the unramified case, i.e., $f_{v,s}, \pi_v$ and $\mu_v$ are unramified.
    Take unramified vectors $w \in \pi_v$ and $w^\vee \in \pi_v^\vee$. 
    Since the $\vep$-factor coincides with the Langlands $\vep$-factor, we have $\vep_v(s,\pi_v,\mu_v,\psi_{v,\bdelta^{-1}}) = 1$.
    We have the following well-known formula (cf. \cite[Theorem 1.2]{1986_PS-R}, \cite[Proposition 3]{2005_Lapid-Rallis} and \cite[Theorem 19.8]{00_Shimura}).

    \begin{thm}
    \label{thm_unr_zeta_int}
        Let $f_{v,s}$ be an unramified standard section of $I_{v,2n}(s,\mu_v)$ with $f_{v,s}(1_{4n}) = 1$.
 Then
        \[
        Z_v(s,f_{v,s}, w,w^\vee)
        =
        \frac{L_v(s+1/2, \pi_v^\vee,\mu_v)}{b_{v,2n}(s, \mu_v)} (w,w^\vee).
        \]
    \end{thm}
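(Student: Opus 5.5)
We reduce the statement to the classical unramified computation of Piatetski-Shapiro and Rallis, adapted to our normalization of the doubling embedding (with $\delta^{-1}$ instead of the Piatetski-Shapiro--Rallis element $h_\bullet$) and of the measures. Since $f_{v,s}$ is $K_v$-spherical and $w, w^\vee$ are unramified, the vector $T_v(s,f_{v,s})(w)$ is $K_v$-invariant. Indeed, for $k\in K_v$ one has $\iota(k,1)$ and also $\delta^{-1}\iota(k,k^\ast)\delta$ with $k^\ast=\check{k}$ up to the conjugation built into $\delta$; the point is that $f_{v,s}(\delta^{-1}\iota(gk,1))=f_{v,s}(\delta^{-1}\iota(g,1)\iota(k,1))=f_{v,s}(\delta^{-1}\iota(g,1))$ by right $K_{2n}$-invariance, because $\iota(k,1)\in G_{2n}(\calO_v)$. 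Hence $T_v(w)$ is a multiple of $w$, and
\[
Z_v(s,f_{v,s},w,w^\vee)=(w,w^\vee)\int_{G_n(F_v)} f_{v,s}(\delta^{-1}\iota(g,1))\,\omega(g)\,dg,
\]
where $\omega(g)=(\pi_v(g)w,w^\vee)/(w,w^\vee)$ is the normalized zonal spherical function. So it suffices to evaluate the scalar integral.

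Next we make $g\mapsto f_{v,s}(\delta^{-1}\iota(g,1))$ explicit. It is bi-$K_v$-invariant: right invariance was just shown, and left invariance follows from $\delta^{-1}\iota(k,\check{k})\delta\in P_{2n}(\calO_v)$ together with $\iota(1,\check{k}^{-1})\in K_{2n}$. It is therefore determined by its values on the Cartan representatives $t=\diag(\varpi^{a_1},\dots,\varpi^{a_n},\varpi^{-a_1},\dots,\varpi^{-a_n})$ with $a_1\ge\dots\ge a_n\ge0$. For each such $t$ we perform an Iwasawa decomposition of $\delta^{-1}\iota(t,1)$ in $G_{2n}(F_v)$, which is essentially a $2\times2$ computation in each coordinate $i$ (together with the symmetric $i\leftrightarrow n+i$ pattern). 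This gives $|\Delta(p)|_v=\prod_i q_v^{-a_i}$, so
\[
f_{v,s}(\delta^{-1}\iota(t,1))=\mu_v(\varpi)^{\sum a_i}\,q_v^{-(s+n+1/2)\sum a_i}
\]
up to the unit $\mu_v$-values, which are trivial because $\mu_v$ is unramified. We then write
\[
Z=\sum_a \vol(KtK)\,\omega(t)\,f(\delta^{-1}\iota(t,1)).
\]

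To evaluate this sum we expand $\omega$ by Macdonald's formula in terms of the Satake parameters $\alpha_1,\dots,\alpha_n$ of $\pi_v$, and sum the resulting geometric series. This is the main obstacle, and we do not expect to redo it by hand. Instead we compare with the known evaluation in \cite[Theorem 1.2]{1986_PS-R} (equivalently \cite[Proposition 3]{2005_Lapid-Rallis}), which gives
\[
\frac{L(s+1/2,\pi_v\otimes\mu_v,\std)}{b_{v,2n}(s,\mu_v)}.
\]
By the Remark, our integral equals the Piatetski-Shapiro--Rallis integral for the translated section $r(h_\bullet)f_s$. For unramified $v$ the element $h_\bullet\delta$ differs from $\delta$ only by elements of $P_{2n}(F_v)$ with unit modulus character and elements of $K_{2n}$, since $2$ is a unit or is absorbed in the $|2|$ factors, which equal $1$ for odd residue characteristic. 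We still need to check the case where the residue characteristic is $2$ directly via the Cartan computation above. The remaining bookkeeping consists of two checks. First, the Haar measure $dg_v$ with $\vol(K_v)=1$ must match the one used there. Second, $L_v(s+1/2,\pi_v^\vee,\mu_v)=L_v(s+1/2,\pi_v,\mu_v)$ by \eqref{selfduality_local_factor}, and this agrees with the standard factor by \S\ref{LR_vs_std}. These two checks yield the stated formula.
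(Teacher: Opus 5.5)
Your overall route matches the paper's. The paper gives no argument beyond citing the classical unramified computation \cite[Theorem 1.2]{1986_PS-R}, \cite[Proposition 3]{2005_Lapid-Rallis}, \cite[Theorem 19.8]{00_Shimura}. Your bi-$K_v$-invariance argument and your Cartan evaluation $f_{v,s}(\delta^{-1}\iota(t_a,1))=(\mu_v(\varpi_v)q_v^{-s-n-1/2})^{\sum_i a_i}$ are correct. One small slip: $K_v$-invariance of $T_v(s,f_{v,s})(w)$ needs the \emph{left} invariance you prove in your second paragraph, not the right invariance you invoke first. In any case, the scalar identity $Z_v=(w,w^\vee)\int f_{v,s}(\delta^{-1}\iota(g,1))\,\omega(g)\,dg$ needs neither.

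The gap is in the transfer to Piatetski-Shapiro--Rallis. The Remark identifies your integral with theirs for the spherical section only if $r(h_\bullet)f_{v,s}=f_{v,s}$, that is, only if $h_\bullet\in G_{2n}(\calO_v)$. Since $h_\bullet$ is symplectic with entries in $\{0,\pm1,\pm\tfrac12\}$, this holds exactly when $v\nmid 2$. At $v\mid 2$ the translate $r(h_\bullet)f_{v,s}$ is a different, non-spherical section. The statement contains no $|2|_v$ factors that could absorb a discrepancy, so your sentence about the $|2|$ factors is not an argument (those factors equal $1$ precisely when $2$ is a unit). You then defer the dyadic case to your Cartan computation, but you explicitly do not carry out its decisive step, the Macdonald summation. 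Every totally real $F$ has places over $2$, and the theorem is used at every $v\notin S$. So this case is genuinely needed, and your proposal leaves it unproved.

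Either of two repairs closes the gap.
\begin{enumerate}
\item Cite \cite[Theorem 19.8]{00_Shimura}. As far as I recall, it uses an integral doubling element of the same shape as $\delta$ (note $\delta\in G_{2n}(\calO_v)$ for every $v$) and imposes no condition on the residue characteristic; this is presumably why the paper cites it alongside \cite{1986_PS-R}.
\item Finish your own argument formally. Your evaluation of $f_{v,s}(\delta^{-1}\iota(t_a,1))$ holds verbatim at dyadic places. Moreover, $[K_vt_aK_v:K_v]$ and Macdonald's formula for $\omega(t_a)$ depend only on $q_v$ and the Satake parameters. Put $X=\mu_v(\varpi_v)q_v^{-s-n-1/2}$. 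Then for each $m$, the coefficients of $X^m$ on both sides are rational functions of $q_v^{1/2}$ whose coefficients are Laurent polynomials in the Satake parameters. They agree for all odd prime powers $q_v$, so they agree identically, in particular for $q_v=2^f$.
\end{enumerate}
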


    In the general case, take $m_v \in \bbZ_{>0}$ so that $\pi_v^{K_v(\frakp_v^{m_v})} \neq 0$ and $\mu_v|_{1+\frakp_v^{m_v}} = \mathbf{1}$.
    We also allow the case in which $\pi_v$ and $\mu_v$ are unramified.
    We define the unique section $\bff_{v,s}$ of $I_{v,2n}(s,\mu_v)$ by
    \[
    \bff_{v,s}(g) = 
    \begin{cases}
        0 & \text{if $g \not \in P_{2n}(F_v)K_v(\frakp^{m_v})$;}\\
        \mu_v(\Delta(p))|\Delta(p)|_v^{s+n+1/2} & \text{if $g = pk \in P_{2n}(F_v)K_v(\frakp_v^{m_v})$.}
    \end{cases}
    \]
    When we emphasize that $\bff_{v,s}$ is a section of $I_{v,2n}(s,\mu_v)$, we write it as $\bff_{v}(s,\mu_v)$. 
    The section $\bff_{v,s}$ is essentially the same as the one constructed in \cite[(16.26)]{00_Shimura}.

    \begin{lem}\label{lem_zeta_int_ramified_big_cell}
        For $\tau_v \in \calO_{v}^\times, w \in \pi_v^{K_v(\frakp_v^{m_v})}$ and $w^\vee \in \pi^{\vee, K_v(\frakp_v^{m_v})}$, we have
        \[
        T_v(s,r(\delta_{\tau_v})\bff_{v,s})(w) = \vol(K_v(\frakp_v^{m_v})) \cdot \mu_v(\tau_v)^{n}\pi_v(\diag(\tau_v \cdot 1_n, \tau_v^{-1}\cdot 1_n))w
        \]
        and
        \[
        Z_v(s,r(\delta_{\tau_v})\bff_{v,s}, w,w^\vee) = \vol(K_v(\frakp_v^{m_v})) \cdot\mu_v(\tau_v)^{n}(\pi_v(\diag(\tau_v \cdot 1_n, \tau_v^{-1}\cdot 1_n))w,w^\vee).
        \]
    \end{lem}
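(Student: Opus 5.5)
The plan is to compute the integrand $r(\delta_{\tau_v})\bff_{v,s}(\delta^{-1}\iota(g,1_{2n}))=\bff_{v,s}(\delta^{-1}\iota(g,1_{2n})\delta_{\tau_v})$ explicitly and to show that it is supported on a single coset. The claimed formula for $Z_v$ follows from the one for $T_v$ by pairing with $w^\vee$, so it suffices to treat $T_v$.

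\textbf{Step 1: a factorization.} Note that $\delta_{\tau_v}=\iota(1_{2n},m_\tau)\,\delta\,\iota(1_{2n},m_\tau)^{-1}$ with $m_\tau=\diag(\tau_v^{-1}1_n,\tau_v 1_n)$, or an analogous conjugation; this should be checked by a direct block computation. Rather than rely on it, I will compute directly. Write $g=\abcd$. One computes
\[
\delta^{-1}\iota(g,1_{2n})\delta_{\tau_v}
=\left(\begin{array}{cc|cc} a & 0 & b & 0\\ \tau_v & 1 & 0&0 \\ \hline c-\tau_v & 0 & d & -1\\ -a & -1 & -b & 1\end{array}\right)
\]
up to the precise signs, which I will verify from $\delta^{-1}$ (the lower-left block negated). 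The goal is to describe when this matrix lies in $P_{2n}(F_v)K_v(\frakp_v^{m_v})$.

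\textbf{Step 2: the support condition.} An element $h=\begin{pmatrix}A&B\\C&D\end{pmatrix}\in G_{2n}(F_v)$ lies in $P_{2n}K_v(\frakp^{m_v})$ exactly when $D$ is invertible and $D^{-1}C\in \Mat_{2n}(\frakp_v^{m_v})$; in that case we may write $h=p\,\bfn^-(D^{-1}C)$ with $p=\begin{pmatrix}{^tD^{-1}}&B\\0&D\end{pmatrix}$. Applying this to the matrix of Step 1, with $D=\begin{pmatrix} d&-1\\-b&1\end{pmatrix}$ and $C=\begin{pmatrix} c-\tau_v&0\\-a&-1\end{pmatrix}$, forces the entries of $D^{-1}C$ to be close to prescribed values. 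This pins $g$ down to $g\in \diag(\tau_v 1_n,\tau_v^{-1}1_n)K_v(\frakp_v^{m_v})$, or its inverse depending on conventions; the sign/inverse ambiguity is fixed by comparison with the claimed formula. On this set one computes $\Delta(p)=\det({^tD^{-1}})$, which comes out to $\tau_v^{\pm n}$ times a unit congruent to $1$. Hence the integrand equals $\mu_v(\tau_v)^n$, using $|\tau_v|_v=1$ and the triviality of $\mu_v$ on $1+\frakp_v^{m_v}$.

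\textbf{Step 3: integrating.} The integral of $\pi_v(g)w$ over $\diag(\tau_v,\tau_v^{-1})K_v(\frakp^{m_v})$ equals $\vol(K_v(\frakp_v^{m_v}))\,\pi_v(\diag(\tau_v1_n,\tau_v^{-1}1_n))w$, since $w$ is $K_v(\frakp_v^{m_v})$-fixed. The integrand has compact support, so there are no convergence issues and the identity holds for all $s$.

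\textbf{Main obstacle.} The main work is in Step 2: solving the congruence $D^{-1}C\equiv 0$ and showing that it forces both $a\equiv\tau_v$ and $b,c$ to be small, and that $d^{-1}$ is correspondingly controlled. I will do this by first inverting the $2\times2$ block matrix $D$, then reading off the blocks of $D^{-1}C$ one at a time, and finally using the symplectic relations on $g$ to recover the remaining block.
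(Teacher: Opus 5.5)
Your overall plan is sound and is in substance the paper's argument. The paper substitutes $g\mapsto\diag(\tau_v 1_n,\tau_v^{-1}1_n)g$ to extract $\mu_v(\tau_v)^n$, then shows that $\delta_{\tau_v}^{-1}\iota(g,1_{2n})\delta_{\tau_v}\in P_{2n}(F_v)K_v(\frakp_v^{m_v})$ forces $g\in K_v(\frakp_v^{m_v})$. Your support criterion (namely $D$ invertible and $D^{-1}C\in\Mat_{2n}(\frakp_v^{m_v})$, with $h=p\,\bfn^-(D^{-1}C)$ and $\Delta(p)=\det(D)^{-1}$) is correct. So is the conjugation identity $\delta_{\tau_v}=\iota(1_{2n},m_\tau)\,\delta\,\iota(1_{2n},m_\tau)^{-1}$ that you set aside.

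The gap is in Steps 1--2. The matrix you write is not $\delta^{-1}\iota(g,1_{2n})\delta_{\tau_v}$, and it differs in block structure, not just in signs. As a result, Step 2 is never actually derived: the coset is read off from the statement (``fixed by comparison with the claimed formula''), which assumes what is to be proved. With your blocks the analysis would in fact contradict the claim. Your $D=\begin{pmatrix} d&-1_n\\-b&1_n\end{pmatrix}$ is invertible iff $d-b$ is, and the $(1,2)$-block of $D^{-1}C$ is $-(d-b)^{-1}$. Requiring this block to lie in $\Mat_n(\frakp_v^{m_v})$ fails for every $g\in\diag(\tau_v 1_n,\tau_v^{-1}1_n)K_v(\frakp_v^{m_v})$, since for such $g$ we have $d-b\in\GL_n(\calO_v)$.

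Here is the correct computation. Right multiplication by $\delta_{\tau_v}$ adds $\tau_v$ times the fourth block column to the first and $\tau_v$ times the third to the second. Left multiplication by $\delta^{-1}$ subtracts the second block row from the third and the first from the fourth. This gives
\[
\delta^{-1}\iota(g,1_{2n})\delta_{\tau_v}=\left(\begin{array}{cc|cc} a & \tau_v b & b & 0\\ 0 & 1_n & 0 & 0\\ \hline c & \tau_v d-1_n & d & 0\\ \tau_v 1_n-a & -\tau_v b & -b & 1_n\end{array}\right),
\]
so $D=\begin{pmatrix} d&0\\-b&1_n\end{pmatrix}$ and
\[
D^{-1}C=\begin{pmatrix} d^{-1}c & \tau_v-d^{-1}\\ \tau_v-a+bd^{-1}c & -bd^{-1}\end{pmatrix}.
\]
All congruences below are modulo $\frakp_v^{m_v}$.
\begin{itemize}
\item The $(1,2)$-block gives $d^{-1}\equiv\tau_v$, hence $d\in\GL_n(\calO_v)$ with $d\equiv\tau_v^{-1}$.
\item Then $c=d\cdot d^{-1}c\equiv 0$ and $b=bd^{-1}\cdot d\equiv 0$.
\item Finally $a\equiv\tau_v+bd^{-1}c\equiv\tau_v$.
\end{itemize}
So the support is exactly $\diag(\tau_v 1_n,\tau_v^{-1}1_n)K_v(\frakp_v^{m_v})$; the converse inclusion is immediate. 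All four blocks of $g$ are determined without using the symplectic relations. On this set $\det D=\det d\in\tau_v^{-n}(1+\frakp_v^{m_v})$, which gives the integrand value $\mu_v(\tau_v)^n$. With this correction your Step 3 finishes the proof. The result is the paper's argument, with the support analysis (which the paper delegates to a cited lemma) written out explicitly.
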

    \begin{proof}
        By definition, we have
        \begin{align*}
        &T_v(s,r(\delta_{\tau_v})\bff_{v,s})(w)\\
        &=
        \int_{G_n(F_v)}
        \bff_{v,s}(\delta^{-1} \cdot \iota(g,1_{2n})\delta_{\tau_v})
        \pi_v(g)w \, dg\\
        &=
        \int_{G_n(F_v)}
        \bff_{v,s}(\diag(\tau_v \cdot 1_n,1_n, \tau_v^{-1}\cdot 1_n,1_n)\delta^{-1}_{\tau_v} \cdot \iota(g,1_{2n})\delta_{\tau_v})
        \pi_v(\diag(\tau_v \cdot 1_n, \tau_v^{-1}\cdot 1_n)g)w \, dg\\
        &=
        \mu_v(\tau_v)^{n}\int_{G_n(F_v)}
        \bff_{v,s}(\delta^{-1}_{\tau_v} \cdot \iota(g,1_{2n})\delta_{\tau_v})
        \pi_v(\diag(\tau_v \cdot 1_n, \tau_v^{-1}\cdot 1_n)g)w \, dg.
        \end{align*}
        We determine the condition on $g$ under which
        \[
        \delta^{-1}_{\tau_v} \cdot \iota(g,1_{2n})\delta_{\tau_v} \in \Supp(\bff_{v,s}) = P_{2n} K_v(\frakp_v^{m_v}).
        \]
        By 
        \begin{align}\label{def_ad_delta_iota}
        \delta^{-1}_{\tau_v} \cdot \iota(g,1_{2n})\delta_{\tau_v} = 
        \left(
        \begin{array}{cc|cc}
            a & b\tau_v & b & 0_n\\
            0_n & 1_n & 0_n&0_n\\
            \hline
            c & (d-1_n)\tau_v & d & 0_n\\
            \tau_v(1_n-a) & -\tau_v^2 b & -\tau_v b & 1_n
        \end{array}\right), \qquad
        g = \abcd,
        \end{align}
        and the explicit description of the matrices in $P_{2n}(F_v)K_v(\frakp_v^{m_v})$, the lower right $2n \times 2n$-block of \eqref{def_ad_delta_iota} is invertible.
        Moreover, by the same argument as in \cite[Lemma 4.2]{21_PSS_algebraicity}, the matrix $g$ lies in $K_v(\frakp_v^{m_v})$.
        Hence, we obtain
        \begin{align*}
        T_v(s,r(\delta_{\tau_v})\bff_{v,s})(w)
        &=
        \mu_v(\tau_v)^{n}
        \pi_v(\diag(\tau_v \cdot 1_n, \tau_v^{-1}\cdot 1_n))\left(
        \int_{K_v(\frakp_v^{m_v})}
        \bff_{v,s}(\delta^{-1}_{\tau_v} \cdot \iota(g,1_{2n})\delta_{\tau_v})
        \pi_v(g)w\, dg\right)\\
        &=\vol(K_v(\frakp_v^{m_v})) \cdot\mu_v(\tau_v)^{n}\pi_v(\diag(\tau_v \cdot 1_n, \tau_v^{-1}\cdot 1_n))w.
        \end{align*}
        In the last line, for $g \in K_v(\frakp_v^{m_v})$, using the description \eqref{def_ad_delta_iota}, we obtain $\bff_{v,s}(\delta_{\tau_v}^{-1} \cdot \iota(g,1_{2n})\delta_{\tau_v}) = 1$.
        The assertion for $Z_v(s,r(\delta_{\tau_v})\bff_{v,s}, w,w^\vee)$ follows from the definition.
    \end{proof}

    Define a meromorphic section $\bfh_{v,s}$ of $I_{v,2n}(s,\mu_v)$ by
    \[
    \bfh_{v,s} = M_{v,w_{2n},-s} (\bff_{v}(-s,\mu_v^{-1})).
    \]
    % We summarize the key properties of $\bfh_{v,s}$ as follows:
    \begin{lem}\label{lem_ram_zeta_int}
        With the above notation, we have
        \begin{enumerate}
            \item $\bfh_{v,s}$ lies in the kernel of $M^*_{v,w_{2n},s}$ at $s=1/2, 3/2,\ldots,n-1/2$ if $\mu_v^2 = \mathbf{1}$.
            \item For $\tau_v \in \calO_{v}^\times, w \in \pi_v^{K_v(\frakp_v^{m_v})}$ and $w^\vee \in \pi_v^{\vee, K_v(\frakp_v^{m_v})}$, we have
            \begin{align*}
        &T_v(s,r(\delta_{\tau_v})\bfh_{v,s})(w) \\
        &= \vol(K_v(\frakp_v^{m_v})) \cdot\gamma_v(-s-n+1/2, \mu_v^{-1}, \psi_{v,\bdelta_v^{-1}})^{-1}
        \prod_{j=1}^{n}\gamma_v(-2s-2n+2j, \mu_v^{-2}, \psi_{v,\bdelta_v^{-1}})^{-1}\\
        &\qquad\cdot \omega_{\pi_v}(-1)\gamma_v(-s+1/2, \pi_v^\vee, \mu_v^{-1}, \psi_{v,\bdelta_v^{-1}}) \mu_v(\tau_v)^{-n}\pi_v(\diag(\tau_v\cdot 1_n, \tau_v^{-1}\cdot 1_n))w
        \end{align*}
        and
            \begin{align*}
        &Z_v(s,r(\delta_{\tau_v})\bfh_{v,s},w,w^\vee) \\
        &= \vol(K_v(\frakp_v^{m_v})) \cdot\gamma_v(-s-n+1/2, \mu_v^{-1}, \psi_{v,\bdelta_v^{-1}})^{-1}
        \prod_{j=1}^{n}\gamma_v(-2s-2n+2j, \mu_v^{-2}, \psi_{v,\bdelta_v^{-1}})^{-1}\\
        &\qquad\cdot \omega_{\pi_v}(-1)\gamma_v(-s+1/2, \pi_v^\vee, \mu_v^{-1}, \psi_{v,\bdelta_v^{-1}}) \mu_v(\tau_v)^{-n}(\pi_v(\diag(\tau_v\cdot 1_n, \tau_v^{-1}\cdot 1_n))w, w^\vee).
        \end{align*}
        \end{enumerate}
    \end{lem}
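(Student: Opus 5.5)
Both parts rest on the normalized operator $M^\dagger_{v,w_{2n},s}$ and its two properties: the inversion relation $M^\dagger_{v,w_{2n},s}\circ M^\dagger_{v,w_{2n},-s} = \mu_v(-1)\,\mathrm{Id}$ and the local functional equation of Theorem \ref{th_functional_eq_zeta_int}. For (2) the idea is to write
\[
\bfh_{v,s} = M_{v,w_{2n},-s}\bff_v(-s,\mu_v^{-1}) = C(-s,\mu_v^{-1},\psi_{v,\bdelta_v^{-1}})\, M^\dagger_{v,w_{2n},-s}\bff_v(-s,\mu_v^{-1}).
\]
The explicit formula for $C$ after Theorem \ref{Whittaker_Int_op} then produces exactly the product
\[
\gamma_v(-s-n+1/2,\mu_v^{-1},\psi_{v,\bdelta_v^{-1}})^{-1}\prod_{j=1}^n\gamma_v(-2s-2n+2j,\mu_v^{-2},\psi_{v,\bdelta_v^{-1}})^{-1}
\]
that appears in the statement.

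Next I apply Theorem \ref{th_functional_eq_zeta_int}, with $s$ replaced by $-s$ and $\mu_v$ by $\mu_v^{-1}$, to the section $r(\delta_{\tau_v})\bff_v(-s,\mu_v^{-1})$. The intertwining operator commutes with right translation, so $r(\delta_{\tau_v})\bfh_{v,s} = M_{v,w_{2n},-s}\,r(\delta_{\tau_v})\bff_v(-s,\mu_v^{-1})$. This gives
\[
Z_v(s,M^\dagger_{-s}r(\delta_{\tau_v})\bff_v,w,w^\vee) = \omega_{\pi_v}(-1)\,\vep_v(-s+1/2,\pi_v^\vee,\mu_v^{-1},\psi_{v,\bdelta_v^{-1}})\,\frac{L_v(1/2+s,\pi_v,\mu_v)}{L_v(-s+1/2,\pi_v^\vee,\mu_v^{-1})}\,Z_v(-s,r(\delta_{\tau_v})\bff_v,w,w^\vee).
\]
By the relation between $\gamma$, $\vep$ and $L$ together with selfduality \eqref{selfduality_local_factor}, the factor in front is $\omega_{\pi_v}(-1)\gamma_v(-s+1/2,\pi_v^\vee,\mu_v^{-1},\psi_{v,\bdelta_v^{-1}})$. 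Finally, Lemma \ref{lem_zeta_int_ramified_big_cell}, applied with $\mu_v^{-1}$ and $-s$, evaluates the remaining zeta integral as $\vol(K_v(\frakp_v^{m_v}))\mu_v(\tau_v)^{-n}(\pi_v(\diag(\tau_v,\tau_v^{-1}))w,w^\vee)$. This proves the $Z_v$ identity.

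The $T_v$ identity follows by varying $w^\vee$ over $\pi_v^\vee$. For this I need $T_v(s,r(\delta_{\tau_v})\bfh_{v,s})(w)$ to be $K_v(\frakp_v^{m_v})$-invariant, or at least to lie in a finite-dimensional space on which the pairing is nondegenerate. The invariance should come from the right $K_v(\frakp_v^{m_v})$-invariance of $\bff_v$, which is inherited by $\bfh_{v,s}$, after conjugating through $\delta_{\tau_v}$ and $\iota(\cdot,1)$. I also need to settle the issue of the meromorphic continuation of $T_v$, for instance by first working in the range of absolute convergence and continuing both sides.

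For (1), write $M^*_{v,w_{2n},s} = a_{v,2n}(s,\mu_v)^{-1}C(s,\mu_v,\psi_{v,\bdelta_v^{-1}})\,M^\dagger_{v,w_{2n},s}$. Then
\[
M^*_{v,w_{2n},s}\bfh_{v,s} = \mu_v(-1)\,a_{v,2n}(s,\mu_v)^{-1}C(s,\mu_v,\psi_{v,\bdelta_v^{-1}})\,C(-s,\mu_v^{-1},\psi_{v,\bdelta_v^{-1}})\,\bff_v(-s,\mu_v^{-1}),
\]
and the last section is holomorphic and standard. So everything reduces to showing that the scalar $a^{-1}C(s)C(-s)$ vanishes at $s=1/2,\dots,n-1/2$ when $\mu_v^2=\mathbf{1}$. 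I expect this bookkeeping to be the main obstacle. I would first expand the product of Tate $\gamma$-factors $\gamma(x,\mu^2)^{-1}\gamma(-x,\mu^{-2})^{-1}$ using $\gamma(x)\gamma(1-x)=\mu(-1)$, converting each into a ratio $\gamma(1+x)/\gamma(-x)$ of $L$-factors. With $\mu_v^2=\mathbf{1}$, one then isolates a factor $L_v(0,\mathbf{1})^{-1}$-type zero, or more precisely an uncancelled $L_v(1,\mathbf{1})^{-1}$ versus the pole of $a_{v,2n}$, and checks that at each half-integer $s$ in the range the zeros of $C(s)C(-s)$ exceed the poles of $a_{v,2n}(s,\mu_v)$ by at least one.

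If the scalar does not vanish identically, for example when $\mu_v$ itself is ramified, then $a_{v,2n}$ and the $\gamma$-factors are essentially epsilon factors. In that case I would instead use the Whittaker functionals: show that $\ell_{\psi_v,B}$ of the image vanishes for every nondegenerate $B$ through the factor $\gamma_v(s+1/2,\mu_v\chi_{D_B})$ in \eqref{FE_Siegel_series}, and handle degenerate $B$ separately.
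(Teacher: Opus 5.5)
Your proposal follows the paper's proof. For (2) you write $\bfh_{v,s}=C(-s,\mu_v^{-1},\psi_{v,\bdelta_v^{-1}})\,M^\dagger_{v,w_{2n},-s}\bff_v(-s,\mu_v^{-1})$, apply Theorem \ref{th_functional_eq_zeta_int} at $(-s,\mu_v^{-1})$, and finish with Lemma \ref{lem_zeta_int_ramified_big_cell}. For (1) you use $M^\dagger_{v,w_{2n},s}\circ M^\dagger_{v,w_{2n},-s}=\mu_v(-1)\,\mathrm{Id}$ to reduce to the vanishing of a scalar multiple of the holomorphic section $\bff_v(-s,\mu_v^{-1})$, exactly as the paper does. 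The $T_v$ identity needs no separate invariance argument. Lemma \ref{lem_zeta_int_ramified_big_cell} computes $T_v$ itself, so the $Z_v$ identity holds for every $w^\vee\in\pi_v^\vee$, and nondegeneracy of the pairing then gives the vector identity.

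The step you leave open in (1) is easy, but your sketch of it is inaccurate, and your worry about ramified $\mu_v$ is unfounded. Write each Tate factor as $\gamma_v(x,\chi)=\vep_v(x,\chi)L_v(1-x,\chi^{-1})/L_v(x,\chi)$. This gives $C(s,\mu_v,\psi_{v,\bdelta_v^{-1}})=e(s)\,a_{v,2n}(s,\mu_v)/b_{v,2n}(-s,\mu_v^{-1})$, where $e(s)$ is an invertible product of $\vep$-factors. Your scalar is therefore
\[
e(s)\,e'(s)\,\frac{a_{v,2n}(-s,\mu_v^{-1})}{b_{v,2n}(-s,\mu_v^{-1})\,b_{v,2n}(s,\mu_v)},
\]
which is the paper's expression. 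Now take $s=k-1/2$ with $1\le k\le n$:
\begin{itemize}
\item $1/b_{v,2n}(s,\mu_v)$ is entire.
\item $a_{v,2n}(-s,\mu_v^{-1})$ is finite, since its arguments are $1-k-n$ and odd negative integers.
\item $b_{v,2n}(-s,\mu_v^{-1})$ contains the factor $L_v(2n+1-2j-2s,\mu_v^{-2})$, which for $j=n+1-k$ equals $L_v(0,\mathbf{1})$, a pole.
\end{itemize}
Hence the scalar has a simple zero at each of these points.

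Since $a_{v,2n}(s,\mu_v)^{-1}$ is entire, comparing zeros of $C(s)C(-s)$ with poles of $a_{v,2n}$ is the wrong bookkeeping. For example, when $\mu_v=\mathbf{1}$ and $k=n$, the factor $\gamma_v(0,\mathbf{1})^{-1}$ inside $C(s,\mu_v,\psi_{v,\bdelta_v^{-1}})$ has a pole. That pole is cancelled by the zero of $a_{v,2n}(s,\mu_v)^{-1}$, and the actual zero comes from $\gamma_v(1,\mathbf{1})^{-1}$.

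Finally, $\mu_v^2=\mathbf{1}$ forces $L_v(\cdot,\mu_v^{\pm 2})=L_v(\cdot,\mathbf{1})$ even when $\mu_v$ is ramified; only the $L_v(\cdot,\mu_v)$ factors become $1$. So the zero is present in every case, and the Whittaker-functional fallback is unnecessary.
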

    \begin{proof}
        The functional equation shows that
        \[
        \mu_v(-1) \cdot \mathrm{Id} = M_{v,w_{2n},s}^\dagger \circ M_{v,w_{2n},-s}^\dagger =C(s) \frac{b_{v,2n}(-s,\mu_v^{-1})b_{v,2n}(s, \mu_v)}{a_{v,2n}(-s,\mu_v^{-1})}M_{v,w_{2n},s}^* \circ M_{v,w_{2n},-s}
        \]
        with some invertible function $C(s)$.
        Applying this to $\bff_{v}(-s,\mu_v^{-1})$ we obtain
        \[
        M_{v,w_{2n},s}^*\bfh_{v,s} = M_{v,w_{2n},s}^* \circ M_{v,w_{2n},-s}\bff_{v,-s} = \frac{\mu_v(-1)C(s)^{-1}a_{v,2n}(-s,\mu_v^{-1})}{b_{v,2n}(-s,\mu_v^{-1})b_{v,2n}(s,\mu_v)}\bff_{v}(-s,\mu_v^{-1}).
        \]
        This expression yields the first statement.
        Using the functional equation in Theorem \ref{th_functional_eq_zeta_int}, one has
        \begin{align*}
        &Z_v(s,\bfh_{v,s},w,w^\vee) 
        =
        Z_v(s, M_{v,w_{2n},-s}\bff(-s,\mu_v^{-1}), w,w^\vee)\\
        &=C(-s,\mu_v^{-1}, \psi_{v,\bdelta_v^{-1}})
         Z_v(s, M_{v,w_{2n},-s}^\dagger\bff(-s,\mu_v^{-1}), w,w^\vee)\\
         &=
         C(-s,\mu_v^{-1}, \psi_{v,\bdelta_v^{-1}})
         \omega_{\pi_v}(-1)\gamma_v(-s+1/2, \pi_v^\vee,\mu_v^{-1}, \psi_{v,\bdelta_v^{-1}})Z_v(-s, \bff_{v}(-s,\mu_v^{-1}), w,w^\vee).
        \end{align*}
The statements follow from Lemma \ref{lem_zeta_int_ramified_big_cell} and the definition of $Z_v(s,f_{v,s},w,w^\vee)$.
    \end{proof}

Related local computations for unitary groups at split $p$-adic places appear in \cite[Theorem 4.3.10]{2020_EHLS}. 
There, the authors use local Siegel-Weil sections and reduce the corresponding zeta integrals to Godement-Jacquet integrals. 
In contrast, our section $\bfh_{v,s}$ is constructed using the intertwining operator.
At the points in Lemma \ref{lem_ram_zeta_int}, it lies in the kernel of the intertwining operator and allows us to apply the Siegel-Weil formula in the proof of Lemma \ref{lem_holomorphy_Eis_ser}.

\subsection{Liu's result at archimedean places}

    We recall Liu's result \cite{2021_Zheng_Liu_arch_zeta_int} of the archimedean zeta integral for a suitable section of $I_{\infty, 2n}(s, \sgn^k)$.
    For $(k, \lambda) = (k,\lambda_1,\ldots,\lambda_n) \in \bbZ_{>0} \times \bbZ^n$ with $\lambda_1 \geq \cdots \geq \lambda_n \geq k > n$, consider a polynomial in the entries of $n \times n$ matrices
    \[
    \frakQ_{k,\lambda} = \prod_{j=1}^{n-1} {\det}_j^{\lambda_j - \lambda_{j+1}} \cdot {\det}^{\lambda_n-k},
    \]
    where $\det_j$ is the determinant of the upper left $j \times j$ block.
    Define an element of $\calU(\frakp_{2n,+})$ by
    \begin{equation}\label{def_diff_op}
    \frakQ_{k,\lambda} \left(\frac{1}{2\pi \sqrt{-1}}(X_{i+n,j})_{1 \leq i,j \leq n}\right)
    \end{equation}
    where $X_{i,j}$ is the fixed root vector with root $e_i + e_j$.
    This is well-defined since $\frakp_{2n,+}$ is commutative.
    Note that by multiplying by $2$, the definitions of the root vectors in \cite[\S A.8]{00_Shimura} and \cite{2020_Liu} are consistent.
    The present paper follows the definition in \cite{00_Shimura}, and the above definition yields the same element as in \cite{2020_Liu}.

    For an integer $k$, let $f_k$ be the weight $k$ vector in $I_{\infty, 2n}(k-(2n+1)/2, \sgn^k)$ defined by
    \[
    f_k\left(\begin{pmatrix}
        A&B\\
        C&D
    \end{pmatrix}\right)
    =
    \det(\sqrt{-1}\,C+D)^{-k}.
    \]
    We extend $f_k$ to the standard section of $I_{\infty,2n}(s,\sgn^k)$ and denote it by $f_{\infty,s,k}$.
    Let $f_{k,\lambda, s}$ be a standard section such that
    \[
    f_{k,\lambda, k-(2n+1)/2} = \frakQ_{k,\lambda} \left(\frac{1}{2\pi \sqrt{-1}}(X_{i+n,j})_{i,j}\right)f_{k}.
    \]
    Then, $f_{k,\lambda, k-(2n+1)/2}$ generates a lowest weight module with lowest weight $k$.
    Set
    \[
    \Gamma_0(s) = 1, \qquad \Gamma_m(s) = \pi^{m(m-1)/4}\prod_{j=0}^{m-1}\Gamma(s-j/2).
    \]
    Zheng Liu \cite{2021_Zheng_Liu_arch_zeta_int} computes the doubling integral for the section $f_{k,\lambda,s}$ at $s = k-n-1/2$.
    
    \begin{thm}[{\cite{2021_Zheng_Liu_arch_zeta_int}}]\label{thm_arch_zeta}
        Let $\calD_\lambda$ be a holomorphic discrete series representation of $G_n(\bbR)$ with lowest $K_\infty$-type $\lambda = (\lambda_1,\ldots,\lambda_n)$.
        Take a highest weight vector $v$ in the lowest $K$-type of $\calD_\lambda$ and its dual vector $v^\vee$ in the contragredient representation.
        We then have
        \[
        Z(k-n-1/2,f_{k,\lambda,s}, v, v^\vee)
        =
        \frac{(4\sqrt{-1}\ \pi)^{-\sum_{j=1}^n\lambda_j+nk}\cdot 2^{n(2(n+1)-3k)}\cdot \pi^{n(3n+1)/2}}{\Gamma_{2n}(k) \cdot \dim(\GL_n, \lambda)} \prod_{j=1}^n\Gamma(\lambda_j - j + k -n).
        \]
    \end{thm}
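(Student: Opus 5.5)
The statement is Liu's archimedean computation, which the paper cites from \cite{2021_Zheng_Liu_arch_zeta_int}. A self-contained proof would proceed in three stages.

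\textbf{Reduction to a scalar.} By Schur's lemma the operator $T_\infty(s,f_{k,\lambda,s})$ acts on the lowest $K_\infty$-type of $\calD_\lambda$ by a scalar. More precisely, $f_{k,\lambda,k-n-1/2}$ generates a lowest weight module with lowest weight $k$. Since $\iota(G_n\times G_n)$ is embedded through $\delta$, the integral
\[
\int_{G_n(\bbR)} f(\delta^{-1}\iota(g,1))\,\calD_\lambda(g)v\,dg
\]
is an equivariant map from the $(1\times G_n)$-type generated by $f$ into $\calD_\lambda$. Hence only the component of the highest weight vector $v$ survives, and it suffices to evaluate the single matrix coefficient
\[
Z=\int_{G_n(\bbR)} f_{k,\lambda}(\delta^{-1}\iota(g,1))\,(\calD_\lambda(g)v,v^\vee)\,dg .
\]

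\textbf{Making the integrand explicit.} First compute $f_k(\delta^{-1}\iota(g,1))$ as a power of $\det$ of an explicit matrix in $g=\abcd$. In the Cayley/ball model this becomes $\det(1-\bar z\,\cdot)^{-k}$-type kernels on the bounded domain. Next, apply the differential operator $\frakQ_{k,\lambda}\bigl((2\pi\sqrt{-1})^{-1}X_{i+n,j}\bigr)$ to $f_k$. The factor $(2\pi\sqrt{-1})^{-1}$ accounts for the power of $4\sqrt{-1}\pi$. The off-diagonal root vectors act on the kernel by multiplying with the corresponding entries of the "mixed" block, so the result is $\frakQ_{k,\lambda}$ evaluated at that block, times $f_k$ and a power of $2$. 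Finally, write the matrix coefficient of $v$ through the holomorphic model of $\calD_\lambda$: it is $\rho_\lambda(\cdot)$ applied to $j(g,\sqrt{-1})^{-1}$-type factors, paired with the highest weight vector.

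\textbf{Evaluating the integral.} Use the Cartan/polar decomposition, or integrate over $\frakH_n\cong G_n(\bbR)/K_\infty$ with $\vol(K_\infty)=1$ as in the chosen measure. The $K_\infty$-integral projects $\frakQ_{k,\lambda}$, a highest weight polynomial of weight $\lambda-k$, onto the $\lambda$-isotypic part. This produces the factor $1/\dim(\GL_n,\lambda)$ via Schur orthogonality. The remaining integral over the positive cone reduces to Gindikin's generalized Gamma integral
\[
\int_{y>0}{\det}_\lambda(y)\,e^{-\tr y}\det(y)^{-(n+1)/2}\,dy=\Gamma_n(\lambda) ,
\]
whose shifted form gives $\prod_j\Gamma(\lambda_j-j+k-n)$ and the powers of $\pi$. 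The normalization $\Gamma_{2n}(k)^{-1}$ and the remaining power of $2$ come from the Hua/Siegel integral of $\det(1-\bar w w)^{k-\cdots}$ over the bounded domain. Equivalently, they arise from the normalization of $f_k$ restricted to the diagonal of $\frakH_{2n}$.

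The main obstacle is bookkeeping the differential operator $\frakQ_{k,\lambda}$. One must show it acts on the restricted kernel exactly as evaluation of $\frakQ_{k,\lambda}$ on the mixed block, with no lower-order terms at $s=k-n-1/2$. This holds because $\frakp_{2n,+}$ is commutative and $f_k$ is holomorphic of scalar weight, so I would verify it using Shimura's formulas \cite[\S A.8]{00_Shimura}. The rest is tracking constants in the measure normalization.
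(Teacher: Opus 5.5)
The paper gives no proof of its own here; it imports the formula from Liu. Judged on its own terms, your outline has a genuine error, and it sits exactly at the step you flag as the main obstacle.

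\textbf{The root vectors act by differentiation, not multiplication.} Since $\exp(\iota_+(u))=\beta\,\bfn(\tfrac{\sqrt{-1}}{2}u)\beta^{-1}$ in the complexification, for $h=\begin{pmatrix}A&B\\C&D\end{pmatrix}$ one gets
\[
f_k(h\exp(\iota_+(u)))=f_k(h)\det\bigl(1_{2n}+\tfrac{\sqrt{-1}}{2}M_hu\bigr)^{-k},\qquad M_h=(\sqrt{-1}\,C+D)^{-1}(D-\sqrt{-1}\,C).
\]
So $\frakQ_{k,\lambda}(X)f_k(h)$ is a full Taylor coefficient of this determinant power in the mixed variables. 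Commutativity of $\frakp_{2n,+}$ only makes $\frakQ_{k,\lambda}(X)$ well defined. It does not suppress the higher-order terms, which carry Pochhammer factors and the diagonal blocks of $M_h$. These blocks vanish at $h=\delta^{-1}$ but not at $h=\delta^{-1}\iota(g,1)$ for $g\notin K_\infty$. Neither holomorphy of $f_k$ nor the point $s=k-n-1/2$ removes them.

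\textbf{The case $n=1$ already refutes the claim.} Write $S=a+d+\sqrt{-1}(c-b)$ for $g=\abcd$. Then $f_k(\delta^{-1}\iota(g,1))=S^{-k}$, the mixed block of $M_h$ is $2\sqrt{-1}S^{-1}$, and $\det M_h=\overline{S}/S$. Your claim therefore amounts to $(X_{2,1}^2f_k)(\delta^{-1}\iota(g,1))$ being a power of $2$ times $S^{-k-2}$. In fact
\[
f_k(\delta^{-1}\iota(g,1)\exp(tX_{2,1}))=S^{-k}\Bigl(1-\frac{2t}{S}+\frac{t^2\,\overline{S}}{4S}\Bigr)^{-k},\qquad (X_{2,1}^2f_k)(\delta^{-1}\iota(g,1))=4k(k+1)S^{-k-2}-\frac{k}{2}\,\overline{S}\,S^{-k-1}.
\]
The second term does not integrate to zero against the matrix coefficient $(\overline{S}/2)^{-k-2}$ of $\calD_{k+2}$. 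Using $\int_{G_1(\bbR)}|S|^{-2m}\,dg\propto 4^{-m}/(m-1)$, both terms together give
\[
Z(\lambda=k+2)/Z(\lambda=k)=(2\pi\sqrt{-1})^{-2}(k-\tfrac12)(k-1).
\]
This is exactly the ratio $(4\sqrt{-1}\,\pi)^{-2}\Gamma(2k)/\Gamma(2k-2)$ predicted by the statement. Your version instead gives a power of $2$ times $(2\pi\sqrt{-1})^{-2}(k-1)/(k+1)$.

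\textbf{Consequence for the rest of the outline.} For $n=1$ your integrand would be a multiple of $S^{-\lambda}$, i.e.\ of the conjugate lowest-$K$-type matrix coefficient. Schur orthogonality would then return a multiple of the inverse formal degree, $\propto 1/(\lambda-1)$, which is rational in $\lambda$. The factors $\prod_j\Gamma(\lambda_j-j+k-n)$ come precisely from the terms you discard, so your last paragraph has nothing to produce them from. Separately, Gindikin's $\Gamma_n$ carries an $e^{-\tr y}$ weight that does not occur in the zeta integral over $\frakH_n$, and it has half-integral shifts $\Gamma(\lambda_j-(j-1)/2)$.

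A correct argument must compute the whole Taylor coefficient, equivalently the $\calD_\lambda$-component of the restricted section. It must then evaluate a Beta/Hua-type integral of the resulting non-holomorphic integrand.
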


\section{Arithmeticity of critical \texorpdfstring{$L$}{}-values in the left half plane}

% In this section, we first fix the notation for automorphic representations and define the critical points.
% Then, we apply Lemma \ref{lem_ram_zeta_int} to construct an Eisenstein series which is a nearly holomorphic modular form.
% The rationality of the Eisenstein series follows directly from the explicit computation of its Fourier coefficients.
% Combining this rationality and the explicit computations of local zeta integrals in \S \ref{section_zeta_int}, we show the arithmeticity of the critical $L$-values.

\subsection{Critical points, choice of global sections and assumptions}\label{subsec_choice_global_section}
Throughout this section, we fix a cuspidal automorphic representation $\pi = \otimes_v \pi_v$ of $G_n(\bbA_F)$ such that $\pi_v$ is a holomorphic discrete series of weight $\lambda_v = (\lambda_{v,1},\ldots,\lambda_{v,n})$ for every archimedean place $v$.
Let $\mu$ be a Hecke character of $F^\times F_{\infty,+}^\times \bs \bbA_F^\times$.
For each archimedean place $v$, define $\epsilon_v, \epsilon_v' \in \{0,1\}$ by $\mu_v = \sgn^{\epsilon_v}$ and $\epsilon'_v \equiv \epsilon_v + n \bmod 2$.
The archimedean factor of the standard $L$-function $L(s,\pi,\mu)$ is given by
\[
    \Gamma_\bbR(s + \epsilon_v') \prod_{i=1}^n \Gamma_\bbC(s+\lambda_{v,i}-i),
\]
where $\Gamma_\bbR(s) = \pi^{-s/2}\Gamma(s/2)$ and $\Gamma_\bbC(s) = 2(2\pi)^{-s}\Gamma(s)$.
We say that $s=s_0 \in \bbZ$ is critical if $L_v(s, \pi_v, \mu_v)$ and $L_v(1-s, \pi_v^\vee, \mu_v^{-1})$ are holomorphic at $s=s_0$ for every archimedean places $v$ (see \cite[Definition 1.3]{79_Deligne_corvaris}).
    An $L$-value at a critical point is called a critical value.
    Critical points exist only if $\epsilon_v$ is independent of $v \mid \infty$.
    Letting $\vep \in \{0,1\}$ with $\mu_v = \sgn^\vep$, the set of critical points equals
    \[
    \{s_0, 1-s_0 \in \bbZ \mid 1 \leq s_0 \leq \min_v\lambda_{v,n}-n,\ s_0 \equiv \vep + n \bmod 2\}.
    \]
    Deligne \cite{79_Deligne_corvaris} formulated a conjecture concerning the arithmeticity of critical $L$-values.
    Conjecturally, critical $L$-values are algebraic up to certain periods.
    In our result, the period in Deligne’s conjecture is replaced in our formula by the Petersson norm of a Hecke eigenform together with some power of $2\pi\sqrt{-1}$, and we prove the arithmeticity of critical $L$-values in the left half plane.

Now we construct a global section.
Suppose that $L(s,\pi,\mu)$ has a critical point $s_0$.
Take the following data:
\begin{itemize}
    \item $(\tau_v)_v \in \widehat{\calO}_F^\times$;
    \item disjoint finite sets $S_1$ and $S_2$ of nonarchimedean places such that $S_1 \sqcup S_2$ contains all the ramified places;
    \item $m_v\in \bbZ_{>0}$ for any $v \in S_1 \sqcup S_2$ such that $\pi_v^{K_v(\frakp_v^{m_v})} \neq 0$ and $\mu_v|_{1+\frakp_v^{m_v}} = \mathbf{1}$.
\end{itemize}
For a nonarchimedean place $v \not\in S_1 \sqcup S_2$, put $m_v = 0$.
    Set
    \[
    s_0'
    =
    \begin{cases}
        s_0 & \text{if $s_0 > 0$;}\\
        1-s_0 & \text{if $s_0 \leq 0.$}
    \end{cases}
    \]
    Hence, $s_0' \geq 1$ and it is a critical point.
    We define a meromorphic section $f_s = \otimes_v f_{v,s}$ of $I_{2n}(s,\mu) = \otimes_v I_{2n,v}(s,\mu_v)$ as follows.
    \begin{itemize}
        \item for an archimedean place $v$, $f_{v,s}$ is the standard section such that $f_{v,s} = f_{s_0' +n, \lambda_v, s}$;
        \item for $v \not \in S_1 \sqcup S_2$ with $v < \infty$, $f_{v,s}$ is the unramified section with $f_{v,s}(1_{4n})=1$;
        \item for $v \in S_1$, $f_{v,s} = 
        \mu_v(\tau_v)^{2n} \cdot r(\delta_{\tau_v})\bfh_{v,s}$;
        \item for $v \in S_2$, $f_{v,s} =  r(\delta_{\tau_v})\bff_{v,s}$.
    \end{itemize}
    The section $f_s$ depends on $(m_v)_v, (\tau_v)_v, (\lambda_v)_v$ and $S_1,S_2$ rather than $\pi_v$.
    Let $S_\infty$ be the set of archimedean places and $S = S_1 \sqcup S_2 \sqcup S_\infty$.
    We assume $S_1 \neq \emptyset$ if $s_0 \in \{0,1\}, F=\bbQ$ and $\mu^2 = \mathbf{1}$.

\subsection{Eisenstein series and Fourier coefficients}
    For a holomorphic section $h_s$ of $I_{2n}(s,\mu)$, we define the Siegel Eisenstein series $E(g,s,h)$ by
    \[
    E(g,s,h) = \sum_{\gamma \in P_{2n}(F) \bs G_{2n}(F)} h_s(\gamma g), \qquad g \in G_{2n}(\bbA_F).
    \]
    This series converges absolutely for $\mathrm{Re}(s)>n+1/2$ and admits a meromorphic continuation to all $s$.
    Below, with our choice of $f_s$, we consider the associated Eisenstein series $E(g,s,f)$.
    When we emphasize that $E(g,s,f)$ depends on the choice of $(\lambda_v)_v, \mu$ and $(\tau_v)_v$, we denote it by $E(g,s,f,\mu,(\lambda_v)_v, (\tau_v)_v)$.
    
    \begin{lem}\label{lem_holomorphy_Eis_ser}
        The Eisenstein series $E(g,s,f)$ is holomorphic at $s = s_0'-1/2 = |s_0-1/2|$ and $E(g,s_0'-1/2,f)$ corresponds to a nearly holomorphic modular form.
        Furthermore, if $(\lambda_v)_v$ is parallel scalar weight $s_0'+n$, then
        $E(g,s_0'-1/2,f)$ corresponds to a holomorphic modular form.
    \end{lem}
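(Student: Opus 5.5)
Write $k = s_0'+n$, so $s_0'-1/2 = k-n-1/2$ is the point where the archimedean section $f_{k,\lambda_v,s}$ is the differential-operator image of $f_k$. The plan is to separate the two issues: holomorphy of $E(g,s,f)$ at $s=s_0'-1/2$, and near holomorphy of the resulting form. For near holomorphy, note that at this point $f_{v,s}$ for $v\mid\infty$ equals $\frakQ_{k,\lambda_v}\bigl(\frac{1}{2\pi\sqrt{-1}}(X_{i+n,j})\bigr) f_k$, an element of $\calU(\frakp_{2n,+})$ applied to $f_k$. Since the Eisenstein series intertwines right translation, $E(g,s_0'-1/2,f)$ is obtained by applying the same element of $\calU(\frakp_{2n,+})$, at each archimedean place, to the Eisenstein series $E(g,s,f^{(k)})$ built from the weight-$k$ vectors $f_k$ and the same nonarchimedean data.

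For the scalar-weight Eisenstein series, the standard fact (Shimura \cite{00_Shimura}) is that it is holomorphic, and its value corresponds to a holomorphic Hilbert--Siegel modular form of weight $k$, whenever $k > 2n+1$. It is nearly holomorphic for the remaining critical range $k\ge 2n+1$ (that is, $s_0'\ge n+1$) and for smaller $k$ away from the exceptional cases. Applying $\frakp_+$-operators preserves $\frakp_-$-finiteness, because $[\frakp_-,\frakp_+]\subset\frakk$ and the $\frakk$-types remain finite. Hence $E(g,s_0'-1/2,f)$ is nearly holomorphic. In parallel scalar weight $\lambda_v=(k,\dots,k)$ we have $\frakQ_{k,\lambda}=1$, so the Eisenstein series is that of $f_k$ itself, and holomorphy follows whenever the scalar series is holomorphic.

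The main obstacle is the range where $s_0'-1/2$ lies in $\{1/2,\dots,n-1/2\}$, i.e.\ small $s_0'$, where the series need not be convergent or holomorphic. There I would use the constant term along $P_{2n}$: poles of $E$ come only from $M_{w_{2n},s}f_s$. At the unramified places this contributes $a^S_{2n}(s,\mu)/b^S_{2n}(s,\mu)$. At $v\in S_2$ the sections $\bff_{v,s}$ are supported in the big cell, so their intertwining images are holomorphic. At $v\in S_1$, Lemma~\ref{lem_ram_zeta_int}(1) shows that $\bfh_{v,s}$ lies in the kernel of $M^*_{v,w_{2n},s}$ at these half-integers when $\mu_v^2=\mathbf{1}$; this zero cancels the possible simple pole of the partial $L$-factor ratio (for instance $\zeta^S_F(2s-2n+2j)$ at $s=n-j+1/2$). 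When $\mu^2\neq \mathbf{1}$ or $F\neq\bbQ$, no pole arises at these points. This is exactly why $S_1\neq\emptyset$ is assumed when $s_0\in\{0,1\}$, $F=\bbQ$ and $\mu^2=\mathbf{1}$.

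Holomorphy of the constant term then gives holomorphy of $E$, since the cuspidal and Siegel--Weil type residual parts are controlled by this constant term. Near holomorphy at these low points I would deduce by realizing $E(g,s_0'-1/2,f^{(k)})$ through the Siegel--Weil formula (as the remark after Lemma~\ref{lem_ram_zeta_int} suggests), as a theta integral of a holomorphic theta function. Alternatively, one can check from the Fourier expansion (Theorem~\ref{Whittaker_Int_op} and Shimura's confluent hypergeometric computations) that the only non-holomorphic terms have coefficients vanishing at the point, with the singular Fourier coefficients killed by the kernel property at $v\in S_1$.
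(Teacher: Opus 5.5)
\textbf{Gap in the exceptional case.} Your argument has a genuine gap in the one case that needs new input: $s_0'=1$, $F=\bbQ$, $\mu^2=\mathbf{1}$, where $s_0'-1/2=1/2$. There you use Lemma~\ref{lem_ram_zeta_int}(1) to cancel a pole of $M_{w_{2n},s}f_s$, and then deduce holomorphy of $E$ from ``the constant term''. This fails on two counts.

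\emph{The constant term.} The constant term of a Siegel Eisenstein series along $P_{2n}$ is not $f_s+M_{w_{2n},s}f_s$. The intermediate $P_{2n}$-double cosets contribute Eisenstein series on the Levi $\GL_{2n}$. So ``$M_{w_{2n},s}f_s$ holomorphic implies $E$ holomorphic'' needs a real argument or a precise citation, and your remark about ``cuspidal and residual parts'' does not supply one.

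\emph{The actual obstruction.} More importantly, holomorphy in $s$ is not where the difficulty, or the hypothesis $S_1\neq\emptyset$, lies. The danger at $s=1/2$ is that the value is an automorphic form that is \emph{not nearly holomorphic}; this is exactly what happens in the excluded case of Remark~\ref{rem_s=0_except}. For near-holomorphy you only say you ``would'' use Siegel--Weil, or you assert that the kernel property kills the singular Fourier coefficients. But the kernel property is a statement about $M_{w_{2n}}$ alone and gives no direct control over the degenerate coefficients of intermediate rank.

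\textbf{The missing idea} is how the kernel property feeds into Siegel--Weil.
\begin{itemize}
\item By \cite[Proposition 5.6]{1992_Kudla-Rallis}, $f_{p,1/2}\in\ker M^*_{p,w_{2n},1/2}$ gives $f_{p,1/2}\in R(V_p^+)\cap R(V_p^-)$, where $V_p^{\pm}$ are the $(2n+2)$-dimensional spaces with character $\mu_p$ and Hasse invariant $\pm1$.
\item Since $f_{\infty,1/2}\in R(V(2n+2,0))$ and the other local components lie in sums of $R(V_v)$'s, this freedom at $p\in S_1$ lets you write $f_{1/2}$ as a sum of Siegel--Weil sections attached to \emph{coherent} families. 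These come from global positive definite, hence anisotropic, quadratic spaces.
\item Let $f_s^{\std}$ be the standard section with $f_{1/2}^{\std}=f_{1/2}$. Its Eisenstein series at $s=1/2$ is then a theta integral, so it is nearly holomorphic, and holomorphic in parallel scalar weight.
\item Without $S_1$, incoherent families occur, and these are what destroy near-holomorphy.
\end{itemize}

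\textbf{The non-standard section.} You must also handle the fact that $f_s$ is not standard at $S_1$. Writing $h_s=(s-1/2)^{-1}(f_s-f_s^{\std})$, one has $E(g,s,f)|_{s=1/2}=E(g,1/2,f^{\std})+\mathrm{Res}_{s=1/2}E(g,s,h)$, and this residue must vanish. The paper shows this by decomposing $h_{1/2}$ into incoherent pieces, whose residue is zero by \cite[Theorem 3.3]{2008_Kudla}, and anisotropic coherent pieces, which have no pole.

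The same issue is hidden in your first step. The sections $f_{k,\lambda,s}$ and $\frakQ_{k,\lambda}(\cdots)f_{k,s}$ agree only at $s=s_0'-1/2$, so the identity $E(g,s_0'-1/2,f)=\frakQ_{k,\lambda}(\cdots)E(g,s_0'-1/2,f^{(k)})$ requires exactly this kind of control. Finally, your appeal to Shimura in the non-exceptional cases does not literally cover the sections $\bfh_{v,s}$ at $v\in S_1$, so a short adaptation is still needed there.
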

    \begin{proof}
    We only prove the case $s_0' = 1, F=\bbQ$ and $\mu^2 = \mathbf{1}$.
    The other cases are similar and easier (cf.~\cite[Lemma 4.3.1]{Horinaga_3}).
    For $p \in S_1$, the section $f_{p,1/2}$ lies in the kernel of $M_{p,w_{2n},1/2}^*$ by Lemma \ref{lem_ram_zeta_int}.
    For a quadratic space $V_v$ of $\bbQ_v$, let $R(V_v)$ be the image of the theta correspondence of the trivial representation of the orthogonal group $\O(V_v)$.
    By \cite[Proposition 5.6]{1992_Kudla-Rallis}, $f_{p,1/2} \in R(V_p^+) \cap R(V_p^-)$, where $V_p^\pm$ is the $2n+2$-dimensional quadratic spaces $V$ over $\bbQ_p$ with Hasse invariants $\pm 1$ and the associated quadratic character $\mu_p$.
    Let $V(2n+2,0)$ denote the positive definite $2n+2$-dimensional quadratic space over $\bbR$.
    The archimedean component $f_{\infty, 1/2}$ lies in $R(V(2n+2,0))$.
    We may decompose $f_{1/2}$ as a sum of sections so that $f_{1/2} \in \sum_{\{V_v\}_v} \otimes_v R(V_v)$, where $V_v$ runs over quadratic spaces over $\bbQ_p$ of dimension $2n+2$ such that $V_\infty = R(2n+2,0)$.
    We may choose $V_p$ as $V_p^\pm$ appropriately.
    Each family $\{V_v\}_v$ gives rise to a global quadratic space $V$.
    Hence, the section $f_{1/2}$ is a sum of Siegel-Weil sections in the sense of \cite{1988_Kudla-Rallis}.
    %, i.e., a sum of sections associated with Schwartz functions $\calS(V(\bbA))$ for quadratic spaces $V$ over $\bbQ$ so that $V \otimes \bbR = V(2n+2,0)$ and $V \otimes \bbQ_p = V_p^\pm$.
    Let $f_s^{\std}$ be the standard section such that $f_{1/2}^{\std} = f_{1/2}$.
    Since the quadratic space $V$ is anisotropic, the resulting Eisenstein series for $f_s^{\std}$ is nearly holomorphic at $s=1/2$ (cf.~\cite[Lemma 4.3.1]{Horinaga_3}).
    For $h_s = (s-1/2)^{-1}(f_s - f_s^\std)$,
    we have
    \[
    E(g,s,f)|_{s=1/2} = E(g,1/2,f^\std) + \mathrm{Res}_{s=1/2} E(g,s,h_s).
    \]
    Since the archimedean section of $f_s$ is standard, we may assume $h_{\infty,s} = f_{\infty,s}$.
    Again, take the standard section $h_{s}^\std$ such that $h_{1/2}^\std = h_{1/2}$.
    At $s=1/2$, the section $h_{s}^\std$ is a sum of sections associated with an incoherent family $\{V_v\}_v$ and Siegel-Weil sections associated with positive definite quadratic spaces.
    By \cite[Theorem 3.3]{2008_Kudla}, the residue of $E(g,s,h_s^\std)$ is zero at $s=1/2$.
    Since the pole of the Siegel Eisenstein series is at most simple, we have $E(g,s,f)|_{s=1/2} = E(g,1/2,f^\std)$.
    Hence, at $s=1/2$, the Eisenstein series $E(g,s,f)$ is holomorphic and corresponds to a nearly holomorphic modular form.
    
    If $(\lambda_v)_v$ has parallel weight $s_0'+n$, then each archimedean section generates the lowest weight representation of weight $s_0'+n$ and is a lowest weight vector in that representation.
    Hence, the Eisenstein series corresponds to a holomorphic modular form.
    \end{proof}
    
    We next compute the local factors appearing in the Fourier coefficients: the Siegel series and the confluent hypergeometric functions.
    Let $h \in \Sym_{2n}(F)$ be a totally positive definite symmetric matrix and $h_v$ be the element in $\Sym_{2n}(F_v)$ corresponding to $h$.
    Put $B_v = \bdelta_vh_v$ and
        \[
        B'_v = \begin{pmatrix}
            0_n&1_n\\
            1_n&0_n
        \end{pmatrix}B_v.
        \]
    For a fractional ideal $\frakA$ of $\calO_v$, let $\Sym_{2n}(\frakA)^*$ be the dual of $\Sym_{2n}(\frakA)$ with respect to the bilinear form $(x,y) \mapsto \tr(xy)$.
    
    For $B \in \Sym_{2n}(\calO_v)^*$ with $\det B \neq 0$, we define a polynomial $\gamma(B,X)$ with coefficients in $\bbZ$ by
    \[
    \gamma(B,X) = \frac{(1-X)\prod_{j=1}^n(1-q_v^{2j}X^2)}{1-q^n_v\xi_B X}
    \]
    where $\xi_B$ is $+1$ (resp.\,$-1$) if $D_B \in F_v^{\times2}$ (resp.\,$F_v(\sqrt{D_B})/F_v$ is a quadratic unramified extension), and zero otherwise.
    Let $f_{v,s}^0$ be the unramified standard section of $I_{v,2n}(s,\mathbf{1})$ with $f_{v,s}^0(1_{4n}) = 1$.
    Set
    \[
    b(B,s) = \int_{\Sym_{2n}(F_v)}
        f^0_{v,s-n-1/2}(w_{2n}\bfn(x)) \overline{\psi_{v}(\tr(Bx))}
        \ dx.
    \]
        
    \begin{thm}[{\cite[Theorem 16.2]{00_Shimura}}]\label{thm_FC_unram}
        There exists a polynomial $F(B,X) \in \bbZ[X]$ such that
        \[
        b(B,s) = \gamma(B,q_v^{-s})F(B,q_v^{-s}).
        \]
    \end{thm}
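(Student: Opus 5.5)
The plan is to compute the Siegel series $b(B,s)$ directly and reduce it to the local densities of $B$, following Shimura's \S 16 and the classical work of Kitaoka and Feit. We unfold the integral over $\Sym_{2n}(F_v)$ by decomposing $x$ according to its denominator. For $x \in \Sym_{2n}(F_v)$, choose a coprime symmetric pair $(C,D)$ with $x = C^{-1}D$. Then $w_{2n}\bfn(x) \in P_{2n}(F_v)K_v$ with $|\Delta|_v = \nu(x)^{-1}$, where $\nu(x)$ is the product of the elementary divisor denominators of $x$. This gives
\[
b(B,s) = \sum_{x \in \Sym_{2n}(F_v)/\Sym_{2n}(\calO_v)} \nu(x)^{-s}\,\overline{\psi_v(\tr(Bx))}.
\]
For $\mathrm{Re}(s)\gg0$ this is a formal power series in $X=q_v^{-s}$ with integer coefficients, since the character sums over the finite groups $\Sym_{2n}(\frakp_v^{-r})/\Sym_{2n}(\calO_v)$ are integers. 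The sums are integers because they are sums of roots of unity that are Galois-stable. The integrality of $B\in\Sym_{2n}(\calO_v)^*$, after accounting for the $\bdelta_v$ normalization of $\psi_v$, makes these orthogonality sums well behaved.

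Next we identify $b(B,s)$ with a local density. Grouping the terms by $\nu(x)=q_v^{r}$ and using orthogonality, the partial sums up to level $r$ equal $q_v^{r\cdot(\text{const})}\,\#\{X\in \Mat_{2n,m}(\calO_v/\frakp_v^r) : B[X]\equiv \ldots\}$-type counts. Concretely, $b(B,k)=\alpha_v(H_{k},B)$, the local density of representing $B$ by the hyperbolic space of rank $2k$. Two inputs are then needed. First, we use the reduction formulas of Kitaoka (or Shimura's recursion) relating $\alpha_v(H_k,B)$ to densities of smaller "primitive" forms. Combined with the Jordan decomposition of $B$, these show that $b(B,s)$ is a rational function of $X$. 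Second, we show that after multiplying by $\gamma(B,X)^{-1}$ it becomes a polynomial. The factor $\gamma(B,X)$ is exactly the value of the Siegel series at unimodular $B$. Here $\xi_B$ is defined through the quadratic character of $D_B$, with $\xi_B=0$ when $F_v(\sqrt{D_B})/F_v$ is ramified.

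The main obstacle is this last step: showing that $b(B,q_v^{-s})/\gamma(B,q_v^{-s})$ is a polynomial with integer coefficients rather than merely a rational function. My approach would be induction on the Jordan decomposition $B \simeq B_0\perp \varpi_v B_1\perp\cdots$ using Shimura's induction formula. The unimodular-part contribution supplies $\gamma(B,X)$ exactly. It arises from the $(1-X)\prod_j(1-q_v^{2j}X^2)$ normalization, which is $b_{v,2n}(s-n-1/2,\mathbf{1})^{-1}$ in the notation of Theorem \ref{thm_unr_zeta_int}, and from the single quadratic $L$-factor $L_v(s-n,\chi_{D_B})$. The remaining factor is then a finite sum counting lattices between $L$ and $L^\#$, which is polynomial. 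Integrality of the coefficients follows because the power series has integer coefficients and $\gamma(B,X)^{-1}$, with constant term $1$, is a power series in $\bbZ[[X]]$. Alternatively, since the statement is exactly \cite[Theorem 16.2]{00_Shimura}, one may simply check that our normalizations of $\psi_v$, the measure $dx$, and $f^0_{v,s}$ agree with Shimura's, and then cite that result.
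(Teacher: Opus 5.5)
The paper gives no argument for this statement; it is quoted from \cite[Theorem 16.2]{00_Shimura}. Your closing alternative (match the normalizations of $\psi_v$, $dx$ and $f^0_{v,s}$ with Shimura's, then cite) is therefore exactly what the paper does. One normalization point is worth recording. $\psi_v$ has conductor $\frakd_v^{-1}$ rather than order zero, and $\psi_v(\tr(Bx))=\psi_{v,\bdelta_v^{-1}}(\tr(\bdelta_v Bx))$. So this only replaces $B$ by $\bdelta_v B\in\Sym_{2n}(\calO_v)^*$. Since $D_{\bdelta_v B}=\bdelta_v^{2n}D_B$, neither $\xi_B$ nor $\gamma(B,X)$ changes.

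As a self-contained proof, however, your density sketch has a genuine gap exactly at the step you flag, and the mechanism you propose there does not work as stated.

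\emph{The unimodular block cannot supply $\gamma(B,X)$.} $\xi_B$ is determined by the square class of the whole discriminant $D_B$. For instance, for $B=B_0\perp\varpi_v B_1$ with $B_1$ of odd size, $\xi_B=0$ whatever $B_0$ is.

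\emph{The lattice sum does not give divisibility.} Saying ``the remaining factor is a finite lattice sum, hence polynomial'' begs the question, because the natural finite lattice sum computes $b(B,s)$ itself, not $b(B,s)/\gamma(B,X)$.

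What does work, for $v\nmid 2$, is Kitaoka's argument in three steps.
\begin{enumerate}
\item Write $b(B,s)$ as the finite sum, over $G\in\GL_{2n}(\calO_v)\backslash(\Mat_{2n}(\calO_v)\cap\GL_{2n}(F_v))$ with $B_G={}^tG^{-1}BG^{-1}\in\Sym_{2n}(\calO_v)^*$, of $(q_v^{2n+1}X^2)^{\ord_v\det G}$ times the primitive series $b^{\mathrm{pr}}(B_G,s)$.
\item Observe that $D_{B_G}$ lies in the square class of $D_B$, so every term has the same $\xi$.
\item Evaluate each primitive series by a finite-field count of primitive representations by a split space. This gives exactly $\gamma(B,X)$ when $B_G$ is unimodular, and otherwise a multiple of $(1-X)\prod_{j=1}^n(1-q_v^{2j}X^2)=\gamma(B,X)(1-\xi_B q_v^nX)$. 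In either case the term is a multiple of $\gamma(B,X)$ in $\bbZ[X]$.
\end{enumerate}
This evaluation is the real content of the theorem, and it is absent from your proposal; the dyadic places also need separate treatment. Your final integrality step via $\gamma(B,0)=1$ is fine once polynomiality over $\bbQ$ is known.
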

    For $v \in S_1 \sqcup S_2$, we can explicitly compute our Siegel series.
    \begin{lem}\label{lem_FC_ram}
    For $v \in S_1 \sqcup S_2$, we have 
    \[
    \int_{\Sym_{2n}(F_v)}f_{v,s}(w_{2n}\bfn(x)) \overline{\psi_{v}(\tr(hx))} \ dx = 0
    \]
    if $B_v \not\in \frakp_v^{-m_v}\Sym_{2n}(\calO_v)^*$.
    If $B_v \in \frakp_v^{-m_v}\Sym_{2n}(\calO_v)^*$, we have
    \begin{align*}
    &\int_{\Sym_{2n}(F_v)}f_{v,s}(w_{2n}\bfn(x)) \overline{\psi_{v}(\tr(hx))} \ dx\\
    &=
    C_v
    \psi_{v, \bdelta_v^{-1}}(\tr(B_v'\tau_v^{-1}))
    \cdot
    \begin{dcases}
    \mu_v(\det(B_v))|\det(B_v)|_v^{s}
        c(-s,\mu_v^{-1},B_v,\psi_{v,\bdelta_v^{-1}}^{-1})
         & \text{if $v \in S_1$;}\\
        \mu_v(\tau_v)^{2n} & \text{if $v \in S_2$,}
    \end{dcases}
    \end{align*}
    with the rational constant $C_v = \vol(\Sym_{2n}(\frakp_v^{m_v}))\mu_v((-1)^n)$.
    \end{lem}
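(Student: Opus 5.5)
We compute the case $v \in S_2$ directly from the definition of $\bff_{v,s}$, and reduce the case $v \in S_1$ to it using the functional equation \eqref{FE_Siegel_series}.

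\emph{Case $v\in S_2$.} Here $f_{v,s} = r(\delta_{\tau_v})\bff_{v,s}$, and the integrand is $\bff_{v,s}(w_{2n}\bfn(x)\delta_{\tau_v})$. Write
\[
\delta_{\tau_v} = \begin{pmatrix} 1_{2n} & 0\\ \tau_v J & 1_{2n}\end{pmatrix}, \qquad J=\begin{pmatrix}0_n&1_n\\1_n&0_n\end{pmatrix}.
\]
Since $w_{2n}$ conjugates lower unipotents to upper unipotents, we get
\[
w_{2n}\bfn(x)\delta_{\tau_v} = \bfn(-\tau_v J)\, w_{2n}\,\bfn(x+\tau_v^{-1}J)\cdot(\text{an element of }P_{2n}\text{ with }\Delta\text{-value }\tau_v^{\pm 2n}),
\]
or an analogous factorization. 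I will determine it by a direct matrix computation, keeping track of the factor $\mu_v(\tau_v)^{2n}$ (or its inverse) and the sign $\mu_v((-1)^n)$.

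\begin{itemize}
\item The support condition $w_{2n}\bfn(y)\in P_{2n}K_v(\frakp_v^{m_v})$ holds exactly when $y\in \Sym_{2n}(\frakp_v^{m_v})$, as in the big-cell description used in Lemma \ref{lem_zeta_int_ramified_big_cell}. On that set $\bff_{v,s}$ takes the value $\mu_v(\det)\cdot$(a sign).
\item After the substitution $y=x+\tau_v^{-1}J$, the integral becomes $\psi$ of $\tr(h\tau_v^{-1}J)$ times the integral of $\overline{\psi_v(\tr(hy))}$ over $\Sym_{2n}(\frakp_v^{m_v})$.
\item This integral equals $\vol(\Sym_{2n}(\frakp_v^{m_v}))$ when $B_v=\bdelta_v h_v\in \frakp_v^{-m_v}\Sym_{2n}(\calO_v)^*$ (the character $\psi_{v,\bdelta_v^{-1}}$ has order $0$), and vanishes otherwise.
\end{itemize}

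Rewriting $\tr(hJ)$ as $\tr(B'_v)$ in terms of $\psi_{v,\bdelta_v^{-1}}$ gives the stated formula, including the vanishing statement.

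\emph{Case $v\in S_1$.} Here
\[
f_{v,s}=\mu_v(\tau_v)^{2n}\, r(\delta_{\tau_v})\, M_{v,w_{2n},-s}\bff_v(-s,\mu_v^{-1}).
\]
The integral on the left is $\ell_{\psi_v^{-1},h}$ applied to $f_{v,s}$. Since right translation commutes with the intertwining operator,
\[
\ell_{\psi_v^{-1},h}(f_{v,s}) = \mu_v(\tau_v)^{2n}\,\ell_{\psi_v^{-1},h}\circ M_{v,w_{2n},-s}\bigl(r(\delta_{\tau_v})\bff_v(-s,\mu_v^{-1})\bigr).
\]
I rewrite $\ell_{\psi_v^{-1},h}$ as $\ell_{\psi^{-1}_{v,\bdelta_v^{-1}},B_v}$ and apply \eqref{FE_Siegel_series} with $s\mapsto -s$ and $\mu_v\mapsto\mu_v^{-1}$. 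This produces the factor
\[
\mu_v(\det B_v)\,|\det B_v|_v^{s}\; c\bigl(-s,\mu_v^{-1},B_v,\psi_{v,\bdelta_v^{-1}}^{-1}\bigr)
\]
times $\ell(r(\delta_{\tau_v})\bff_v(-s,\mu_v^{-1}))$. The last factor is computed by the $S_2$ case with $\mu_v^{-1}$ in place of $\mu_v$, which contributes $\mu_v(\tau_v)^{-2n}$. This cancels the prefactor $\mu_v(\tau_v)^{2n}$, which explains the normalization in the definition of $f_{v,s}$. The vanishing for $B_v\notin\frakp_v^{-m_v}\Sym_{2n}(\calO_v)^*$ is inherited from the $S_2$ case.

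\emph{Main obstacle.} The delicate part is the bookkeeping rather than any single idea:
\begin{itemize}
\item the explicit Bruhat-type factorization of $w_{2n}\bfn(x)\delta_{\tau_v}$, with the correct power of $\mu_v(\tau_v)$ and the sign $\mu_v((-1)^n)$;
\item the passage between $\psi_v$ and $\psi_{v,\bdelta_v^{-1}}$, including its complex conjugate;
\item justifying \eqref{FE_Siegel_series} for the meromorphic section, which holds by meromorphic continuation from the region of absolute convergence.
\end{itemize}
For the factorization, I will first check the computation for $n=1$ and then compute the general case blockwise.
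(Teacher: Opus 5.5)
Your overall plan matches the paper's: compute the $S_2$ case directly, and reduce $S_1$ to it via \eqref{FE_Siegel_series} with $(-s,\mu_v^{-1},\psi_{v,\bdelta_v^{-1}}^{-1})$, using that right translation commutes with $M_{v,w_{2n},-s}$. The $S_1$ step is correct as written. The prefactor $\mu_v(\tau_v)^{2n}$ cancels the factor $\mu_v^{-1}(\tau_v)^{2n}$ produced by the $S_2$ computation for $\bff_v(-s,\mu_v^{-1})$. Also, $C_v$ is unchanged under $\mu_v\mapsto\mu_v^{-1}$ because $\mu_v((-1)^n)=\pm1$.

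The gap is in the $S_2$ mechanism, where two of your claims are false.

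(i) It is not true that $w_{2n}\bfn(y)\in P_{2n}K_v(\frakp_v^{m_v})$ iff $y\in\Sym_{2n}(\frakp_v^{m_v})$. The set $P_{2n}K_v(\frakp_v^{m_v})$ is $P_{2n}$ times the \emph{lower} unipotents with entries in $\frakp_v^{m_v}$. Since $w_{2n}\bfn(y)$ has bottom block row $(-1_{2n},-y)$, the correct condition is that $y$ is invertible with $y^{-1}\in\Sym_{2n}(\frakp_v^{m_v})$. For $y\in\Sym_{2n}(\frakp_v^{m_v})$ the integrand actually vanishes.

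(ii) There is no identity $w_{2n}\bfn(x)\delta_{\tau_v}=\bfn(-\tau_vJ)\,w_{2n}\bfn(x+\tau_v^{-1}J)\,q$ with $q\in P_{2n}(F_v)$; comparing left block columns forces $x=0$. In any case a right factor in $P_{2n}$ could not be discarded, since $\bff_{v,s}$ is only left $P_{2n}$-equivariant.

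These two errors compensate, so your support set $-\tau_v^{-1}J+\Sym_{2n}(\frakp_v^{m_v})$ is right, but it is not derived. The repair, which is what the paper does, uses the factorization, valid for invertible $x$,
\[
w_{2n}\bfn(x)=\begin{pmatrix}-x^{-1}&1_{2n}\\0_{2n}&-x\end{pmatrix}\begin{pmatrix}1_{2n}&0_{2n}\\x^{-1}&1_{2n}\end{pmatrix},\qquad \delta_{\tau_v}=\begin{pmatrix}1_{2n}&0_{2n}\\ \tau_vJ&1_{2n}\end{pmatrix}.
\]
Hence $f_{v,s}(w_{2n}\bfn(x))$ equals $\mu_v(\det x)^{-1}|\det x|_v^{-s-n-1/2}$ times $\bff_{v,s}$ evaluated at the lower unipotent with block $x^{-1}+\tau_vJ$. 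This is nonzero iff $x^{-1}\in-\tau_vJ+\Sym_{2n}(\frakp_v^{m_v})$. Since $-\tau_vJ\in\GL_{2n}(\calO_v)$, this is equivalent to $x\in-\tau_v^{-1}J+\Sym_{2n}(\frakp_v^{m_v})$. On that set $\det x\in(-1)^n\tau_v^{-2n}(1+\frakp_v^{m_v})$. This gives the value $\mu_v((-1)^n\tau_v^{2n})$ and settles your undetermined $\tau_v^{\pm 2n}$ and sign.

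If you want to keep your translation picture, the correct identity is $\delta_{\tau_v}=\bfn(c^{-1})\,\diag(c^{-1},c)\,w_{2n}^{-1}\,\bfn(c^{-1})$ with $c=\tau_vJ$. The right-hand factor $\diag(c^{-1},c)\,w_{2n}^{-1}\bfn(c^{-1})$ lies in $G_{2n}(\calO_v)$, not in $P_{2n}$. With this factor, the support condition does become $x+\tau_v^{-1}J\in\Sym_{2n}(\frakp_v^{m_v})$.
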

    \begin{proof}
    When $v \in S_1$, by the functional equation of the Siegel series \eqref{FE_Siegel_series}, we have
        \begin{align*}
        &\int_{\Sym_{2n}(F_v)}
        f_{v,s}
        \left(w_{2n}\bfn(x)\right)
        \psi_{v, \bdelta_v^{-1}}(-\tr(B_v x))\ dx 
        = 
        \ell_{\psi_{v, \bdelta_v^{-1}}^{-1}, B_v} (f_{v,s}) \\
        &= \ell_{\psi_{v, \bdelta_v^{-1}}^{-1}, B_v} (M_{v,w_{2n},-s}
        (\mu_v(\tau_v)^{2n}
        r(\delta_{\tau_v})\bff_{v}(-s,\mu_v^{-1})
        ))\\
        &=
        \mu_v(\tau_v^{2n}\det(B_v))
        |\det(B_v)|_v^{s} 
        c(-s,\mu_v^{-1},B_v,\psi_{v,\bdelta_v^{-1}}^{-1})
        \cdot
        \ell_{
        \psi_{v, \bdelta_v^{-1}}^{-1}, B_v}
        (r(\bdelta_{\tau_v})\bff_{v}(-s,\mu_v^{-1})).
        \end{align*}
    Hence, it suffices to consider the case where $v \in S_2$.
        When $v \in S_2$, for $x \in \Sym_{2n}(F_v)$, $f_{v,s}(w_{2n}\bfn(x))$ is nonzero only if $w_{2n}\bfn(x)\delta_{\tau_v} \in P_{2n}K_v(\frakp_v^{m_v})$, which is equivalent to $x \in \GL_{2n}(\calO_{v})$ and
        \[
        x^{-1} \equiv \tau_v\begin{pmatrix}
            0_{n} & -1_n\\
            -1_n & 0_n
        \end{pmatrix}
        \bmod \frakp_v^{m_v}
        \]
        by a computation similar to that in Lemma \ref{lem_zeta_int_ramified_big_cell}.
        Conversely, for such $x$, $f_{v,s}(w_{2n}\bfn(x))$ is nonzero and is equal to $\mu_v((-1)^n\tau_v^{2n})$.
        Hence we obtain
        \begin{align*}
        &\int_{\Sym_{2n}(F_v)}f_{v,s}
        \left(w_{2n}\bfn(x)\right) 
        \psi_{v, \bdelta_v^{-1}}(-\tr(B_v x))\ dx\\
        &=
        \mu_v((-1)^n\tau_v^{2n})
        \int_{
        \tau_v^{-1}\left(\begin{smallmatrix}
            0_n & -1_n\\
            -1_n & 0_n
        \end{smallmatrix}\right) + \Sym_{2n}(\frakp_v^{m_v})} 
        \psi_{v, \bdelta_v^{-1}}\left(-\tr\left(B_v x\right)\right)\ dx\\
        &=
        \mu_v((-1)^n\tau_v^{2n})
        \psi_{v, \bdelta_v^{-1}}(\tr(B_v'\tau_v^{-1}))
        \cdot
        \int_{\Sym_{2n}(\frakp_v^{m_v})} \psi_{v, \bdelta_v^{-1}}\left(-\tr\left(B_v x\right)\right)\ dx\\
        &=
        \mu_v((-1)^n\tau_v^{2n})
        \cdot
        \begin{cases}
            \vol(\Sym_{2n}(\frakp_v^{m_v}))\psi_{v, \bdelta_v^{-1}}(\tr(B_v'\tau_v^{-1})) & \text{if $B_v \in \frakp_v^{-m_v}\Sym_{2n}(\calO_v)^*$;}\\
            0 & \text{otherwise.}
        \end{cases}
        \end{align*}
        This completes the proof.
    \end{proof}
    For archimedean places, we will consider the section $f_{\infty,s,k}$.
    Let $\xi(g,h;s,s')$ be the confluent hypergeometric function defined by
    \[
    \xi(g,h;s,s')
    =
    \int_{\Sym_{2n}(\bbR)} \det(x+\sqrt{-1}\,g)^{-s} \det(x-\sqrt{-1}\,g)^{-s'} \exp(-2\pi \sqrt{-1}\,\tr(hx)) \ dx.
    \]
    Here for $s,s' \in \bbC$, we choose the branches of $\det(z)^s$ and $\overline{\det(z)}^{s'}$ so that their values at $z = \sqrt{-1} \cdot 1_{2n}$ are $\exp(\pi\sqrt{-1}\,ns)$ and $\exp(-\pi\sqrt{-1}\,ns')$, respectively.
    For $z = x + \sqrt{-1}\,y \in \frakH_{2n}$, set
    \[
    g_z = \begin{pmatrix}
            1_{2n} & x\\
            0_{2n} & 1_{2n}
        \end{pmatrix}
        \begin{pmatrix}
            y^{1/2}&0_{2n}\\
            0_{2n}&y^{-1/2}
        \end{pmatrix}.
    \]
    We compute the degenerate Whittaker integral for $f_{\infty,s,k}$.
    \begin{lem}\label{lem_FC_arch}
    For an archimedean place $\infty$, we have
    \begin{align*}
        &\int_{\Sym_{2n}(\bbR)} f_{\infty,s,k}(w_{2n} \bfn(u)g_z) \overline{\psi_\infty(\tr(h_\infty u))} \ du\\
        &=
        \exp(2\pi\sqrt{-1}\,\tr(h_\infty x))(\det(y))^{(s+n+1/2)/2}\xi(y,h_\infty;(s+k+n+1/2)/2, (s-k+n+1/2)/2).
        \end{align*}
        Substituting $s=k-n-1/2$, it equals
        \[
        2^{-2n^2+n}(-2\pi\sqrt{-1})^{2nk}\Gamma_{2n}(k)^{-1}(\det(h_\infty))^{k-n-1/2} (\det(y))^{k/2}\exp(2\pi\sqrt{-1}\,\tr(h_\infty z)).
        \]
    \end{lem}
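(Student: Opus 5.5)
First I will reduce the integral to a confluent hypergeometric integral by computing $f_{\infty,s,k}(w_{2n}\bfn(u)g_z)$ explicitly. Write $g_z = \bfn(x)\,m(y^{1/2})$ with $m(a)=\diag(a,{}^ta^{-1})$. Since $w_{2n}\bfn(u)\bfn(x) = w_{2n}\bfn(u+x)$, the substitution $u\mapsto u-x$ pulls out the factor $\overline{\psi_\infty(\tr(h_\infty(u-x)))}$. Here $\psi_\infty(\tr(h_\infty x))$ contributes $\exp(2\pi\sqrt{-1}\,\tr(h_\infty x))$. We are left with $\int f_{\infty,s,k}(w_{2n}\bfn(u)m(y^{1/2}))\exp(-2\pi\sqrt{-1}\tr(h_\infty u))\,du$.

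Next I use the standard Iwasawa formula for the weight-$k$ standard section. For $g=\begin{pmatrix}A&B\\C&D\end{pmatrix}$ one has
\[
f_{\infty,s,k}(g)=\det(\sqrt{-1}\,C+D)^{-k}\,|\det(\sqrt{-1}\,C+D)|^{k-(s+n+1/2)}.
\]
This holds because the section is left $P$-equivariant with exponent $s+n+1/2$ (normalized induction for $\Sp_{4n}$, with $\rho=(2n+1)/2$) and transforms by $\det^{-k}$ of the $K$-factor $\sqrt{-1}C+D$. It agrees with $f_k$ at $s=k-(2n+1)/2$.

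For $w_{2n}\bfn(u)m(y^{1/2})$ we have $C=-y^{1/2}$ and $D=-u\,y^{-1/2}$, so $\sqrt{-1}C+D=-(u+\sqrt{-1}y)y^{-1/2}$. Writing $|\det(u+\sqrt{-1}y)|^{2}=\det(u+\sqrt{-1}y)\det(u-\sqrt{-1}y)$, the integrand becomes
\[
\det(y)^{(s+n+1/2)/2}\det(u+\sqrt{-1}y)^{-(s+k+n+1/2)/2}\det(u-\sqrt{-1}y)^{-(s-k+n+1/2)/2}
\]
up to sign and branch factors. The branch conventions fixed in the definition of $\xi$ are chosen precisely to absorb these factors; I would check this carefully at $u=0$, $y=1$. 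Integrating against $\exp(-2\pi\sqrt{-1}\tr(h_\infty u))$ gives $\xi(y,h_\infty;(s+k+n+1/2)/2,(s-k+n+1/2)/2)$, which is the first formula.

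For the specialization $s=k-n-1/2$, the parameters become $(k,0)$. So I need $\int\det(u+\sqrt{-1}y)^{-k}e^{-2\pi\sqrt{-1}\tr(hu)}\,du$, which is the classical Siegel integral (Shimura, Confluent hypergeometric functions, (1.16)/(1.29); equivalently Shimura's book \S A). For $h>0$ it equals
\[
\frac{(-2\pi\sqrt{-1})^{Nk}\,2^{-N(N-1)/2}\,\pi^{N(N-1)/4}}{\Gamma_N(k)}\det(h)^{k-(N+1)/2}e^{-2\pi\tr(hy)},
\]
with $N=2n$ and the normalization of $\Gamma_N$ absorbing the $\pi$ powers. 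Combining $e^{2\pi\sqrt{-1}\tr(hx)}e^{-2\pi\tr(hy)}=\exp(2\pi\sqrt{-1}\tr(hz))$ with $\det(y)^{k/2}$ then yields the claimed expression, with $2^{-2n^2+n}=2^{-N(N-1)/2}$.

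The main obstacle is bookkeeping rather than ideas. The constant in the Siegel integral depends on the measure $du$ (the product over $i\le j$) and on the branch and sign conventions for $\det(u\pm\sqrt{-1}y)^{s}$, including the sign from $-(u+\sqrt{-1}y)$. I would pin these down by checking the case $N=1$ against the one-variable formula $\int(u+\sqrt{-1}y)^{-k}e^{-2\pi\sqrt{-1}hu}\,du=(-2\pi\sqrt{-1})^k h^{k-1}e^{-2\pi hy}/\Gamma(k)$. I would then match the general constant to Shimura's statement.
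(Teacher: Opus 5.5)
Your route is the paper's: use the Iwasawa decomposition of $w_{2n}\bfn(u)g_z$ to reduce to $\xi(y,h_\infty;\alpha,\beta)$, then evaluate at $(\alpha,\beta)=(k,0)$ via Shimura's formula. The first half is correct as written. There is in fact no sign or branch issue to chase: $\det\bigl(-(u+\sqrt{-1}\,y)y^{-1/2}\bigr)=(-1)^{2n}\det(u+\sqrt{-1}\,y)\det(y)^{-1/2}$. Moreover, with Shimura's branches one has $\det(z)^{a}\overline{\det(z)}^{\,a}=|\det z|^{2a}$, and $\det(z)^{-k}\det(z)^{a}=\det(z)^{a-k}$ for integral $k$.

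The error is in the constant you quote for the Siegel integral: the factor $\pi^{N(N-1)/4}$ in the numerator is spurious. With $\Gamma_N(k)=\pi^{N(N-1)/4}\prod_{j=0}^{N-1}\Gamma(k-j/2)$ as in the paper, the correct value for $h>0$ is
\[
\xi(y,h;k,0)=(-2\pi\sqrt{-1})^{Nk}\,2^{-N(N-1)/2}\,\Gamma_N(k)^{-1}\det(h)^{k-(N+1)/2}e^{-2\pi\tr(hy)}.
\]
No renormalization of $\Gamma_N$ rescues your version: removing the $\pi$-power from $\Gamma_N$ would put $\pi^{N(N-1)/4}$ in the denominator, not the numerator. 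As written, your formula gives the lemma's right-hand side times $\pi^{n(2n-1)/2}$, so the final ``combining'' step does not produce the stated expression.

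Your proposed $N=1$ check cannot detect this, because $\pi^{N(N-1)/4}$ and $2^{-N(N-1)/2}$ both equal $1$ there. Here is a check that does:
\begin{itemize}
\item Write $\det(u+\sqrt{-1}\,y)^{-k}=\sqrt{-1}^{\,-Nk}\,\Gamma_N(k)^{-1}\int_{Y>0}e^{-\tr(Y(y-\sqrt{-1}\,u))}\det(Y)^{k-(N+1)/2}\,dY$.
\item Apply Fourier inversion on $\Sym_N(\bbR)$ for the pairing $\tr(Yu)$. In the coordinates $(u_{ij})_{i\le j}$ this pairing has determinant $2^{N(N-1)/2}$, which is exactly where $2^{-N(N-1)/2}$ comes from.
\item Evaluate at $Y=2\pi h$. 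The factor $(2\pi)^{N(N+1)/2}$ from inversion and $(2\pi)^{Nk-N(N+1)/2}$ from $\det(2\pi h)^{k-(N+1)/2}$ combine to exactly $(2\pi)^{Nk}$, with no leftover power of $\pi$.
\end{itemize}

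Be aware also that this computation, like the ``classical Siegel integral'' itself, converges absolutely only for $k>2n$. Here $k=s_0'+n\le 2n$ whenever $s_0'\le n$, in particular in the key case $s_0'=1$. The value at $s=k-n-1/2$ must therefore be read as the analytic continuation in $s$ of your first formula. That continuation is what Shimura's (7.12) supplies, via the holomorphy of $\xi$ in $(\alpha,\beta)$, and it is what the paper relies on.
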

    \begin{proof}
        The first statement follows from a straightforward computation using the Iwasawa decomposition of $w_{2n}\bfn(u)g_z$.
        The last statement follows from the first statement and \cite[(7.12)]{83_Shimura_On_Eis_ser}.
    \end{proof}
With the above preliminaries, we study the Fourier coefficients of Eisenstein series of parallel weight $k$ at $s = k - n -1/2$.
For $z = (z_v)_v = (x_v + \sqrt{-1}\, y_v)_v \in \prod_{v \mid \infty} \frakH_{2n}$, let $g_z$ be the element in $G_{2n}(\bbA_F)$ such that the finite part of $g_z$ is $1$ and $g_v = g_{z_v}$ for $v \mid \infty$.
    We consider the Fourier coefficients of the Siegel Eisenstein $E(g_z, s, f)$.
    When the Eisenstein series has parallel weight $k$, it can be expressed as
    \[
        E(z, s, f) =
        \prod_{v \mid \infty}
        \det(y_{v})^{-k/2} \cdot E(g_z, s, f).
    \]
    
    \begin{prop}\label{prop_FC_Eis_ser}
    Suppose that $\lambda_v = (k,\ldots,k)$ with $k = s_0' + n$.
    For a totally positive definite $h \in \Sym_{2n}(F)$, the $h$-th Fourier coefficient of $E(g_z, s_0'-1/2, f)$ is zero if $h_v \not\in \frakd_v^{-1} \frakp_v^{-m_v} \Sym_{2n}(\calO_v)^*$ for some $v < \infty$.
    If $h_v\in \frakd_v^{-1} \frakp_v^{-m_v} \Sym_{2n}(\calO_v)^*$ for every $v < \infty$, the $h$-th Fourier coefficient is given by
    \begin{align*}
        &C(h)D_F^{-n(2n+1)/2}\cdot 
        \frac{L^S(k-n, \mu\chi_{D_h})}{L^S(k, \mu) \prod_{j=1}^n L^S(2k-2j, \mu^2)} \prod_{v \in \bfc} F(B_v, \mu_v(\varpi_v)q^{-k}_v)\\
        &\qquad\cdot \prod_{v \in S_1} 
        \mu_v(\tau_v)^{-2n}
        c(n-k+1/2 ,\mu_v^{-1},B_v,\psi_{v,\bdelta_v^{-1}}^{-1})\mu_v(\det(B_v))|\det(B_v)|_v^{-1/2} \\
        &\qquad \qquad\cdot \prod_{v \in S_1 \sqcup S_2}
        \mu_v(\tau_v)^{2n}
        \psi_{v,\bdelta_v^{-1}}(\tr(B_v'\tau_v^{-1}))\cdot
        \prod_{v \mid \infty}(-2\pi\sqrt{-1})^{2nk}\Gamma_{2n}(k)^{-1}
        \det\left(h_{v}\right)^{-1/2}.
        \end{align*}
        Here, $C(h)$ is a rational constant defined by
        \begin{align*}
        C(h) 
        &=
        2^{d(-2n^2+n)} \cdot
        \prod_{v \in S_1 \sqcup S_2}
        \vol(\Sym_{2n}(\frakp_v^{m_v}))\mu_v(-1)^n
        \cdot 
        \prod_{v \in S_1} |\det(B_v)|^{k-n}_v
        \cdot 
        \prod_{v \mid \infty} \det(h_v)^{k-n}
        \end{align*}
        and $\bfc$ denotes the finite set of unramified places for which $F(B_v, X)$ is not constant. 
\end{prop}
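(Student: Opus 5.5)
The Fourier coefficient will be computed by the standard unfolding. For $\mathrm{Re}(s)$ large, the $h$-th Fourier coefficient of $E(g_z,s,f)$ with $\det h \neq 0$ is
\[
\int_{\Sym_{2n}(F)\bs\Sym_{2n}(\bbA_F)} E(\bfn(u)g_z,s,f)\,\overline{\psi_F(\tr(hu))}\,du^{\Tam}.
\]
Only the big cell $P_{2n}(F)w_{2n}N_{2n}(F)$ contributes, which gives the Euler product
\[
D_F^{-n(2n+1)/2}\prod_v \int_{\Sym_{2n}(F_v)} f_{v,s}(w_{2n}\bfn(u)g_{z,v})\,\overline{\psi_v(\tr(h_vu))}\,du.
\]
Here the factor $D_F^{-n(2n+1)/2}$ comes from the comparison $dx^{\Tam}=D_F^{-n(2n+1)/2}\prod dx_v$ for $\Sym_{2n}$ of size $2n$. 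Both sides continue meromorphically, and by Lemma \ref{lem_holomorphy_Eis_ser} the left side is holomorphic at $s=s_0'-1/2=k-n-1/2$. So it suffices to evaluate each local factor at that point, provided the product has no pole there.

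Each place is handled by an earlier result.
\begin{itemize}
\item At archimedean $v$, Lemma \ref{lem_FC_arch} with $s=k-n-1/2$ gives $2^{-2n^2+n}(-2\pi\sqrt{-1})^{2nk}\Gamma_{2n}(k)^{-1}\det(h_v)^{k-n-1/2}$ times $\det(y_v)^{k/2}\exp(2\pi\sqrt{-1}\tr(h_vz_v))$. The factor $\det(y_v)^{k/2}$ cancels against $\det(y_v)^{-k/2}$ in the definition of $E(z,s,f)$. Splitting $\det(h_v)^{k-n-1/2}=\det(h_v)^{k-n}\det(h_v)^{-1/2}$ accounts for the archimedean parts of $C(h)$ and of the displayed product.
\item At $v\in S_1\sqcup S_2$, Lemma \ref{lem_FC_ram} gives the vanishing unless $B_v\in\frakp_v^{-m_v}\Sym_{2n}(\calO_v)^*$, i.e.\ $h_v\in\frakd_v^{-1}\frakp_v^{-m_v}\Sym_{2n}(\calO_v)^*$. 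Otherwise it gives the value $C_v\psi_{v,\bdelta_v^{-1}}(\tr(B_v'\tau_v^{-1}))$ times either $\mu_v(\tau_v)^{2n}$ or $\mu_v(\det B_v)|\det B_v|_v^{s}c(-s,\mu_v^{-1},B_v,\psi^{-1}_{v,\bdelta_v^{-1}})$. At $s=k-n-1/2$, write $|\det B_v|^{s}_v=|\det B_v|_v^{k-n}|\det B_v|_v^{-1/2}$. The factors $\mu_v(\tau_v)^{\pm 2n}$ are bookkept so that the net $\mu_v(\tau_v)^{2n}$ from the definition of $f_{v,s}$ for $v\in S_1$ is recorded as in the statement.
\item At unramified finite $v$, first note that $\psi_v$ has conductor $\frakd_v^{-1}$. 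Changing variables to $\psi_{v,\bdelta_v^{-1}}$ turns the local integral into the Siegel series $b(B_v,\cdot)$ twisted by $\mu_v$: one replaces $q_v^{-s}$ by $\mu_v(\varpi_v)q_v^{-s}$, at the shifted argument $s+n+1/2=k$. Theorem \ref{thm_FC_unram} then gives $\gamma(B_v,\mu_v(\varpi_v)q_v^{-k})F(B_v,\mu_v(\varpi_v)q_v^{-k})$, and this vanishes unless $h_v\in\frakd_v^{-1}\Sym_{2n}(\calO_v)^*$. Expanding $\gamma(B_v,X)$, the product over $v\notin S$ yields $L^S(k-n,\mu\chi_{D_h})/(L^S(k,\mu)\prod_{j=1}^nL^S(2k-2j,\mu^2))$, with $F(B_v,\cdot)=1$ outside the finite set $\bfc$.
\end{itemize}

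The main obstacle is justifying evaluation at $s=k-n-1/2$ termwise, including the nonvanishing and finiteness of the partial $L$-quotient. In the edge case $s_0'=1$, $F=\bbQ$, $\mu^2=\mathbf 1$, the numerator and denominator may have zeros or poles, for example $L^S(2k-2n,\mu^2)=\zeta^S(2)$ is fine but $L^S(k-n,\mu\chi_{D_h})=L^S(1,\cdot)$ can be delicate. The plan there is:
\begin{itemize}
\item Establish the identity of meromorphic functions in $s$ first, from the Euler product valid for $\mathrm{Re}(s)\gg0$.
\item Invoke Lemma \ref{lem_holomorphy_Eis_ser} to know the left side is holomorphic at $k-n-1/2$.
\item Check that each local ramified factor $c(-s,\mu_v^{-1},B_v,\cdot)$ is finite there, via Theorem \ref{Whittaker_Int_op}. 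When $\chi_{D_h}\mu$ is trivial, the possible pole of $L^S(1,\mu\chi_{D_h})$ should be cancelled by a zero of the factor $\gamma_v(s+1/2,\mu_v\chi_{D_B},\cdot)$ coming from $v\in S_1$. This is exactly where the assumption $S_1\ne\emptyset$ enters.
\end{itemize}
Collecting the rational constants then gives $C(h)$ as stated.
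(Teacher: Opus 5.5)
Your proposal is correct and follows the paper's proof: unfold to the Euler product $D_F^{-n(2n+1)/2}\prod_v(\cdots)$ and insert Lemma \ref{lem_FC_arch}, Lemma \ref{lem_FC_ram} and Theorem \ref{thm_FC_unram}. The one cosmetic difference is that you read off the support condition from the local factors, whereas the paper deduces it from right invariance of $E$ under $\prod_{v<\infty}N_{2n}(\frakp_v^{m_v})$.

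Your worry about a pole of $L^S(1,\mu\chi_{D_h})$ is unfounded, and $S_1\neq\emptyset$ does not enter the way you describe. For totally positive $h$ one has $\chi_{D_h,v}=\sgn^n$ at each real $v$, so $\mu\chi_{D_h}$ is $\sgn^{s_0'}$ at every real place and is nontrivial when $s_0'=1$. Hence every factor is finite at $s=k-n-1/2$, and $S_1\neq\emptyset$ is used only through Lemma \ref{lem_holomorphy_Eis_ser}, not through any cancellation in this coefficient.
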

\begin{proof}
    Unfolding the Eisenstein series shows that the $h$-th Fourier coefficient of $E(g_z, s,f)$ equals
        \begin{align*}
        &\int_{\Sym_{2n}(\bbA_F)} f_{s_0'-1/2}(w_{2n} \bfn(x)g_z) \overline{\psi_F(\tr(hx))} \ dx \\
        &= 
        D_F^{-n(2n+1)/2}\prod_{v} \int_{\Sym_{2n}(F_v)} f_{v,s_0'-1/2}(w_{2n}\bfn(x)g_z) \overline{\psi_{v}(\tr(h_v x))} \ dx.
        \end{align*}
        Consider right translation by $u = (u_v)_v \in \prod_{v < \infty}N_{2n}(\frakp_{v}^{m_v})$.
        By the definition of Eisenstein series, the Eisenstein series is invariant under right translation by $u$.
        Hence, if the $h$-th Fourier coefficient is nonzero, we have $\tr(h_v x_v) \in \frakd_v^{-1}$ for every $x_v \in \Sym_{2n}(\frakp_v^{m_v})$, equivalently, $h_v \in \frakd_v^{-1}\frakp_v^{-m_v} \Sym_{2n}(\calO_v)^*$ for any $v<\infty$.
        When $h_v \in \frakd_v^{-1}\frakp_v^{-m_v} \Sym_{2n}(\calO_v)^*$ for any $v < \infty$,
        combining Theorem \ref{thm_FC_unram}, Lemmas \ref{lem_FC_ram} and \ref{lem_FC_arch}, we obtain the statement.
\end{proof}

\subsection{Arithmeticity of Eisenstein series}
    In this subsection, we prove the rationality of Eisenstein series.
    For a Hecke character $\chi$ of $F^\times F_{\infty,+}^\times \bs \bbA_F^\times$, let $G(\chi)$ be the Gauss sum defined by $G(\chi) = D_F^{-1/2} \prod_{v < \infty} \vep_v(0,\chi_v,\psi_v)$.
    For $\sigma \in \Aut(\bbC)$, let $\tau_{\sigma,v}$ be the element in $\calO_v^\times$ defined in \S \ref{subsec_Gal_act_on_AR}. 

\begin{lem}\label{lem_arith_Gauss_sum_L-val}
    Let $\chi$ be a Hecke character with parallel signature, and $m \geq 1$ be a critical point of $L(s,\chi)$.
    For $\sigma \in \Aut(\bbC)$, with the above notation, the following statements hold.
    \begin{enumerate}
        \item $\sigma(G(\chi)) = {^\sigma \chi}(\tau_\sigma) G({^\sigma \chi})$.
        \item For a Hecke character $\chi'$, we have
        \[
        \sigma\left(\frac{G(\chi\chi')}{G(\chi)G(\chi')}\right) = \frac{G({^\sigma \chi} {^\sigma \chi'})}{G({^\sigma \chi})G({^\sigma \chi'})}.
        \]
        \item We have 
        \[
        \sigma
        \left(
        \frac{L^S(m,\chi)}{D_F^{1/2}G(\chi)(2\pi \sqrt{-1})^{dm}}
        \right)
        = 
        \frac{L^S(m,{^\sigma \chi})}{D_F^{1/2}G({^\sigma \chi})(2\pi \sqrt{-1})^{dm}}.
        \]
        \item We have
    \[
    \sigma
    \left(
    \frac{\sqrt{-1}^{nd}L^S(k-n, \mu\chi_{D_h})}
    {D_F^{1/2}G(\mu)(2\pi\sqrt{-1})^{(k-n)d}} \cdot  \prod_{v \mid \infty} \det(h_v)^{-1/2}
    \right) 
    = 
    \frac{\sqrt{-1}^{nd}L^S(k-n, {^\sigma \mu}\chi_{D_h})}{D_F^{1/2}G({^\sigma \mu})(2\pi\sqrt{-1})^{(k-n)d}} \cdot  \prod_{v \mid \infty} \det(h_v)^{-1/2}
    \]
    and $q_v^{-\frac{1}{2}c(\chi_{D_{B_v}})} |\det(B_v)|^{-1/2}_v \in \bbQ$.
    \item We have 
    \[
    \sigma\left(
    \frac{c(n-k+1/2, \mu_v^{-1}, B_v, \psi_{v,\bdelta_v^{-1}})}{\mu_v(\tau_v)^{2n}|\det(B_v)|_v^{1/2}}
    \right)
    =
    \frac{c(n-k+1/2, {^\sigma}\mu_v^{-1}, B_v, \psi_{v,\bdelta_v^{-1}})}{\mu_v(\tau_v\tau_{\sigma,v}^{-1})^{2n}|\det(B_v)|^{1/2}_v}
    \]
    \end{enumerate}
\end{lem}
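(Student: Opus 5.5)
The plan is to reduce all five statements to standard facts about local epsilon factors and to Shimura's arithmeticity of Hecke $L$-values at critical points. For (1), I write $G(\chi) = D_F^{-1/2}\prod_{v<\infty}\vep_v(0,\chi_v,\psi_v)$. By \eqref{def_epsilon_factor}, after passing from $\psi_v$ to $\psi_{v,\bdelta_v^{-1}}$ via $\vep_v(s,\chi_v,\psi_{v,a})=\chi_v(a)|a|_v^{s-1/2}\vep_v(s,\chi_v,\psi_v)$, each local factor is a finite Gauss sum $\int_{c^{-1}\calO_v^\times}\chi_v^{-1}(x)\psi_{v,\bdelta_v^{-1}}(x)\,dx$ times rational numbers and values of $\chi_v$.

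Applying $\sigma$ and using $\sigma(\psi_F(x))=\psi_F(\tau_\sigma x)$, I substitute $x\mapsto \tau_{\sigma,v}^{-1}x$. This produces the factor ${^\sigma\chi_v}(\tau_{\sigma,v})$ at each ramified $v$; at unramified $v$ the factor is $1$ anyway since $\tau_{\sigma,v}$ is a unit. The product over $v$ of these factors is ${^\sigma\chi}(\tau_\sigma)$. The element $D_F^{-1/2}$ together with the factors $|\bdelta_v|^{-1/2}$ combines into something fixed by $\sigma$ up to a sign that cancels: indeed $\prod_v|\bdelta_v|_v^{-1/2}=D_F^{1/2}$, so $G(\chi)$ is really $\prod_v \chi_v(\bdelta_v)^{-1}\cdot(\text{local Gauss sums})$. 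Statement (2) then follows immediately from (1), since the factors ${^\sigma\chi}(\tau_\sigma){^\sigma\chi'}(\tau_\sigma)$ and ${^\sigma(\chi\chi')}(\tau_\sigma)$ cancel.

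For (3), I will quote Shimura's theorem on critical values of Hecke $L$-functions of totally real fields, namely that $L(m,\chi)/(G(\chi)(2\pi\sqrt{-1})^{dm})$ is $\Aut(\bbC)$-equivariant up to $D_F^{1/2}$; it is proved, e.g., via the Eisenstein series on $\GL_2$ or Siegel–Klingen. Removing the Euler factors at $S$ is harmless, because $L_v(m,\chi_v)^{-1}=1-\chi_v(\varpi_v)q_v^{-m}$ is $\sigma$-equivariant. The parity condition for criticality matches the parallel signature of $\chi$.

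For (4), I apply (3) to $\chi=\mu\chi_{D_h}$ at $m=k-n$. The ratio $G(\mu\chi_{D_h})/G(\mu)$ is controlled by (2) together with the classical formula $G(\chi_{D_h})=\sqrt{N(D_h)\cdots}$ up to a rational factor: it equals a rational multiple of $\sqrt{-1}^{nd}\prod_{v\mid\infty}\det(h_v)^{1/2}$ times $\prod_v q_v^{c/2}$-type factors. The point is that $\sigma$ acts on $\sqrt{D}$ through the quadratic character, and this matches $\sigma$ acting on $G(\chi_{D_h})$ by (1), since $\chi_{D_h}(\tau_\sigma)=\sigma(\sqrt{D})/\sqrt{D}$. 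The rationality of $q_v^{-c(\chi_{D_{B_v}})/2}|\det B_v|_v^{-1/2}$ follows from the parity relation: $c(\chi_{D})\equiv \ord_v(D)\bmod 2$ when $v\nmid 2$, and the $2$-adic case is handled by the even conductor exponent.

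For (5), I expand $c(n-k+1/2,\mu_v^{-1},B_v,\psi_{v,\bdelta_v^{-1}})$ using Theorem \ref{Whittaker_Int_op}. The $\gamma$-factors at integer or half-integer shifts are rational functions of $\mu_v(\varpi_v)$ and $q_v^{-s}$, multiplied by epsilon factors. Each $\vep_v$ transforms under $\sigma$ as in (1), giving factors of $\mu_v$ evaluated at $\tau_{\sigma,v}$. I then count the total power: the $\gamma_v(\cdot,\mu_v^{-1})$ term contributes $1$, each of the $n$ terms $\gamma_v(\cdot,\mu_v^{-2})$ contributes $2$, and the factor $\gamma_v(s+1/2,\mu_v^{-1}\chi_{D_B})$ contributes $-1$. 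The remaining $\vep_v(1/2,\chi_{D_B})$ and the half-powers $q_v^{\pm c/2}$ combine with $|\det B_v|^{1/2}$ into a rational quantity, using the rationality statement in (4). I expect this bookkeeping to be the main obstacle, specifically tracking the $\tau_{\sigma,v}$-twists and the half-integral powers of $q_v$ so that exactly $\mu_v(\tau_{\sigma,v})^{-2n}$ survives.
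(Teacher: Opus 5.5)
\textbf{Gap: the dyadic case of the rationality claim in (4).} You say the $2$-adic case ``is handled by the even conductor exponent''. That is false. A ramified quadratic character of a dyadic field has odd conductor exponent $2\ord_v(2)+1$ whenever $\ord_v(D)$ is odd; for example, $\bbQ_2(\sqrt{2})/\bbQ_2$ has conductor exponent $3$. If the exponent were always even, you would need $\ord_v(\det B_v)$ to be even, and nothing guarantees this. So, as written, $q_v^{-c(\chi_{D_{B_v}})/2}|\det B_v|_v^{-1/2}\in\bbQ$ is not proved for $v\mid 2$.

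This also breaks (5) at dyadic $v\in S_1$. There you use exactly this rationality to absorb the sign of $\sigma(q_v^{-c/2})$ against $|\det B_v|_v^{1/2}$.

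The missing input is the conductor--discriminant formula. The ideal $\frakp_v^{c(\chi_D)}$ is the relative discriminant of $F_v(\sqrt{D})/F_v$, and this discriminant equals $4Dx^2\calO_v$ for some $x\in F_v^\times$. Hence $c(\chi_D)\equiv\ord_v(D)\pmod 2$ at every place, dyadic or not. Since $D_{B_v}=(-4)^n\det B_v$, this gives $c(\chi_{D_{B_v}})\equiv \ord_v(\det B_v)\pmod 2$, so the quantity in question is an integral power of $q_v$. This is precisely the paper's argument.

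\textbf{The rest.} Parts (1), (2), (3) and (5) follow the paper's route. In (5) your count $1+2n-1=2n$ of $\tau_{\sigma,v}$-twists is correct, since the two $\chi_{D_{B_v}}(\tau_{\sigma,v})$ twists cancel. In (1) there is a harmless slip: $G(\chi)=\prod_{v<\infty}\chi_v(\bdelta_v)\,\vep_v(0,\chi_v,\psi_{v,\bdelta_v^{-1}})$, with $\chi_v(\bdelta_v)$ rather than its inverse.

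For the first assertion of (4) you take a different route, and it is valid once made precise. After applying (3) and (2), you need the $\sigma$-invariance of $\sqrt{-1}^{nd}G(\chi_{D_h})\prod_{v\mid\infty}\det(h_v)^{-1/2}$. You compare the twist $\chi_{D_h}(\tau_\sigma)$ coming from (1) with the action of $\sigma$ on $\sqrt{D}$. To make this work:
\begin{itemize}
\item $D$ must be taken to be $N_{F/\bbQ}(D_h)=(-4)^{nd}\prod_{v\mid\infty}\det(h_v)$.
\item The identity $\chi_{D_h}(\tau_\sigma)=\sigma(\sqrt{D})/\sqrt{D}$ needs the norm compatibility of Hilbert symbols, so that $\chi_{D_h}$ restricted to $\bbA_\bbQ^\times$ is the quadratic character of $\bbQ(\sqrt{D})$, together with reciprocity over $\bbQ$.
\end{itemize}
The paper instead uses $G(\chi_{D_h})^2=(-1)^{nd}\prod_{v<\infty}q_v^{c(\chi_{D_h,v})}$ and the rationality claim to show that this quantity is rational. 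Your route avoids the parity fact at this step, at the cost of the class field theory input. You still need the corrected parity fact for the second assertion of (4) and for (5).
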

\begin{proof}   
    The first statement follows from the definition of the Gauss sum \eqref{def_epsilon_factor} (see also \cite[Lemma A.3]{22_Shih-yu_Chen}), and the second is immediate.
    For the third, see \cite[Theorem 18.12]{00_Shimura} and \cite[Theorem 3.3(4)]{2023_Chen_Shih-yu_Advance_sym_fourth}.
    For the fourth statement, we first show $q_v^{-\frac{1}{2}c(\chi_{D_{B_v}})}|\det (B_v)|_v^{-1/2} \in \bbQ$.
    If $D_{B_v} \in F_v^{\times 2}$, there is nothing to prove.
    If $D_{B_v} \not\in F_v^{\times 2}$, let $\Delta$ be the discriminant ideal with respect to the quadratic extension $F_v(\sqrt{D_{B_v}})/F_v$.
    By Corollary 2 of \cite[p.240]{1967_Weil_BNT}, the exponent of $\Delta$ is $c(\chi_{D_{B_v}})$.
    There exists $x \in F_v^\times$ such that $\Delta = 4D_{B_v}x^2 \calO_v$.
    Taking the valuation of both sides, we have $\ord_{v}(D_{B_v}) \equiv c(\chi_{D_{B_v}}) \bmod 2$, which shows the claim.
    By the above discussion, it remains to see
    \[
    \sigma(\sqrt{-1}^{nd}G(\chi_{D_h})|N_{F/\bbQ}(\det h)|^{-1/2}) = \sqrt{-1}^{nd} G(\chi_{D_h})|N_{F/\bbQ}(\det h)|^{-1/2}.
    \]
    Since $h$ is totally positive, we have $G(\chi_{D_h})^2 = (-1)^{nd}\prod_{v < \infty} q_v^{c_v'}$, where $c_v'$ is the conductor exponent of $\chi_{D_h,v}$.
    By $\prod_{v \leq \infty} |\det h|_v = 1$ and $q_v^{-\frac{1}{2}c(\chi_{D_{B_v}})}|\det(B_v)|_v^{-1/2} \in \bbQ$, the assertion holds.
    For the last statement, let $m$ be an integer and $\chi_v$ be a character of $F_v^\times$.
    We then have
    \[
    \sigma(\vep_v(0, \chi_v, \psi_{v,\bdelta_v^{-1}})^{-1}\gamma_v(m,\chi_v, \psi_{v,\bdelta_v^{-1}}))
    =
    \vep_v(0, {^\sigma}\chi_v, \psi_{v,\bdelta_v^{-1}})^{-1}\gamma_v(m,{^\sigma}\chi_v, \psi_{v,\bdelta_v^{-1}})
    \]
    Hence, the following expression is Galois equivariant:
    \begin{align*}
        \frac{
        \vep_v(0,\mu_v^{-1}, \psi_{v,\bdelta_v^{-1}})
        \vep_v(0,\mu_v^{-2}, \psi_{v,\bdelta_v^{-1}})^n
        \vep_v(1/2, \chi_{D_B}, \psi_{v,\bdelta_v^{-1}})}{\vep_v(0,\mu_v^{-1}\chi_{D_B}, \psi_{v,\bdelta_v^{-1}})}
        c(n-k+1/2, \mu_v^{-1},B_v, \psi_{v,\bdelta_v^{-1}}).
    \end{align*}
    By the relation $\vep_v(1/2,\chi_{D_{B_v}},\psi_{v,\bdelta_v^{-1}}) = q_v^{-c(\chi_{D_{B_v}})/2} \vep_v(0,\chi_{D_{B_v}}, \psi_{v,\bdelta_v^{-1}})$ and the preceding assertion in Lemma \ref{lem_arith_Gauss_sum_L-val} (4) together with the local analogue of Lemma \ref{lem_arith_Gauss_sum_L-val} (1), (2), we obtain the result.
\end{proof}

        \begin{prop}\label{prop_rationality_Eis_ser}
    Let $f_s$ be the section defined in \S \ref{subsec_choice_global_section}.
    Then, for any $\sigma \in \Aut(\bbC)$, we have 
    \[
    {^\sigma E}(g, s_0'-1/2, f, \mu, (\lambda_v)_v,  (\tau_v)_v)
    =
    E(g, s_0'-1/2,f, {^\sigma \mu}, (\lambda_{\sigma \circ v})_v, (\tau_v\tau_{\sigma,v}^{-1})_v).
    \]
    \end{prop}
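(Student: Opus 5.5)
We first treat the case of parallel scalar weight $\lambda_v = (k,\ldots,k)$ with $k = s_0'+n$. By Lemma \ref{lem_holomorphy_Eis_ser}, $E(g,s_0'-1/2,f)$ then corresponds to a holomorphic Hilbert-Siegel modular form. By the $q$-expansion principle, the Galois action of \eqref{def_Gal_act_AF} acts coefficientwise on Fourier coefficients. It therefore suffices to show that $\sigma$ maps the $h$-th Fourier coefficient given in Proposition \ref{prop_FC_Eis_ser} for the data $(\mu,(\tau_v)_v)$ to the $h$-th coefficient for $({^\sigma\mu},(\tau_v\tau_{\sigma,v}^{-1})_v)$. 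Here the coefficients are normalized as in the definition of ${^\sigma f}$, so that the powers of $\pi\sqrt{-1}$ match. The vanishing condition $h_v \in \frakd_v^{-1}\frakp_v^{-m_v}\Sym_{2n}(\calO_v)^*$ and the coefficient $C(h)$ are unchanged, since $C(h)$ is rational: volumes are rational, $\mu_v(-1)^n=\pm1$, and $|\det B_v|_v$ and $\det(h_v)^{k-n}$ are rational because $k-n$ is an integer.

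We then check the remaining factors one by one.

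\textbf{(a) The global $L$-ratio.} For the factor $L^S(k-n,\mu\chi_{D_h})/(L^S(k,\mu)\prod_j L^S(2k-2j,\mu^2))$, together with $\prod_{v\mid\infty}\det(h_v)^{-1/2}$ and the powers $(-2\pi\sqrt{-1})^{2nk}$, apply Lemma \ref{lem_arith_Gauss_sum_L-val} (3) and (4). The point $k-n=s_0'\ge1$ is critical for $\mu\chi_{D_h}$, and $k$ and $2k-2j$ are critical for $\mu$ and $\mu^2$, the parities matching $\epsilon$. The resulting Gauss sums $G(\mu)^{-1}G(\mu)G(\mu^2)^n$ and powers of $2\pi\sqrt{-1}$ and $D_F^{1/2}$ must cancel up to rational numbers. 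Lemma \ref{lem_arith_Gauss_sum_L-val} (1) and (2) then convert the leftover Gauss sums into the character values ${^\sigma\mu}(\tau_\sigma)^{\pm}$. These character values are exactly what the shift $\tau_v\mapsto \tau_v\tau_{\sigma,v}^{-1}$ absorbs at the places of $S_1\sqcup S_2$; at the other finite places $\mu_v(\tau_{\sigma,v})=1$.

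\textbf{(b) Unramified polynomials.} The factors $F(B_v,\mu_v(\varpi_v)q_v^{-k})$ have integer coefficients by Theorem \ref{thm_FC_unram}, so $\sigma$ simply replaces $\mu_v$ by ${^\sigma\mu_v}$.

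\textbf{(c) Additive character values.} We have $\sigma(\psi_{v,\bdelta_v^{-1}}(\tr(B_v'\tau_v^{-1}))) = \psi_{v,\bdelta_v^{-1}}(\tr(B_v'\tau_{\sigma,v}\tau_v^{-1}))$, which is the factor for the new $\tau$. The factor $\mu_v(\tau_v)^{2n}$ transforms compatibly.

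\textbf{(d) The $S_1$ factors.} For $v\in S_1$, the factor $\mu_v(\tau_v)^{-2n}c(n-k+1/2,\mu_v^{-1},B_v,\psi^{-1})|\det B_v|_v^{-1/2}$ is handled by Lemma \ref{lem_arith_Gauss_sum_L-val} (5). Use that $c(\cdot,\psi^{-1})=c(\cdot,\psi)$, and that $\mu_v(\det B_v)$ goes to ${^\sigma\mu_v}(\det B_v)$.

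\textbf{General weight.} For arbitrary $(\lambda_v)_v$, write $f_{v,s}$ at the critical point as $\frakQ_{k,\lambda_v}\bigl(\frac{1}{2\pi\sqrt{-1}}X_{i+n,j}\bigr)f_k$. The Eisenstein series then equals this differential operator applied to the scalar-weight Eisenstein series, because $E$ commutes with the $\frakg$-action on sections. Since $\frakQ$ has rational coefficients, Lemma \ref{lem_arith_AF} (the $X^+$ case) moves $\sigma$ through the operator, at the cost of permuting the places to $\sigma\circ v$. This produces the weights $(\lambda_{\sigma\circ v})_v$. The scalar weight $k$ is independent of $v$, so the scalar case applies.

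The main obstacle is the bookkeeping in (a) and (d). One must confirm that all Gauss sums, all $\vep$-factors hidden in $c(\cdot)$ (via Theorem \ref{Whittaker_Int_op}), the power $D_F^{-n(2n+1)/2}$ and the powers of $\pi$ combine into a $\sigma$-equivariant expression with no leftover transcendental normalization. One must also confirm that the $\tau_{\sigma,v}$-twists produced by Lemma \ref{lem_arith_Gauss_sum_L-val} (1) at places of $S_1\sqcup S_2$ match precisely the substitution $\tau_v\mapsto\tau_v\tau_{\sigma,v}^{-1}$. I would first carry out this check for $F=\bbQ$ and $n=1$ as a sanity test, and then track the exponents of $\mu_v(\tau_{\sigma,v})$ in general.
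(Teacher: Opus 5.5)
\textbf{Gap: the coefficient comparison only covers nondegenerate $h$.} Your overall route is the paper's. You compare Fourier coefficients in parallel scalar weight $k=s_0'+n$ using Proposition \ref{prop_FC_Eis_ser} and Lemma \ref{lem_arith_Gauss_sum_L-val}, then reach general weight by applying $\frakQ_{k,\lambda_v}$ and Lemma \ref{lem_arith_AF}. The problem is the sentence ``by the $q$-expansion principle \ldots\ it therefore suffices'' to check the coefficients of Proposition \ref{prop_FC_Eis_ser}. That proposition only computes the $h$-th coefficient for totally positive \emph{definite} $h\in\Sym_{2n}(F)$, but the $q$-expansion principle needs \emph{all} coefficients. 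A holomorphic form on $G_{2n}$ also has coefficients at totally positive semidefinite $h$ of rank $<2n$, including the constant term, and nothing in your argument controls how $\sigma$ acts on them. Computing them would require the contributions of the smaller Bruhat cells of $P_{2n}(F)\bs G_{2n}(F)$, i.e.\ further intertwining operators applied to $f_s$. At $v\in S_1$ these would act on $\bfh_{v,s}$, which is itself defined through $M_{v,w_{2n},-s}$, and no such computation is available in the paper.

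\textbf{The missing step.} The paper concludes by avoiding the degenerate coefficients altogether. Your checks (a)--(d) show that the difference ${^\sigma E}(g,s_0'-1/2,f,\mu,(\lambda_v)_v,(\tau_v)_v)-E(g,s_0'-1/2,f,{^\sigma\mu},(\lambda_{\sigma\circ v})_v,(\tau_v\tau_{\sigma,v}^{-1})_v)$ is a holomorphic Hilbert--Siegel modular form of degree $2n$ and parallel weight $k$. Its coefficients vanish at all totally positive definite $h$, so it is a singular form. By \cite[Theorem 5.6]{94_Shimura}, a nonzero singular form of degree $2n$ has weight at most $n$. Since $k=s_0'+n\geq n+1$, the difference is zero. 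With this step inserted, your proof coincides with the paper's.

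\textbf{Two minor bookkeeping corrections.} First, the individual factors $\det(h_v)^{k-n}$ are not rational. Only the product $\prod_{v\mid\infty}\det(h_v)^{k-n}=N_{F/\bbQ}(\det h)^{k-n}$ is, and that is all $C(h)$ needs. Second, in (a) the Gauss sums do not cancel up to rationals: a factor $G(\mu^2)^{-n}$ survives. Under Lemma \ref{lem_arith_Gauss_sum_L-val}(1) it produces the character value ${^\sigma\mu}^2(\tau_\sigma)^{-n}$. That value, together with the twist in Lemma \ref{lem_arith_Gauss_sum_L-val}(5) at $v\in S_1$, is what the substitution $\tau_v\mapsto\tau_v\tau_{\sigma,v}^{-1}$ absorbs, as your next sentence indicates.
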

    \begin{proof}
        We first treat the case where $\lambda = (k,\ldots,k)$, for which $s_0'-1/2 = k-n-1/2$.
        By Lemma \ref{lem_holomorphy_Eis_ser}, the Eisenstein series is a holomorphic modular form.
        Let $h \in \Sym_n(F)$ be a totally positive definite matrix and $a(h,\mu, (\lambda_v)_v, (\tau_v)_v)$ be the $h$-th Fourier coefficient of the Eisenstein series $E(g_z, k-n-1/2, f, \mu, (\lambda_v)_v, (\tau_v)_v)$.
        By Proposition \ref{prop_FC_Eis_ser}, Lemma \ref{lem_arith_Gauss_sum_L-val} and $\Gamma_{2n}(k) \in \pi^{n^2} \bbQ^\times$, we have 
        $\sigma(a(h,\mu,(\lambda_v)_v, (\tau_v)_v) = a(h, {^\sigma \mu},(^\sigma\lambda_v)_v, (\tau_v\tau_{\sigma,v}^{-1})_v)$
        for any $\sigma \in \Aut(\bbC)$.
        Hence, the $h$-th Fourier coefficient of the difference
        \[
        {^\sigma E}(g, s_0'-1/2, f, \mu, (\lambda_v)_v,  (\tau_v)_v)
    -
    E(g, s_0'-1/2,f, {^\sigma \mu}, (\lambda_{\sigma \circ v})_v, (\tau_v\tau_{\sigma,v}^{-1})_v)
        \]
        is zero.
        Such a modular form is singular and, by \cite[Theorem 5.6]{94_Shimura}, has weight at most $n$.
        Since the Eisenstein series is of weight $k \geq n + 1$, the difference is zero.
        Hence, the statement holds when the weight is parallel scalar weight $(k,\ldots,k)$ with $k = s_0' + n$.

        For general weights, by Lemma \ref{lem_arith_AF}, the differential operator \eqref{def_diff_op} respects the Galois action.
        This proves the statement.
    \end{proof}
    
\subsection{Integral representation}

    A fundamental feature of the standard $L$-functions is their integral representation.
    We evaluate this integral representation in two different ways: using the zeta integral, and using the restriction to $G_n(\bbA_F) \times G_n(\bbA_F)$.
    Comparing the two expressions, we deduce the arithmeticity of the critical values.
    We first compute the integral representation.
    Let $\langle \,,\,\rangle$ be the Petersson inner product with respect to $dg^\Tam$.

    \begin{thm}[{\cite{87_Garrett, GPSR_87, 00_Shimura}}]\label{thm_int_rep_global_L}
        Let $h_s = \otimes_v h_{v,s}$ be a meromorphic section of $I_{2n}(s,\mu)$, and let $T$ be a finite set of places such that for every place $v \not\in T$, $h_{v,s}, \pi_v$ and $\mu_v$ are unramified.
        Assume that $h_{v,s}(1_{4n}) = 1$ for $v \not\in T$.
        For a cusp form $\varphi$,  we have
        \[
        \langle E(\iota(-,y), s, h),  \overline{\varphi}\rangle =
        \frac{D_F^{-n(2n+1)/2}\pi^{dn(n+1)/2}}
{\prod_{j=1}^{n}\Gamma(j)^d\zeta_F(2j)} 
    \cdot\frac{L^{T}(s+1/2,\pi,\mu)}{b_{2n}^T(s,\mu)} \left(\prod_{v \in T} T_{v}(s, h_{v,s})\varphi\right)(\check{y}).
        \]
    \end{thm}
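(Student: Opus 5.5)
We follow the standard doubling argument of Piatetski-Shapiro--Rallis and Garrett. Throughout we work first in the half plane of absolute convergence, $\mathrm{Re}(s) \gg 0$, and conclude by meromorphic continuation. The plan is to unfold the Eisenstein series along the double coset decomposition $P_{2n}(F)\bs G_{2n}(F)/\iota(G_n(F)\times G_n(F))$, discard the negligible orbits by cuspidality, factor the surviving integral as a product of local zeta integrals, evaluate the unramified ones with Theorem \ref{thm_unr_zeta_int}, and finally convert the adelic measure into the local measures $dg_v$ using \eqref{Tamagawa_measure_comparison}.

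\emph{Step 1: unfolding.} We write
\[
\langle E(\iota(-,y),s,h),\overline{\varphi}\rangle=\int_{G_n(F)\bs G_n(\bbA_F)}E(\iota(g,y),s,h)\varphi(g)\,dg^\Tam
\]
and insert the sum over $\gamma\in P_{2n}(F)\bs G_{2n}(F)$. The double cosets are indexed by the rank of the intersection of a Lagrangian with the diagonal-type subspace. Every orbit other than the open one, represented by $\delta$, has a stabilizer containing the unipotent radical of a proper parabolic of one factor $G_n$. The corresponding integral then contains a constant term of $\varphi$ along that radical, which vanishes by cuspidality. On the open orbit the stabilizer of $\delta$ in $G_n\times G_n$ is the twisted diagonal $\{(g,\check g)\}$, because $\delta\iota(g,\check g)\delta^{-1}\in P_{2n}$ as noted in \S\ref{LR_local_factors}. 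After the change of variables $g\mapsto g\check y$, the integral collapses to
\[
\int_{G_n(\bbA_F)}h_s(\delta\,\iota(g,1_{2n}))\,\varphi(g\check y)\,dg^\Tam .
\]
The paper defines the local integrals with $\delta^{-1}$ rather than $\delta$. We reconcile this by left-multiplying by a suitable element of $P_{2n}(F)$, which is harmless for $h_s$, or equivalently by the substitution $g\mapsto g^{-1}$ combined with the relation $g\mapsto\check g$. We expect this to be pure bookkeeping.

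\emph{Step 2: factorization.} We regard $\varphi\mapsto\int_{G_n(\bbA_F)}h_s(\delta^{-1}\iota(g,1))\,r(g)\varphi\,dg$ as an operator on $\pi=\otimes_v\pi_v$. Since $h_s=\otimes_v h_{v,s}$ and the measure is a product, this operator factors, up to the global constant $D_F^{-n(2n+1)/2}\pi^{dn(n+1)/2}/\prod_{j=1}^n\Gamma(j)^d\zeta_F(2j)$ from \eqref{Tamagawa_measure_comparison}, as $\otimes_v T_v(s,h_{v,s})$. For $v\notin T$ both $h_{v,s}$ and $\pi_v$ are unramified, so $T_v(s,h_{v,s})$ is $K_v$-equivariant and acts on the spherical line by a scalar. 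Pairing with the spherical vector and applying Theorem \ref{thm_unr_zeta_int} identifies that scalar as $L_v(s+1/2,\pi_v^\vee,\mu_v)/b_{v,2n}(s,\mu_v)$. By the self-duality \eqref{selfduality_local_factor} this equals $L_v(s+1/2,\pi_v,\mu_v)/b_{v,2n}(s,\mu_v)$. Taking the product over $v\notin T$ yields $L^T(s+1/2,\pi,\mu)/b^T_{2n}(s,\mu)$, and the operators at the places $v\in T$ remain as $\prod_{v\in T}T_v(s,h_{v,s})$, applied to $\varphi$ and evaluated at $\check y$. The statement then follows by meromorphic continuation. For the sections of \S\ref{subsec_choice_global_section} this continuation is legitimate because the operators $T_v$ continue meromorphically, as noted in \S\ref{LR_local_factors}.

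\emph{Main obstacle.} The delicate step is making the factorization in Step 2 rigorous. The global integral is an integral over $G_n(\bbA_F)$ of a vector-valued function, and one must justify that it equals a restricted tensor product of the local operators. We would first prove absolute convergence for $\mathrm{Re}(s)\gg0$. The natural route is to bound the integrand by the matrix coefficient of $\varphi$ against $|h_s|$ and invoke the convergence of $Z_v$ at each place. We would then reduce to factorizable $\varphi$, possibly a finite sum, and check the normalizations at $v\notin T$: specifically that $h_{v,s}(1_{4n})=1$ and $\vol(K_v)=1$, so that no extra constants enter beyond those in \eqref{Tamagawa_measure_comparison}. We would also verify that the negligible orbits vanish, which is standard and can be quoted from \cite{GPSR_87}.
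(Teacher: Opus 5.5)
Your proposal is correct and follows essentially the same route as the paper: the basic identity of \cite{GPSR_87}, which the paper simply cites and you sketch via unfolding, then factorization with the constant from \eqref{Tamagawa_measure_comparison}, then Theorem \ref{thm_unr_zeta_int} at $v\notin T$ together with the self-duality \eqref{selfduality_local_factor}, which the paper leaves implicit. One small slip: the unfolded integrand should be $h_s(\delta\,\iota(g,1_{2n}))\varphi(\check y g)$, obtained by absorbing $\iota(\check y,y)$ into $P_{2n}$ via the stabilizer, not $\varphi(g\check y)$ as you wrote after ``$g\mapsto g\check y$''; your Step 2, which evaluates $r(g)\varphi$ at $\check y$, already uses the correct form.
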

    \begin{proof}
        By the basic identity in \cite{GPSR_87} (see also \cite[Theorem 3.6]{21_PSS_algebraicity}), we have
        \[
        \int_{G_n(F) \bs G_n(\bbA_F)} E(\iota(x,y), s, h) \varphi(x) \, dx^\Tam
        =
        \int_{G_n(\bbA_F)} h_s(\delta^{-1}\iota(x,1_{2n})) \varphi(\check{y}x) \, dx^\Tam.
        \]
        The description of the Tamagawa measure given in \eqref{Tamagawa_measure_comparison} and the computations at unramified places discussed in Theorem \ref{thm_unr_zeta_int} imply the formula.
    \end{proof}

    Substituting our section $f_s$ into this formula, we obtain the following.
    \begin{prop}\label{prop_global_zeta_int_explicit}
    With the choices in \S \ref{subsec_choice_global_section}, we have
        \begin{align*}
        &
        \langle E(\iota(-,g), s_0'-1/2,f), \varphi\rangle\\
        &=
        C \cdot
        \frac{D_F^{-n(2n+1)/2}\pi^{dn(n+1)/2}}
{\prod_{j=1}^{n}\Gamma(j)^d\zeta_F(2j)} 
    \cdot
        \frac{L^S(s_0', \pi, \mu)}{b^S_{2n}(s_0'-1/2,\mu)}\cdot \prod_{v \in S_1}\gamma_v(1-s_0', \pi_v^\vee, \mu_v^{-1}, \psi_{v,\bdelta_v^{-1}}) 
        \cdot
        \prod_{v \in S_1 \sqcup S_2} \mu_v(\tau_v)^{n}\\
        & \quad \cdot \prod_{v \in S_1}
        \gamma_v(-s_0'-n+1, \mu_v^{-1},\psi_{v,\bdelta_v^{-1}})^{-1}\prod_{j=1}^{n}\gamma_{v}(-2s_0' - 2n + 2j+1, \mu_v^{-2}, \psi_{v,\bdelta_v^{-1}})^{-1}
        \\
        &\quad \cdot \prod_{v \mid \infty} (\sqrt{-1}\,\pi)^{-\sum_{j=1}^n \lambda_{v,j} + n(s_0'+n)}
        \pi^{n(n+1)/2} \cdot \prod_{v \in S_1 \sqcup S_2} 
        r(\diag(\tau_v \cdot 1_n, \tau_v^{-1}\cdot 1_n))
        \overline{\varphi(\check{g})}
        \end{align*}
        where the nonzero rational constant $C$ is given by
        \[
        C 
        = \prod_{v \in S_1}
        \omega_{\pi,v}(-1)\prod_{v \in S_1 \sqcup S_2} \vol(K_v(\frakp_v^{m_v})) \cdot \prod_{v \mid \infty} 
        \frac{2^{-2\sum_{j}\lambda_{v,j}-ns_0'+n^2+2n}\prod_{j=1}^n
        \Gamma(\lambda_{v,j}-j+s_0')}
        {(\Gamma_{2n}(s_0'+n)/\pi^{n^2})\dim(\GL_n, \lambda_v)}.
        \]
    \end{prop}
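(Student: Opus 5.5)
We plan to apply Theorem \ref{thm_int_rep_global_L} with $h_s = f_s$, $T = S = S_1 \sqcup S_2 \sqcup S_\infty$, and $s = s_0' - 1/2$. By Lemma \ref{lem_holomorphy_Eis_ser} the Eisenstein series is holomorphic at this point, so the identity, valid for $\mathrm{Re}(s)\gg 0$, continues meromorphically and may be evaluated there. The pairing in the statement is against $\varphi$ rather than $\overline{\varphi}$, so we apply the theorem to the cusp form $\overline{\varphi}$. This cusp form lies in the contragredient, and the self-duality \eqref{selfduality_local_factor} lets us keep writing $\pi$ in the partial $L$-function and $\pi_v^\vee$ in the $\gamma$-factors as stated. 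The output is the displayed global constant times $L^S(s_0',\pi,\mu)/b_{2n}^S(s_0'-1/2,\mu)$, applied to $\prod_{v\in S} T_v(s_0'-1/2,f_{v,s})$ acting on $\overline{\varphi}$ and evaluated at $\check g$. The remaining work is to compute each local operator $T_v$ on the relevant vector.

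At $v\in S_2$ we use Lemma \ref{lem_zeta_int_ramified_big_cell}. The local vector is fixed by $K_v(\frakp_v^{m_v})$ by the choice of $m_v$, so $T_v$ gives $\vol(K_v(\frakp_v^{m_v}))\mu_v(\tau_v)^n$ times right translation by $\diag(\tau_v 1_n,\tau_v^{-1}1_n)$. At $v\in S_1$ we use Lemma \ref{lem_ram_zeta_int}(2) at $s=s_0'-1/2$. The normalizing factor $\mu_v(\tau_v)^{2n}$ in the definition of $f_{v,s}$ combines with the $\mu_v(\tau_v)^{-n}$ from the lemma to give $\mu_v(\tau_v)^n$. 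Substituting, the Tate $\gamma$-factors $\gamma_v(-s-n+1/2,\mu_v^{-1})$ and $\gamma_v(-2s-2n+2j,\mu_v^{-2})$ become $\gamma_v(-s_0'-n+1,\mu_v^{-1})$ and $\gamma_v(-2s_0'-2n+2j+1,\mu_v^{-2})$. The doubling factor $\gamma_v(-s+1/2,\pi_v^\vee,\mu_v^{-1})$ becomes $\gamma_v(1-s_0',\pi_v^\vee,\mu_v^{-1})$, and the lemma also contributes the sign $\omega_{\pi_v}(-1)$ and $\vol(K_v(\frakp_v^{m_v}))$.

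At archimedean $v$ we use Theorem \ref{thm_arch_zeta} with $k=s_0'+n$. For this $k$ the evaluation point $k-n-1/2$ equals $s_0'-1/2$, and the hypothesis $\lambda_{v,n}\ge k>n$ is exactly criticality of $s_0'$. The vector $\varphi_\infty$ is a highest weight vector of the lowest $K$-type, so Schur's lemma and $K_\infty$-equivariance show that $T_v$ acts on it by the scalar $Z_v$ from that theorem. We then rewrite that scalar. We separate $(4\sqrt{-1}\pi)^{-\sum_j\lambda_{v,j}+nk}$ into a power of $2$ times $(\sqrt{-1}\pi)^{-\sum_j\lambda_{v,j}+n(s_0'+n)}$. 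We write $\pi^{n(3n+1)/2}=\pi^{n(n+1)/2}\pi^{n^2}$ and absorb $\pi^{n^2}$ into $\Gamma_{2n}(s_0'+n)/\pi^{n^2}$. The surviving $\Gamma(\lambda_{v,j}-j+s_0')$ comes from $\Gamma(\lambda_j-j+k-n)$. Finally we collect all powers of $2$ into the constant $C$.

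We expect the bookkeeping in this last step to be the main obstacle. The powers of $2$ and $\pi$ must match the exponent $-2\sum_j\lambda_{v,j}-ns_0'+n^2+2n$ exactly, which requires care with Liu's normalizations of measures and root vectors; we will rely on the remark after \eqref{def_diff_op} that these agree with ours. It is also nontrivial that the archimedean part of the zeta integral is really a scalar on the chosen vector. A second, milder point is justifying evaluation at $s_0'-1/2$ when $b^S_{2n}$ or the $\gamma$-factors have poles or zeros there. For this we argue that both sides are meromorphic and equal identically in $s$, and that the left side is holomorphic at $s_0'-1/2$, so the right side can be evaluated as a limit. We will also confirm that $C$ is a nonzero rational number: the volumes are rational, $\omega_{\pi_v}(-1)=\pm1$, and $\Gamma_{2n}(k)/\pi^{n^2}\in\bbQ^\times$.
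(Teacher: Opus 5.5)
Your route is the paper's own: its proof is just the substitution of Theorem \ref{thm_unr_zeta_int}, Lemmas \ref{lem_zeta_int_ramified_big_cell} and \ref{lem_ram_zeta_int}, and Theorem \ref{thm_arch_zeta} into Theorem \ref{thm_int_rep_global_L}. Your bookkeeping is right: applying the theorem to $\overline{\varphi}$ together with self-duality, the arguments $-s_0'-n+1$, $-2s_0'-2n+2j+1$ and $1-s_0'$, the factor $\mu_v(\tau_v)^{2n}\mu_v(\tau_v)^{-n}$, and the exponent $-2\sum_j\lambda_{v,j}-ns_0'+n^2+2n$ coming from $4^{-\sum_j\lambda_{v,j}+nk}\,2^{n(2n+2-3k)}$ with $k=s_0'+n$.

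The one step that does not work as you argue it is the archimedean scalar claim. Directly from the definition, for $h\in K_\infty$,
\[
T_v(s,f)\circ\pi_v(h)=T_v\bigl(s,r(\iota(h^{-1},1_{2n}))f\bigr),\qquad \pi_v(h)\circ T_v(s,f)=T_v\bigl(s,r(\iota(1_{2n},\check h))f\bigr).
\]
The second identity uses $\delta^{-1}\iota(h,\check h)\delta\in P_{2n}$, with the inducing character trivial on this image of $K_\infty$. So $T_v$ commutes with $K_\infty$ whenever $f$ is fixed by every $\iota(h,\check h)$, as $f_k$ is. However, $\iota(h,\check h)$ acts on the block $(X_{i+n,j})_{i,j}$ by unitary conjugation, and this does not fix the minors ${\det}_j$, $j<n$, in $\frakQ_{k,\lambda}$.

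What the two relations do give uses the fact that $f_{k,\lambda,s}$ is a weight vector for the maximal torus of $K_\infty\times K_\infty$ embedded by $\iota$. From this, $T_v$ annihilates every weight vector except those of one fixed weight, and its image lies in one fixed weight space. On the lowest $K_\infty$-type these are extremal weights of multiplicity one, so there $T_v$ has rank at most one. Hence it is not a scalar when $\dim\rho_{\lambda_v}>1$, and Schur's lemma cannot be used. It does act by the scalar of Theorem \ref{thm_arch_zeta} on precisely that extremal vector.

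Note also that $T_v$ acts on the archimedean component of $\overline{\varphi}$, not on $\varphi_\infty$. Your argument therefore goes through once you require, as the proposition implicitly does, that this component be the extremal vector to which Theorem \ref{thm_arch_zeta} applies. For other vectors of the $K_\infty$-type, the right-hand side would not be a multiple of $\overline{\varphi(\check g)}$. In parallel scalar weight the issue does not arise.
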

    \begin{proof}
        Substitute Theorem \ref{thm_unr_zeta_int}, Lemmas \ref{lem_zeta_int_ramified_big_cell}, \ref{lem_ram_zeta_int}, and Theorem \ref{thm_arch_zeta} into Theorem \ref{thm_int_rep_global_L}.
    \end{proof}
\subsection{Arithmeticity of critical \texorpdfstring{$L$}{}-values}

Before the proof of the main result, we investigate the restriction of Eisenstein series to the doubling embedding $G_n(\bbA_F) \times G_n(\bbA_F) \subset G_{2n}(\bbA_F)$.
    Let $\rho_\lambda$ be the irreducible representation of $\prod_{v \mid \infty} K_{v}$ with highest weight $(\lambda_v)_v$ and $K(\frakN) = \prod_{v < \infty} K_v(\frakp_v^{m_v})$ be the compact open subgroup of $G_n(\bbA_{F,\fini})$, where $\frakN = \prod_v\frakp_v^{m_v}$.
    Let $\calN_\cusp(\lambda, \frakN)$ be the subspace of $\calN(G_n)^{K(\frakN)}_{\rho_\lambda}$ consisting of cusp forms of weight $(\lambda_v)_v$ and $\calN_\Eis(\lambda, \frakN)$ be the space of automorphic forms orthogonal to $\calN_\cusp(\lambda, \frakN)$ in $\calN(G_n)^{K(\frakN)}_{\rho_\lambda}$.
    For a cuspidal automorphic representation $\xi$ of $G_n(\bbA_F)$, let $\calN_\cusp(\lambda, \frakN)[\xi]$ be the $\xi$-isotypic component of $\calN_\cusp(\lambda, \frakN)$.
    Take an orthogonal basis $\calB(\lambda, \frakN)[\xi]$ of $\calN_\cusp(\lambda, \frakN)[\xi]$ and a basis $\calB_\Eis(\lambda, \frakN)$ of $\calN_\Eis(\lambda, \frakN)$.

The following lemma plays an essential role in the study of the restriction of $E(g,s_0'-1/2,f)$.
\begin{lem}\label{lem_cpx_conj}
    With the above notation, for the complex conjugation $c$, we have $\overline{\varphi}(\check{h}) = r(w_n){^c\varphi}(h)$.
\end{lem}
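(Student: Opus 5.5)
We reduce the identity to a statement about Fourier expansions of the corresponding nearly holomorphic Hilbert--Siegel modular forms and track the effect of $g\mapsto\check g$ on $\frakH_n$. The first step is to factor the map. We have $\check g = J g J$ with
\[
J=\begin{pmatrix} 0_n & 1_n\\ 1_n & 0_n\end{pmatrix}.
\]
Writing $J = w_n\,\diag(-1_n,1_n)$ up to the sign conventions (one checks $\check g = \epsilon\, w_n^{-1} g w_n \epsilon$ with $\epsilon=\diag(1_n,-1_n)\in \GSp_{2n}$, $\epsilon^2=1$), conjugation by $\epsilon$ is the outer automorphism of $\Sp_{2n}$ of similitude $-1$. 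On $\frakH_n$ it acts by $z\mapsto -\bar z$, and on $\bbA_{F,\fini}$ it corresponds to the element $-1\in\widehat{\calO}_F^\times$. This element is exactly $\tau_c$ for complex conjugation $c$, since $\overline{\psi_F(x)}=\psi_F(-x)$.

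The second step computes $\overline{\varphi}(\epsilon h \epsilon)$ by working with $\varphi=\varphi(f\otimes x^*)$ as in \eqref{isom_NH_MF_AF}. At the archimedean places, $\epsilon g_z\epsilon = g_{-\bar z}$ (from $x\mapsto -x$, $y\mapsto y$), and $\epsilon k\epsilon=\bar k$ for $k\in K_\infty\cong \mathrm{U}(n)$. Complex conjugating the Fourier expansion
\[
f(z)=\sum_h p_h(\pi^{-1}\sqrt{-1}\,y^{-1})\, e^{2\pi\sqrt{-1}\tr(hz)}
\]
then gives
\[
\overline{f(-\bar z)}=\sum_h \overline{p_h}(-\pi^{-1}\sqrt{-1}\,y^{-1})\,e^{2\pi\sqrt{-1}\tr(hz)}.
\]
The sign $-\pi^{-1}\sqrt{-1}$ must be absorbed into the definition of ${}^c p_h$. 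Checking the homogeneity carefully (the polynomial has degree $e$ components, and $\sqrt{-1}$ appears through the variables exactly as in the definition of ${}^\sigma f$), the conjugated series should be precisely ${}^c f$ with $\sigma=c$ permuting no places, since $F$ is totally real. The automorphy factor $\rho(j(g,z))$ transforms compatibly: $\overline{\rho(\bar a)}=\rho(a)$ for the rational structure $V_\bbQ$, so $\bar k$ on the $K_\infty$-side matches ${}^c\rho$. At the finite places, Lemma \ref{lem_Gal_action_MF} shows that conjugation by $\diag(\tau_c 1_n,1_n)=\diag(-1_n,1_n)$ intertwines the Galois action with the adelic variable. Combining the two, $\overline{\varphi}(\epsilon h\epsilon)={}^c\varphi(h)$ for all $h\in G_n(\bbA_F)$. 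Strong approximation plus invariance under $G_n(F)$ lets us pass from the classical statement to the adelic one, because $\epsilon$ normalizes $G_n(F)$.

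The third step uses $\check h=\epsilon\, w_n^{-1} h w_n\,\epsilon$ and left $G_n(F)$-invariance of $\overline\varphi$. Since $w_n\in G_n(F)$ and $\epsilon w_n\epsilon = w_n^{-1}$, the rational factor $\epsilon w_n^{-1}\epsilon$ on the left can be dropped. This yields $\overline{\varphi}(\check h)={}^c\varphi(h w_n)=r(w_n){}^c\varphi(h)$; if instead $w_n^{-1}$ appears, use $w_n^{-1}=-w_n$ with $-1_{2n}$ central and acting trivially on $\varphi$.

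The main obstacle is the second step. There one must verify that the archimedean sign conventions (the factor $\pi^{-1}\sqrt{-1}$ in $p_h$, the choice of $\iota_\pm$, and the conjugation on $\rho\otimes\rho^*$) make complex conjugation of $f(-\bar z)$ agree exactly with the Galois twist ${}^cf$, rather than with it up to a sign $(-1)^e$ on the degree-$e$ part. I would first check this on $E_2$ and on $D^e_\rho$ applied to holomorphic forms, using \eqref{relation_Gal_diff_op} to reduce the nearly holomorphic case to the holomorphic one, where the statement is just $\overline{a_h}=c(a_h)$.
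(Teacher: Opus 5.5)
Your plan is the paper's proof: the paper asserts ${}^c\varphi(g)=\overline{\varphi(\epsilon g\epsilon)}$ with $\epsilon=\diag(1_n,-1_n)$ on the grounds that $c$ acts by conjugating Fourier coefficients, and the lemma then follows from the same bookkeeping you do, namely $\check h=\epsilon w_n^{-1}hw_n\epsilon$ with $\epsilon w_n^{-1}\epsilon=w_n\in G_n(F)$. The $(-1)^e$ you flag does not occur, because the expansion variable has to be real, e.g.\ $(\pi y)^{-1}$ as in Shimura: read literally, the $\sqrt{-1}$ in the displayed expansion would give ${}^cE_2=-3(\pi y)^{-1}-1+\cdots$, which is not modular, and it would break \eqref{relation_Gal_diff_op} for $\sigma=c$; with a real variable the signs are consistent with your reduction to holomorphic forms, since $\overline{\Ad(\epsilon)X^{+}_{i,j}}=-X^{+}_{i,j}$ matches $c(\pi\sqrt{-1})=-\pi\sqrt{-1}$ in Lemma \ref{lem_arith_AF}.
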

\begin{proof}
    For a Hilbert-Siegel modular form $f$, the complex conjugate $c$ acts on $f$ by the complex conjugate of its Fourier coefficients.
    In terms of automorphic forms, we have
    \[
{^c}\varphi_f
\left(\abcd\right)
=
\overline{\varphi_f
\left(
\begin{pmatrix}
    a & -b \\
    -c & d
\end{pmatrix}
\right)},
\]
where $\varphi_f$ is an automorphic form corresponding to $f$.
This yields the assertion.
\end{proof}

For $\sigma \in \Aut(\bbC)$ and a cuspidal automorphic representation $\pi$ with $\varphi \in \pi$, let ${^\sigma\pi_1}$ be the cuspidal automorphic representation generated by ${^\sigma\varphi}$.
By Theorem \ref{thm_int_rep_global_L} and Lemma \ref{lem_cpx_conj}, the restriction of $E(g, s_0'-1/2,f)$ to $G_n(\bbA_F) \times G_n(\bbA_F)$ is of the form
\begin{equation}\label{exp_eisenstein_ser}
\sum_{\xi} \sum_{\varphi \in \calB(\lambda, \frakN)[\xi]} 
\sum_{\varphi' \in \calB(\lambda, \frakN)[^c\xi_1]} 
C(\varphi,\varphi') \cdot \varphi(g) \varphi'(h) 
+
\sum_{\calE,\calE' \in \calB_\Eis(\lambda, \frakN)} C(\calE,\calE') \cdot \calE(g) \calE'(h) 
\end{equation}
for some constants $C(\varphi,\varphi')$ and $C(\calE,\calE')$.
\begin{lem}\label{lem_C(varphi)}
For $\varphi \in \calB(\lambda, \frakN)[\pi]$, we have
\[
\sum_{\varphi' \in \calB(\lambda, \frakN)[^c\pi_1]}C(\varphi, \varphi') \varphi'(h) = C(\varphi)\prod_{v \in S_1 \sqcup S_2} 
        r(\diag(\tau_v \cdot 1_n, \tau_v^{-1}\cdot 1_n))
        \overline{\varphi(\check{h})},
\]
where $C(\varphi)$ is defined by
\begin{align*}
    C(\varphi)&=C \cdot
        \frac{D_F^{-n(2n+1)/2}\pi^{dn(n+1)/2}}
{\prod_{j=1}^{n}\Gamma(j)^d\zeta_F(2j)}  
\cdot
        \frac{L^S(s_0', \pi, \mu)}{b^S_{2n}(s_0'-1/2,\mu)}\cdot \prod_{v \in S_1}\gamma_v(1-s_0', \pi_v^\vee, \mu_v^{-1}, \psi_{v, \bdelta_v^{-1}}) 
        \\
        & \quad \cdot \prod_{v \in S_1}
        \gamma_v(-s_0'-n+1, \mu_v^{-1},\psi_{v,\bdelta_v^{-1}})^{-1}\prod_{j=1}^{n}\gamma_{v}(-2s_0' - 2n + 2j+1, \mu_v^{-2}, \psi_{v,\bdelta_v^{-1}})^{-1}
        \\
        &\quad \cdot \prod_{v \in S_1 \sqcup S_2} \mu_v(\tau_v)^{n} \cdot \prod_{v \mid \infty} (\sqrt{-1}\,\pi)^{-\sum_{j=1}^n \lambda_{v,j} + n(s_0'+n)}
        \pi^{n(n+1)/2} \cdot \langle \varphi, \varphi \rangle^{-1}.
\end{align*}
\end{lem}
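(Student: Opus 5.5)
The approach is to pair the spectral expansion \eqref{exp_eisenstein_ser} against $\varphi$ in the first variable and compare the result with Proposition \ref{prop_global_zeta_int_explicit}. Fix $\varphi \in \calB(\lambda,\frakN)[\pi]$ and $h \in G_n(\bbA_F)$. We take the Petersson inner product of $g \mapsto E(\iota(g,h), s_0'-1/2, f)$ with $\varphi$, where $\langle \cdot,\cdot \rangle$ is linear in the first entry and conjugate-linear in the second. This function of $g$ lies in $\calN(G_n)^{K(\frakN)}_{\rho_\lambda}$, since $E$ is holomorphic at $s_0'-1/2$ by Lemma \ref{lem_holomorphy_Eis_ser}. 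Substituting \eqref{exp_eisenstein_ser} and using orthogonality, three kinds of terms arise:
\begin{itemize}
\item the terms with $\calE \in \calB_\Eis(\lambda,\frakN)$ vanish, since $\calN_\Eis$ is orthogonal to cusp forms;
\item the terms with $\xi \neq \pi$ vanish, since distinct isotypic components are orthogonal;
\item among $\varphi_1 \in \calB(\lambda,\frakN)[\pi]$, only $\varphi_1 = \varphi$ survives, because the basis is orthogonal.
\end{itemize}
We therefore obtain
\[
\langle E(\iota(-,h),s_0'-1/2,f),\varphi\rangle = \langle\varphi,\varphi\rangle \sum_{\varphi'\in\calB(\lambda,\frakN)[^c\pi_1]} C(\varphi,\varphi')\varphi'(h).
\]

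Next, evaluate the left-hand side by Proposition \ref{prop_global_zeta_int_explicit} with $g$ replaced by $h$. It equals the explicit product of constants, $L$-values and $\gamma$-factors times $\prod_{v\in S_1\sqcup S_2} r(\diag(\tau_v 1_n,\tau_v^{-1}1_n))\overline{\varphi(\check h)}$. Dividing by $\langle\varphi,\varphi\rangle$ gives exactly $C(\varphi)$ as written, multiplied by this translated function, which proves the identity.

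One consistency check is needed to make the comparison legitimate. The function $h \mapsto \prod_v r(\cdots)\overline{\varphi(\check h)}$ must actually lie in the span of $\calB(\lambda,\frakN)[^c\pi_1]$; this is needed to interpret the expansion \eqref{exp_eisenstein_ser} correctly. By Lemma \ref{lem_cpx_conj}, $\overline{\varphi}(\check h) = r(w_n){^c\varphi}(h)$, so the function lies in ${^c\pi_1}$. It is $K(\frakN)$-fixed because $K_v(\frakp_v^{m_v})$ is normalized by $\diag(\tau_v,\tau_v^{-1})$. It has the right $K_\infty$-type because $r(w_n)$ preserves the lowest $K_\infty$-type of the antiholomorphic conjugate.

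The main obstacle I expect is the convention bookkeeping in Proposition \ref{prop_global_zeta_int_explicit}. That proposition is stated with $\varphi$ in the second slot and $\overline{\varphi(\check g)}$ on the right, whereas Theorem \ref{thm_int_rep_global_L} pairs against $\overline{\varphi}$. One must check that the conjugate-linearity and the $\check{}$-involution produce no extra scalar, such as a central character value or a factor from $r(w_n)$, beyond what is already absorbed into $C(\varphi)$. I would handle this by carrying the basic identity of Theorem \ref{thm_int_rep_global_L} through directly with $\varphi$ in place of $\overline{\varphi}$, and confirming that $T_v$ at each $v\in S$ acts by the scalars computed in Lemmas \ref{lem_zeta_int_ramified_big_cell} and \ref{lem_ram_zeta_int} and Theorem \ref{thm_arch_zeta}, followed by translation by $\diag(\tau_v 1_n,\tau_v^{-1}1_n)$.
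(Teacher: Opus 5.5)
Your proposal is correct and matches the paper's argument: pairing the expansion \eqref{exp_eisenstein_ser} with $\varphi$ in the first variable gives $\langle\varphi,\varphi\rangle\sum_{\varphi'}C(\varphi,\varphi')\varphi'(h)$, and comparing this with Proposition \ref{prop_global_zeta_int_explicit} and dividing by $\langle\varphi,\varphi\rangle$ yields $C(\varphi)$. The convention bookkeeping you flag as the main obstacle is already handled inside Proposition \ref{prop_global_zeta_int_explicit}, so the lemma itself needs no further check.
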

\begin{proof}   
    The left hand side of Proposition \ref{prop_global_zeta_int_explicit} is equal to $\langle\varphi, \varphi \rangle \sum_{\varphi' \in \calB(\lambda, \frakN)[^c\pi_1]}C(\varphi, \varphi') \varphi'$, which implies the statement.
\end{proof}

We now prove the main theorem.
Fix a rational structure on the $K_\infty$-type $\lambda = (\lambda_v)_v$.
\begin{thm}\label{thm_main}
    Under the assumptions of \S \ref{subsec_choice_global_section}, let $\varphi$ be a cusp form in $\pi$ corresponding to a Hilbert-Siegel cusp form whose Fourier coefficients lie in a CM field.
    For any $\sigma \in \Aut(\bbC)$, we denote by $\pi_{{^\sigma \varphi}}$ the cuspidal automorphic representation generated by ${^\sigma \varphi}$.
    We then have
    \begin{align*}
    &\sigma\left(
    \frac{L^S(s_0', \pi, \mu)}{D_F^{(n+1)/2}G(\mu)^{n+1}(2\pi \sqrt{-1})^{\sum_{v \mid \infty}{\sum_{j=1}^n \lambda_{v,j} + d(n+1)s_0'}}\langle\varphi, \varphi\rangle}
    \prod_{v \in S_1} \frac{\gamma_v(1-s_0', \pi^\vee_v, \mu_v^{-1}, \psi_{v, \bdelta_v^{-1}})}{\vep_v(0, \mu_v^{-1}, \psi_{v,\bdelta_v^{-1}})^{2n+1}}\right)\\
    &=
    \frac{L^S(s_0', \pi_{{^\sigma \varphi}}, {^\sigma \mu})}{D_F^{(n+1)/2}G({^\sigma \mu})^{n+1}(2\pi \sqrt{-1})^{\sum_{v \mid \infty}{\sum_{j=1}^n \lambda_{v,j} + d(n+1)s_0'}}\langle{^\sigma \varphi}, {^\sigma \varphi}\rangle}
    \prod_{v \in S_1} \frac{\gamma_v(1-s_0', \pi_{{^\sigma \varphi}, v}^\vee, {^\sigma}\mu_v^{-1}, \psi_{v, \bdelta_v^{-1}})}
    {\vep_v(0, {^\sigma\mu_v^{-1}}, \psi_{v,\bdelta_v^{-1}})^{2n+1}}.
\end{align*}
\end{thm}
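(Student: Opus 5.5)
The plan is to compare the Galois action on the two sides of the doubling expansion \eqref{exp_eisenstein_ser}. By Proposition \ref{prop_rationality_Eis_ser}, applying $\sigma$ to $E(g,s_0'-1/2,f,\mu,(\lambda_v)_v,(\tau_v)_v)$ gives the Eisenstein series with data $({^\sigma\mu},(\lambda_{\sigma\circ v})_v,(\tau_v\tau_{\sigma,v}^{-1})_v)$. The Galois action on $\calN(G_{2n})$ is compatible with restriction to $G_n\times G_n$: this is immediate on Fourier expansions, since the pullback of a nearly holomorphic form along $\frakH_n\times\frakH_n\to\frakH_{2n}$ has coefficients given by finite sums of the original coefficients. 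The action also permutes cuspidal isotypic components, sending $\xi$ to ${^\sigma\xi_1}$, and it preserves the orthogonal decomposition into cuspidal and Eisenstein parts. Hence $\sigma$ carries the $\pi\otimes{^c\pi_1}$-component of the restricted series to the $\pi_{{^\sigma\varphi}}\otimes{^c(\pi_{{^\sigma\varphi}})_1}$-component of the twisted series. Here I use that $c$ and $\sigma$ commute on forms with coefficients in a CM field, via Lemma \ref{lem_cpx_conj}.

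The key step is to project onto the $\varphi$-line. By Lemma \ref{lem_C(varphi)}, the coefficient of $\varphi(g)$ in this component is
\[
C(\varphi)\prod_{v\in S_1\sqcup S_2} r(\diag(\tau_v\cdot 1_n,\tau_v^{-1}\cdot 1_n))\overline{\varphi(\check h)}.
\]
By Lemma \ref{lem_cpx_conj} this equals $r(w_n)\,{^c\varphi}$ translated by the torus elements. I apply $\sigma$ to this identity and use Lemma \ref{lem_Gal_action_MF} together with the Corollary following it. These turn the translate by $\diag(\tau_v,\tau_v^{-1})$ into the translate by $\diag(\tau_v\tau_{\sigma,v}^{-1},\ldots)$, which matches the new $\tau$-data, and they turn $r(w_n){^c\varphi}$ into $r(w_n){^c({^\sigma\varphi})}$. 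Since ${^\sigma\varphi}$ may be taken as a member of the basis $\calB(\lambda,\frakN)[\pi_{{^\sigma\varphi}}]$, comparing coefficients yields
\[
\sigma(C(\varphi))=C({^\sigma\varphi}),
\]
where the right side is computed with the twisted data. This uses the independence of the expansion from the choice of orthogonal basis. It also requires showing that $\sigma$ maps the orthogonal complement of $\varphi$ in its isotypic component to that of ${^\sigma\varphi}$, which I would deduce from Hecke equivariance and the multiplicity-one coefficient extraction in Lemma \ref{lem_C(varphi)}.

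It remains to unwind $C(\varphi)$ explicitly, using the formula in Lemma \ref{lem_C(varphi)}, and to absorb the explicit factors one at a time.
\begin{enumerate}
\item The constant $C$ and the $\Gamma$-factors are rational, and the powers of $\pi$ and $\sqrt{-1}$ combine with $\Gamma_{2n}\in\pi^{n^2}\bbQ^\times$.
\item The product $\prod_j\zeta_F(2j)$ and $b^S_{2n}(s_0'-1/2,\mu)$ are products of critical Hecke $L$-values. Lemma \ref{lem_arith_Gauss_sum_L-val}(3) handles these, producing the factors $D_F^{1/2}G(\mu)$ and powers of $2\pi\sqrt{-1}$. Note that $G(\mu^2)/G(\mu)^2$ is equivariant by Lemma \ref{lem_arith_Gauss_sum_L-val}(2).
\item The Tate $\gamma$-factors at $v\in S_1$, normalised by $\vep_v(0,\cdot)$, are Galois-equivariant, as in the proof of Lemma \ref{lem_arith_Gauss_sum_L-val}(5).
\item The remaining $\vep_v(0,\mu_v^{-1})$ and $\vep_v(0,\mu_v^{-2})^n$ factors assemble into $\vep_v(0,\mu_v^{-1})^{2n+1}$, again up to Gauss-sum ratios covered by Lemma \ref{lem_arith_Gauss_sum_L-val}(1),(2).
\item The factors $\mu_v(\tau_v)^n$ compensate the shift $\tau_v\mapsto\tau_v\tau_{\sigma,v}^{-1}$.
\end{enumerate}

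The main obstacle I expect is bookkeeping. One must track the exact powers of $G(\mu)$, $D_F$ and $2\pi\sqrt{-1}$ so that they combine into $D_F^{(n+1)/2}G(\mu)^{n+1}(2\pi\sqrt{-1})^{\sum\lambda_{v,j}+d(n+1)s_0'}$. One must also check that the characters $\mu_v(\tau_{\sigma,v})$ arising from Lemma \ref{lem_arith_Gauss_sum_L-val}(1) cancel globally against the $\tau$-twists. The factor $\prod_{v\in S_1}\gamma_v(1-s_0',\pi_v^\vee,\ldots)$ is left untouched and simply appears in the final identity.
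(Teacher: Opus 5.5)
Your overall architecture is the paper's: Proposition~\ref{prop_rationality_Eis_ser}, then the expansion \eqref{exp_eisenstein_ser}, then the diagonal shape from Lemma~\ref{lem_C(varphi)}, then $\sigma(C(\varphi))=C({^\sigma\varphi})$, then bookkeeping with Lemma~\ref{lem_arith_Gauss_sum_L-val}. Your bookkeeping includes the cancellation of $\prod_v\mu_v(\tau_v)^n$ against ${^\sigma\mu}(\tau_\sigma)^{n}$, which turns $G(\mu)^{2n+1}$ into $G(\mu)^{n+1}$.

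\textbf{The gap: the coefficient extraction as you formulate it.} You compare ``the $\pi\otimes{^c\pi_1}$-component'' on both sides and then ``the coefficient of ${^\sigma\varphi}$''. For that you need two facts:
\begin{enumerate}
\item[(i)] $\sigma$ maps $\calN_\Eis(\lambda,\frakN)$ onto the Eisenstein part on the ${^\sigma\lambda}$ side;
\item[(ii)] $\sigma$ maps $\varphi^\perp\cap\calN_\cusp(\lambda,\frakN)[\pi]$ onto the orthogonal complement of ${^\sigma\varphi}$ in its isotypic component.
\end{enumerate}
Without (ii), the coefficient of ${^\sigma\varphi}$ in the basis $\{{^\sigma\varphi_{\pi,\ell}}\}_\ell$ and in an orthogonal basis on the twisted side are not comparable. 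Neither fact follows from Hecke equivariance. The Hecke algebra away from the level acts by the same scalars on the whole, typically multi-dimensional, isotypic component, so it cannot detect orthogonality inside it. Moreover, $\sigma$ is only a semilinear bijection, not an isometry for $\langle\,,\,\rangle$. Statement (i) is a nontrivial Galois-stability theorem for the Eisenstein complement that is not available here. Statement (ii) is a nontrivial statement about Galois-equivariance of ratios of Petersson products.

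\textbf{The fix: neither fact is needed.} As in the paper, pair both sides with ${^\sigma\varphi}$ in the $g$-variable before comparing anything.
\begin{enumerate}
\item[(a)] On the twisted side, orthogonality of that side's own basis (chosen with first vector ${^\sigma\varphi}$) leaves only $\langle{^\sigma\varphi},{^\sigma\varphi}\rangle\, C({^\sigma\varphi})\, r(\cdot)\,{^{c\circ\sigma}}\varphi(h)$.
\item[(b)] On the conjugated side you get a combination, in the $h$-variable, of the $\sigma$-images of a basis. The coefficients $\langle{^\sigma\calE},{^\sigma\varphi}\rangle$ and $\langle{^\sigma\varphi_{\xi,\ell}},{^\sigma\varphi}\rangle$ are unknown.
\item[(c)] The $\sigma$-image of a basis is again a basis. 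The right side lies in the $\sigma$-image of the ${^c\pi_1}$-block, by Lemma~\ref{lem_Gal_action_MF} and the CM hypothesis ${^{c\circ\sigma}}\varphi={^{\sigma\circ c}}\varphi$. So every other block, including the Eisenstein one, drops out.
\item[(d)] By the diagonal shape in Lemma~\ref{lem_C(varphi)}, the surviving block is
\[
\sum_\ell\langle{^\sigma\varphi_{\pi,\ell}},{^\sigma\varphi}\rangle\,\sigma(C(\varphi_{\pi,\ell}))\,r(\cdot)\,{^{\sigma\circ c}}\varphi_{\pi,\ell}(h).
\]
\item[(e)] Read off the coefficient of the vector $r(\cdot)\,{^{\sigma\circ c}}\varphi$, which is linearly independent of the other terms. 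This gives $\sigma(C(\varphi))=C({^\sigma\varphi})$.
\end{enumerate}
You never need to know whether the cross terms $\langle{^\sigma\varphi_{\pi,\ell}},{^\sigma\varphi}\rangle$ for $\ell\geq 2$ vanish. With this replacement, the rest of your plan goes through as written.
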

\begin{proof}
    Let $\calB(\lambda,\frakN)[\xi] = \{\varphi_{\xi,1},\ldots,\varphi_{\xi, \ell_\xi}\}, \calB(^\sigma\lambda,\frakN)[\xi'] = \{\varphi'_{\xi',1},\ldots,\varphi'_{\xi', \ell'_\xi}\}$ be orthogonal bases so that $\varphi_{\pi, 1} = \varphi$ and $\varphi_{^\sigma\pi_1,1} = {^\sigma}\varphi$.
    Applying the description \eqref{exp_eisenstein_ser} to ${^\sigma}E(g,s_0'-1/2,f, \mu, (\lambda_v)_v, (1)_v) = E(g,s_0'-1/2,f, {^\sigma}\mu, (\lambda_
    {\sigma \circ v})_v, (\tau_{\sigma,v}^{-1})_v)$ proved in Proposition \ref{prop_rationality_Eis_ser} with $\tau_v=1$, we have
\begin{align*}
&\sum_{\xi} \sum_{\varphi \in \calB(\lambda, \frakN)[\xi]} 
\sum_{\varphi' \in \calB(\lambda, \frakN)[^c\xi_1]} 
\sigma(C(\varphi,\varphi')) \cdot {^\sigma\varphi}(g) {^\sigma}\varphi'(h) 
+
\sum_{\calE,\calE' \in \calB_\Eis(\lambda, \frakN)} \sigma(C(\calE,\calE')) \cdot {^\sigma}\calE(g) {^\sigma}\calE'(h)\\
&=
\sum_{\xi'} \sum_{\varphi_1 \in \calB(^\sigma\lambda, \frakN)[\xi']} 
\sum_{\varphi'_1 \in \calB(^\sigma\lambda, \frakN)[^c\xi_1']} 
C(\varphi_1,\varphi'_1) \cdot \varphi_1(g) \varphi'_1(h) 
+
\sum_{\calE_1,\calE'_1 \in \calB_\Eis(^\sigma\lambda, \frakN)} C(\calE_1,\calE'_1) \cdot \calE_1(g) \calE'_1(h)
\end{align*}
Take the Petersson inner product with ${^\sigma}\varphi$ in the variable $g$.
By the orthogonality of the basis and Lemmas \ref{lem_cpx_conj} and \ref{lem_C(varphi)}, we obtain
\begin{align}\label{formula_exp_eis_ser_in_h}
&\sum_{\xi} \sum_{\ell = 1}^{\ell_\xi} 
\sum_{m=1}^{\ell_{{^c}\xi_1}} 
\sigma(C(\varphi_{\xi,\ell},\varphi_{^c\xi_1,m})) \langle{^\sigma}\varphi_{\xi, \ell}, {^\sigma}\varphi\rangle
{^\sigma}\varphi_{^{c}\xi_1,m}(h) 
+
\sum_{\calE,\calE' \in \calB_\Eis(\lambda, \frakN)} \sigma(C(\calE,\calE'))  \langle{^\sigma}\calE, {^\sigma}\varphi\rangle {^\sigma}\calE'(h)\\
&=
\langle {^\sigma}\varphi,{^\sigma}\varphi \rangle
C(^\sigma\varphi)
\cdot
r
\left(
\begin{pmatrix}
    0 & -\tau_{\sigma,v} \cdot 1_n\\
    \tau_{\sigma,v}^{-1} \cdot 1_n & 0
\end{pmatrix}_v
\right)
{^{c \circ \sigma}}\varphi(h).\notag
\end{align}
Since the Fourier coefficients of the modular form corresponding to $\varphi$ lie in a CM field, one has ${^{c \circ \sigma}\varphi} = {^{\sigma \circ c}\varphi}$.
For automorphic forms $F$ and $G$, we have $\overline{\langle {^c F}, {^c G}\rangle} = \langle F,G \rangle$.
Hence, all automorphic forms in ${^c}\calB_{\Eis}(\lambda, \frakN)$ are orthogonal to all cusp forms, and the union $\bigcup_\xi\calB(\lambda, \frakN)[\xi] \cup {^c}\calB_{\Eis}(\lambda, \frakN)$ is linearly independent.
Every right translation of an element in $\calB_{\Eis}(\lambda, \frakN)$ is orthogonal to cusp forms.
By the linear independence, all terms on the left hand side of \eqref{formula_exp_eis_ser_in_h} vanish except for $\xi = \pi$.
Applying Lemma \ref{lem_C(varphi)} to the left hand side, we obtain
\begin{align*}
&\sum_{\ell = 1}^{\ell_\pi} 
\langle{^\sigma}\varphi_{\pi, \ell}, {^\sigma}\varphi\rangle 
\sigma(C(\varphi_{\pi,\ell}))
r
\left(
\begin{pmatrix}
    0_n & -\tau_{\sigma,v} \cdot 1_n\\
    \tau_{\sigma_v} \cdot 1_n& 0_n
\end{pmatrix}_v\right)
{^{\sigma \circ c}}\varphi_{\pi,\ell}(h)\\
&\qquad=
\langle {^\sigma}\varphi,{^\sigma}\varphi \rangle
C(^\sigma\varphi)
\cdot
r
\left(
\begin{pmatrix}
    0 & -\tau_{\sigma,v} \cdot 1_n\\
    \tau_{\sigma,v}^{-1} \cdot 1_n & 0
\end{pmatrix}_v
\right)
{^{\sigma \circ c}}\varphi(h).
\end{align*}
Since $\varphi_{\pi,1} = \varphi$, the linear independence again gives
\[
\sigma(C(\varphi)) = C(^\sigma \varphi).
 \]
This equality yields the desired equality by Lemma \ref{lem_arith_Gauss_sum_L-val}.
\end{proof}

The following corollary gives arithmeticity in the right half plane.
This result is essentially contained in \cite{2020_Liu} and \cite[Theorem 5.13]{22_Shih-yu_Chen}.

\begin{cor}\label{cor_S_1_empty}
    Let $\pi$ be a cuspidal automorphic representation of $G_n(\bbA_F)$ and $\mu$ be a Hecke character of $F^\times F_{\infty, +}^\times \bs \bbA_F^\times$.
    Let $S$ be a finite set of places containing the ramified places and the archimedean places.
    Suppose that for all archimedean places $v$, $\pi_v$ is a holomorphic discrete series representation of weight $\lambda_v = (\lambda_{v,1},\ldots,\lambda_{v,n})$, and $L(s,\pi,\mu)$ has a critical point $m>0$.
    Take a cusp form $\varphi$ in $\pi$ corresponding to a Hilbert-Siegel cusp form whose Fourier coefficients lie in a CM field.
    For any $\sigma \in \Aut(\bbC)$, we denote by $\pi_{{^\sigma \varphi}}$ the cuspidal automorphic representation generated by ${^\sigma \varphi}$.
    We then have
        \begin{align*}
    &\sigma\left(
    \frac{L^S(m, \pi, \mu)}{D_F^{(n+1)/2}G(\mu)^{n+1}(2\pi \sqrt{-1})^{\sum_{v \mid \infty}{\sum_{j=1}^n \lambda_{v,j} + d(n+1)m}}\langle\varphi, \varphi\rangle}\right)
    \\
    &\qquad
    =
    \frac{L^S(m, \pi_{{^\sigma \varphi}}, {^\sigma \mu})}{D_F^{(n+1)/2}G({^\sigma \mu})^{n+1}(2\pi \sqrt{-1})^{\sum_{v \mid \infty}{\sum_{j=1}^n \lambda_{v,j} + d(n+1)m}}\langle{^\sigma \varphi}, {^\sigma \varphi}\rangle},
\end{align*}
    except for the case where $m=1, F =\bbQ$ and $\mu^2 =\mathbf{1}$.
\end{cor}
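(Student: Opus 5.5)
The plan is to specialize Theorem \ref{thm_main} to $S_1 = \emptyset$, taking $s_0 = m$ and $S_2 = S \setminus S_\infty$. Since $m>0$, we have $s_0' = s_0 = m$. The set $S_2$ contains all ramified nonarchimedean places, so the data of \S\ref{subsec_choice_global_section} can be chosen with suitable $m_v$ for $v \in S_2$: take $m_v$ large enough that $\pi_v^{K_v(\frakp_v^{m_v})} \neq 0$ and $\mu_v|_{1+\frakp_v^{m_v}} = \mathbf{1}$. These exist because $\pi_v$ is smooth and $\mu_v$ is continuous. The products over $S_1$ in Theorem \ref{thm_main} are then empty, and the identity there is literally the asserted one with $s_0' = m$.

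It remains to check that the standing assumption of \S\ref{subsec_choice_global_section} holds. That assumption requires $S_1 \neq \emptyset$ whenever $s_0 \in \{0,1\}$, $F = \bbQ$ and $\mu^2 = \mathbf{1}$. Since $m>0$, the case $s_0 = 0$ cannot occur. The case $s_0 = 1$ with $F = \bbQ$ and $\mu^2 = \mathbf{1}$ is exactly the case excluded in the statement. So with $S_1 = \emptyset$ the assumption is satisfied, and Lemma \ref{lem_holomorphy_Eis_ser} and the subsequent results apply.

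One small point is that the unramified places in Theorem \ref{thm_main} are those outside $S_1 \sqcup S_2 \sqcup S_\infty = S$, so the partial $L$-function there is $L^S$, matching the statement. The statement allows $S$ to be any finite set containing the ramified and archimedean places. Places of $S_2$ at which everything happens to be unramified are harmless, since the construction explicitly allows unramified $\pi_v, \mu_v$ in $S_2$.

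I expect no real obstacle here. The only care needed is that the Galois-conjugate side uses $\pi_{^\sigma\varphi}$ with the same set $S$. Its ramified places are contained in those of $\pi$, by the Corollary describing the local components of $\pi_{^\sigma f}$ via ${^\sigma\pi_v}$ composed with $\Ad(\diag(\tau_{\sigma,v}1_n,1_n))$. The $L^S$ on the right-hand side is therefore the one produced by Theorem \ref{thm_main}.
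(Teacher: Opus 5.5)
Your proposal is correct and matches the paper's proof, which simply specializes Theorem~\ref{thm_main} to $s_0 = m > 0$ (so $s_0' = m$) and $S_1 = \emptyset$, so that the products over $S_1$ are empty. Your extra checks are the details the paper leaves implicit: taking $S_2 = S \setminus S_\infty$ with suitable $m_v$, and noting that the standing assumption $S_1 \neq \emptyset$ fails only in the excluded case $m=1$, $F=\bbQ$, $\mu^2=\mathbf{1}$.
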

\begin{proof}
    This is exactly the case of Theorem \ref{thm_main} with $s_0 = m > 0$ and $S_1 = \emptyset$.
\end{proof}

To establish the arithmeticity in the left half plane, we recall the functional equation of $L(s,\pi,\mu)$.

\begin{thm}[{\cite[Theorem 4]{2005_Lapid-Rallis}}]\label{thm_FE_std_L}
    With the notation in \S \ref{subsec_choice_global_section}, let $S'$ be a finite set of places containing all the archimedean places and the places where either $\pi, \mu$, or $\psi$ is ramified.
    Then,
    \[
    L^{S'}(s, \pi,\mu) = \prod_{v \in S'} \gamma_v(s, \pi_v, \mu_v, \psi_v) \cdot L^{S'}(1-s, \pi^\vee, \mu^{-1}).
    \]
\end{thm}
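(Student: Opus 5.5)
The plan is to prove the functional equation in the Lapid–Rallis way, by combining three ingredients: the global doubling integral of Theorem \ref{thm_int_rep_global_L}, the functional equation of the Siegel Eisenstein series on $G_{2n}$, and the local functional equations of Theorem \ref{th_functional_eq_zeta_int}.

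\textbf{Step 1: global integral at $s$.} Fix a factorizable holomorphic section $f_s=\otimes_v f_{v,s}$ of $I_{2n}(s,\mu)$, unramified with $f_{v,s}(1_{4n})=1$ for $v\notin S'$. Fix a cusp form $\varphi\in\pi$ and choose the local data at $v \in S'$ so that all local zeta integrals $Z_v$ are not identically zero; this is possible by the nonvanishing of doubling integrals. Theorem \ref{thm_int_rep_global_L} with $T=S'$ then gives
\[
\langle E(\iota(-,y),s,f),\overline{\varphi}\rangle=c\cdot \frac{L^{S'}(s+1/2,\pi,\mu)}{b^{S'}_{2n}(s,\mu)}\Bigl(\prod_{v\in S'}T_v(s,f_{v,s})\varphi\Bigr)(\check y),
\]
where $c$ is the explicit measure constant.

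\textbf{Step 2: global integral at $-s$.} Use the functional equation $E(g,s,f)=E(g,-s,M_{w_{2n},s}f)$. Here $M_{w_{2n},s}f=\otimes_v M_{v,w_{2n},s}f_{v,s}$, and for $v\notin S'$ the Gindikin–Karpelevich formula gives $M_{v,w_{2n},s}f^0_{v,s}=\frac{a_{v,2n}(s,\mu_v)}{b_{v,2n}(s,\mu_v)}f^0_{v,-s}$. Applying Theorem \ref{thm_int_rep_global_L} again, now to $I_{2n}(-s,\mu^{-1})$, gives
\[
\frac{a^{S'}_{2n}(s,\mu)}{b^{S'}_{2n}(s,\mu)}\cdot\frac{L^{S'}(1/2-s,\pi,\mu^{-1})}{b^{S'}_{2n}(-s,\mu^{-1})}\prod_{v\in S'}T_v(-s,M_{v,w_{2n},s}f_{v,s}).
\]

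\textbf{Step 3: local functional equations.} At each $v\in S'$, write $M_{v,w_{2n},s}=C(s,\mu_v,\psi_v)M^\dagger_{v,w_{2n},s}$. To avoid the product of $\mu_v(\bdelta_v)$'s, I would use $\psi_v$ itself rather than $\psi_{v,\bdelta_v^{-1}}$, using the transformation rules for $\gamma$ and $c$ under $\psi_v\mapsto\psi_{v,\lambda}$. Theorem \ref{th_functional_eq_zeta_int} then converts $Z_v(-s,M^\dagger f_{v,s})$ into
\[
\omega_{\pi_v}(-1)\,\gamma_v(s+1/2,\pi_v^\vee,\mu_v,\psi_v)\,Z_v(s,f_{v,s}).
\]
Comparing Steps 1 and 2 and cancelling the common nonzero factor $\prod_{v \in S'} Z_v(s,f_{v,s})$ leaves an identity of the form
\[
L^{S'}(s+\tfrac12,\pi,\mu)=\Bigl[\frac{a^{S'}(s,\mu)}{b^{S'}(-s,\mu^{-1})}\prod_{v\in S'}C(s,\mu_v,\psi_v)\omega_{\pi_v}(-1)\Bigr]\prod_{v\in S'}\gamma_v(s+\tfrac12,\pi_v^\vee,\mu_v,\psi_v)\,L^{S'}(\tfrac12-s,\pi,\mu^{-1}).
\]

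\textbf{Step 4: the bracket equals $1$.} Since $C(s,\mu_v,\psi_v)$ is a product of Tate $\gamma$-factors, the bracket reduces to products of the form $\prod_{v\in S'}\gamma_v(\cdot,\chi_v,\psi_v)^{-1}$ times partial $L$-quotients $L^{S'}(1-\cdot,\chi^{-1})/L^{S'}(\cdot,\chi)$, for $\chi\in\{\mu,\mu^2\}$. Tate's global functional equation, in the form $L^{S'}(t,\chi)=\prod_{v\in S'}\gamma_v(t,\chi_v,\psi_v)L^{S'}(1-t,\chi^{-1})$ with $\psi=\psi_F$ trivial on $F$, kills all of these. Finally $\prod_v\omega_{\pi_v}(-1)=\omega_\pi(-1)=1$, because $-1\in G_n(F)$ is central. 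Replacing $s+1/2$ by $s$ and using self-duality \eqref{selfduality_local_factor} to interchange $\pi$ and $\pi^\vee$ gives the stated formula.

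I expect the main obstacle to be the normalization bookkeeping in Steps 3 and 4: matching $a/b$ against the $\gamma$-factors in $C(s,\mu_v,\psi_v)$, including the shift conventions $s-n+1/2$ and $2s-2n+2j$ and possible stray $\mu_v(-1)$ or $\mu_v(4)$ factors. I would first check the whole identity in the case $S'=S_\infty$, and then verify that the leftover global products are $\mu(-1)=1$ and $\mu(4)$-type products that equal $1$ by the product formula.
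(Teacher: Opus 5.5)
Your argument is correct and is essentially the Lapid--Rallis proof that the paper cites in place of its own: the basic identity at $s$, then at $-s$ via the Eisenstein functional equation and Gindikin--Karpelevich, the local functional equation at each $v\in S'$, Tate's global functional equation to absorb $a^{S'}_{2n}(s,\mu)/b^{S'}_{2n}(-s,\mu^{-1})\cdot\prod_{v\in S'}C(s,\mu_v,\psi_v)$, and $\omega_\pi(-1)=1$.

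One bookkeeping correction, relevant only when $F\neq\bbQ$: with the local measures $\vol(N_{2n}(\calO_v))=1$, the Eisenstein functional equation reads $E(g,s,f)=D_F^{-n(2n+1)/2}E(g,-s,\otimes_v M_{v,w_{2n},s}f_{v,s})$, because it requires the Tamagawa measure on $N_{2n}(\bbA_F)$. Also, the $\psi$-transformation rules give $C(s,\mu_v,\psi_v)=|\bdelta_v|_v^{-n(2n+1)/2}\,\gamma_v(s-n+1/2,\mu_v,\psi_v)^{-1}\prod_{j=1}^n\gamma_v(2s-2n+2j,\mu_v^2,\psi_v)^{-1}$, not the bare Tate product. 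So Steps 2 and 4 are off by $D_F^{-n(2n+1)/2}$ and $D_F^{n(2n+1)/2}$ respectively; these cancel, and your conclusion stands.
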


Let $S'$ be a finite set of places containing $S$ such that $\pi, \mu$, and $\psi$ are unramified for any $v \not\in S'$.
We may rewrite the functional equation in Theorem \ref{thm_FE_std_L} in the following convenient form
\[
L^{S}(s, \pi^\vee,\mu^{-1}) = \prod_{v \in S} \gamma_v(s, \pi_v^\vee, \mu_v^{-1}, \psi_{v,\bdelta_v^{-1}}) 
\cdot
\prod_{v \in S' \setminus S_\infty} 
(\mu_v(\bdelta_v)|\bdelta_v|_v^{1/2-s})^{-(2n+1)}
\cdot L^{S}(1-s, \pi, \mu)
\]
where $\bdelta_v = 1$ for archimedean places $v$.

\begin{cor}\label{main_thm_s_0_negative}
    With the notation of Corollary \ref{cor_S_1_empty}, if $m$ is nonpositive, we have
    \begin{align*}
    &\sigma\left(\frac{L^S(m, \pi, \mu)}{D_F^{n/2}G(\mu)^n(2\pi\sqrt{-1})^{\sum_{v \mid \infty}\sum_{j=1}^n\lambda_{v,j} + dnm}\langle\varphi, \varphi\rangle}\right)\\
    &\qquad =
    \frac{L^S(m, \pi_{{^\sigma \varphi}}, {^\sigma \mu})}{D_F^{n/2}G(^\sigma\mu)^n(2\pi\sqrt{-1})^{\sum_{v \mid \infty}\sum_{j=1}^n\lambda_{v,j} + dnm}\langle{^\sigma \varphi}, {^\sigma \varphi}\rangle}
    \end{align*}
    except for the case where $m=0,F=\bbQ, S = S_\infty$ and $\mu = \mathbf{1}$.
\end{cor}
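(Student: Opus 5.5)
We will derive Corollary \ref{main_thm_s_0_negative} from Theorem \ref{thm_main}, applied at the reflected point, together with the functional equation of Theorem \ref{thm_FE_std_L}.

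\emph{Setup.} Let $m \le 0$ be critical and put $s_0 = m$, so that $s_0' = 1-m \ge 1$. We take $S_1 = S \setminus S_\infty$ and $S_2 = \emptyset$. If $S_1 = \emptyset$, then $\pi$ and $\mu$ are unramified at every finite place. For $F = \bbQ$ this forces $\mu = \mathbf{1}$, since $\mu$ has parallel signature and $\bbQ$ has class number one. Hence the assumption ``$S_1 \neq \emptyset$ if $s_0 \in \{0,1\}$, $F = \bbQ$, $\mu^2 = \mathbf{1}$'' of \S\ref{subsec_choice_global_section} holds precisely outside the excluded case.

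\emph{Applying Theorem \ref{thm_main}.} With this choice, Theorem \ref{thm_main} shows that the quantity
\[
\frac{L^S(1-m,\pi,\mu)}{D_F^{(n+1)/2}G(\mu)^{n+1}(2\pi\sqrt{-1})^{\sum_v\sum_j\lambda_{v,j}+d(n+1)(1-m)}\langle\varphi,\varphi\rangle}
\prod_{v\in S_1}\frac{\gamma_v(m,\pi_v^\vee,\mu_v^{-1},\psi_{v,\bdelta_v^{-1}})}{\vep_v(0,\mu_v^{-1},\psi_{v,\bdelta_v^{-1}})^{2n+1}}
\]
is $\Aut(\bbC)$-equivariant.

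\emph{Applying the functional equation.} We rewrite the functional equation as displayed before the corollary, with $s = m$ and $\pi^\vee, \mu^{-1}$. By the selfduality \eqref{selfduality_local_factor}, this expresses $L^S(m,\pi^\vee,\mu^{-1}) = L^S(m,\pi,\mu^{-1})$ as $\prod_{v\in S}\gamma_v(m,\pi_v^\vee,\mu_v^{-1},\psi_{v,\bdelta_v^{-1}})$ times the $\bdelta$-factors times $L^S(1-m,\pi,\mu)$.

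Two issues arise at this point. First, the corollary is stated for $L^S(m,\pi,\mu)$, not for $\mu^{-1}$. We therefore apply Theorem \ref{thm_main} to $\mu^{-1}$ in place of $\mu$, which is permissible since $\mu^{-1}$ has the same signature. Second, the product over $S$ also contains the archimedean $\gamma$-factors $\gamma_v(m,\pi_v^\vee,\mu_v^{-1},\psi_v)$. At a critical point these equal ratios of $\Gamma$-functions, and each is a rational number times an explicit power of $\pi$ (and possibly of $\sqrt{-1}$). Tracking this power converts the period exponent $d(n+1)(1-m)$ into $dnm$.

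\emph{Remaining normalizations.} Three further factors must be shown to be Galois-compatible.
\begin{itemize}
\item The factors $\mu_v(\bdelta_v)^{-(2n+1)}|\bdelta_v|_v^{-(2n+1)(1/2-m)}$: the norms are rational powers of $D_F$, and $\prod_v\mu_v(\bdelta_v)$ is handled by Lemma \ref{lem_arith_Gauss_sum_L-val}(1).
\item The $\vep_v(0,\mu_v^{\pm1},\cdot)^{2n+1}$ factors, together with $G(\mu^{\pm1})^{n+1}$: these combine into $G(\mu)^n$ via $G(\mu)G(\mu^{-1}) = \mu(-1)\cdot(\text{rational})$ and Lemma \ref{lem_arith_Gauss_sum_L-val}(1),(2).
\item The powers of $D_F$: these reduce to $D_F^{n/2}$ up to rational factors.
\end{itemize}
Finally, we use that $\pi_{{}^\sigma\varphi}$ corresponds to ${}^\sigma\pi$ under the action described in \S\ref{subsec_Gal_act_on_AR}.

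\emph{Main obstacle.} The delicate step is bookkeeping the archimedean $\gamma$-factor and the Gauss-sum and $\bdelta$ normalizations exactly. We must get precisely $(2\pi\sqrt{-1})^{\sum\lambda+dnm}$ and $G(\mu)^n D_F^{n/2}$, with all leftover constants either rational or themselves Galois-equivariant, including the sign $\sqrt{-1}$ powers and $\mu_\infty(-1)$. To do this, we will compute $\gamma_\infty$ explicitly as
\[
\frac{\Gamma_\bbR(1-m+\epsilon')\prod_i\Gamma_\bbC(1-m+\lambda_i-i)}{\Gamma_\bbR(m+\epsilon')\prod_i\Gamma_\bbC(m+\lambda_i-i)}
\]
times the archimedean $\vep$-factor, and evaluate it using the reflection formulas.
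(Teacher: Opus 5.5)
Your proposal is correct and is essentially the paper's argument. You take $S_2=\emptyset$ (so $S_1=S\setminus S_\infty$) and apply Theorem \ref{thm_main} at $s_0'=1-m$ with $\mu^{-1}$ in place of $\mu$. You then convert via the functional equation of Theorem \ref{thm_FE_std_L}, the selfduality \eqref{selfduality_local_factor}, and the archimedean fact $\gamma_v(m,\pi_v^\vee,\mu_v^{-1},\psi_v)^{-1}\in(\sqrt{-1}\,\pi)^{(2n+1)(1-m)-n}\bbQ^\times$, and the exceptional case arises exactly as you describe.

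One small simplification: the factors $\mu_v(\bdelta_v)^{\pm(2n+1)}$ need no appeal to Lemma \ref{lem_arith_Gauss_sum_L-val}(1). They cancel identically once you write $\prod_{v\in S_1}\vep_v(0,\mu_v,\psi_{v,\bdelta_v^{-1}})=G(\mu)\prod_{v<\infty}\mu_v(\bdelta_v)^{-1}$. After that, $G(\mu)G(\mu^{-1})\in\bbQ^\times$, unchanged under $\mu\mapsto{}^\sigma\mu$, yields the $G(\mu)^n$ normalization.
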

\begin{proof}
    For an archimedean place $v$, we have $\gamma_v(1-s_0', \pi_v^\vee, \mu_v^{-1}, \psi_{v})^{-1} \in (\sqrt{-1}\,\pi)^{(2n+1)s_0'-n} \bbQ^\times$.
    Take $S_2 = \emptyset$ in Theorem \ref{thm_main}. 
    By replacing $\mu$ by $\mu^{-1}$ if necessary, we obtain the desired Galois equivariance using \eqref{selfduality_local_factor} except for the case $m=0, F=\bbQ, S = S_\infty$ and $\mu^2 = \mathbf{1}$.
    This exceptional case occurs only when $\mu = \mathbf{1}$, which is excluded.
\end{proof}

\begin{rem}\label{rem_s=0_except}
    The only exception to the arithmeticity for $m \leq 0$ is $m = 0, F=\bbQ,S=S_\infty$ and $\mu=\mathbf{1}$.
    This is exactly the case where $\pi$ is a cuspidal automorphic representation generated by a full level Siegel cusp form and $\mu = \mathbf{1}$.
    The corresponding $L$-function is the standard $L$-function $L^{S_\infty}(s,\pi)$, for which arithmeticity at $s=1$ was proved in \cite{91_Mizumoto, 00_Kozima} for the case $n \equiv 3 \bmod 4$.
    The functional equation yields the arithmeticity at $s=0$.
    The remaining case is $n \equiv 1 \bmod 4$, whose arithmeticity remains open when $n \neq 1$.
    Note that the corresponding Eisenstein series is not nearly holomorphic.
    See \cite[\S 17]{00_Shimura} and \cite{26_Takeda_Eis_ser} for details on the Eisenstein series.
\end{rem}

For the remaining case where $s_0=1,F= \bbQ$ and $\mu^2 = \mathbf{1}$, we establish the arithmeticity under an additional condition on the local components.

\begin{thm}\label{thm_s_0=1}
    With the notation of Corollary \ref{cor_S_1_empty}, suppose that $\mu^2 = \mathbf{1}$ and $m=1$ is a critical point.
    If there exists a nonarchimedean place $v$ such that $\pi_v$ is unramified and $\mu_v$ is ramified, then 
    \begin{align*}
    &\sigma\left(
    \frac{L^S(1, \pi, \mu)}{D_F^{(n+1)/2}G(\mu)^{n+1}(2\pi \sqrt{-1})^{\sum_{v \mid \infty}{\sum_{j=1}^n \lambda_{v,j} + d(n+1)}}\langle\varphi, \varphi\rangle}\right)
    \\
    &\qquad
    =
    \frac{L^S(1, \pi_{{^\sigma \varphi}}, {^\sigma \mu})}{D_F^{(n+1)/2}G({^\sigma \mu})^{n+1}(2\pi \sqrt{-1})^{\sum_{v \mid \infty}{\sum_{j=1}^n \lambda_{v,j} + d(n+1)}}\langle{^\sigma \varphi}, {^\sigma \varphi}\rangle}.
\end{align*}
\end{thm}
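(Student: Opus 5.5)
The plan is to apply Theorem \ref{thm_main} with $s_0 = s_0' = 1$, putting the distinguished place into $S_1$. Let $v_0$ be a nonarchimedean place with $\pi_{v_0}$ unramified and $\mu_{v_0}$ ramified. Set $S_1 = \{v_0\}$ and $S_2 = (S\setminus S_\infty)\setminus\{v_0\}$. Then $S_1 \sqcup S_2$ contains all ramified places, and $S_1 \neq \emptyset$, so the standing assumption of \S\ref{subsec_choice_global_section} for $s_0 = 1$, $F=\bbQ$, $\mu^2=\mathbf{1}$ holds. If $v_0\notin S$ initially, we first enlarge $S$ to contain $v_0$. This is harmless: at $v_0$ the local $L$-factor is $L_{v_0}(1,\pi_{v_0},\mu_{v_0}) = 1$ because $\mu_{v_0}$ is ramified and $\pi_{v_0}$ is unramified, so $L^S$ is unchanged. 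Theorem \ref{thm_main} then gives Galois equivariance of the normalized $L^S(1,\pi,\mu)$ multiplied by the extra factor
\[
\frac{\gamma_{v_0}(0,\pi_{v_0}^\vee,\mu_{v_0}^{-1},\psi_{v_0,\bdelta_{v_0}^{-1}})}{\vep_{v_0}(0,\mu_{v_0}^{-1},\psi_{v_0,\bdelta_{v_0}^{-1}})^{2n+1}}.
\]
It remains to show that this factor is Galois equivariant and nonzero, so that it can be removed.

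To handle the factor, we use \S\ref{LR_vs_std} to identify the Lapid--Rallis $\gamma$-factor with the standard one. Since $\pi_{v_0}^\vee$ is unramified, its $L$-parameter is a sum of unramified characters $\chi_1^{\pm1},\ldots,\chi_n^{\pm1}$ and $\mathbf{1}$ (the dual group is $\mathrm{SO}_{2n+1}$). The twisted $\gamma$-factor therefore factors as a product of Tate $\gamma$-factors:
\[
\gamma_{v_0}(s,\pi_{v_0}^\vee,\mu_{v_0}^{-1},\psi)=\gamma_{v_0}(s,\mu_{v_0}^{-1},\psi)\prod_{i=1}^n\gamma_{v_0}(s,\mu_{v_0}^{-1}\chi_i,\psi)\gamma_{v_0}(s,\mu_{v_0}^{-1}\chi_i^{-1},\psi).
\]
Each character $\mu_{v_0}^{-1}\chi_i^{\pm1}$ is ramified, so every $L$-factor is $1$ and each $\gamma$ is an $\vep$-factor. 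Since $\chi_i$ is unramified, $\vep(0,\mu_{v_0}^{-1}\chi_i^{\pm1},\psi_{v_0,\bdelta_{v_0}^{-1}}) = \chi_i(c)^{\pm1}\vep(0,\mu_{v_0}^{-1},\psi_{v_0,\bdelta_{v_0}^{-1}})$ by \eqref{def_epsilon_factor}, where $c$ generates the conductor. The $\chi_i(c)^{\pm1}$ contributions cancel in pairs. Hence the quotient above equals $1$, and it is certainly invariant.

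Two routine points need checking for this cancellation. First, the same computation must apply to $\pi_{{^\sigma\varphi},v_0}$. This holds because that representation is again unramified: by Remark \ref{rem_rep_gen_by_f^sigma} it is a Galois twist conjugated by $\diag(\tau_{\sigma}\cdot 1_n, 1_n)$, and ${^\sigma\mu_{v_0}}$ is still ramified. Second, the pairwise cancellation must survive the twist ${^\sigma}$ and the dependence on the conductor generator $c$. Once both are verified, the formula of Theorem \ref{thm_main} reduces exactly to the claimed one.

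The main obstacle I expect is making the identification of the Lapid--Rallis factor at the unramified $\pi_{v_0}$ with ramified twist rigorous. The coincidence of \S\ref{LR_vs_std} is stated for general representations, but I would double-check it directly via multiplicativity: $\pi_{v_0}$ is a constituent of an unramified principal series, and the Lapid--Rallis $\gamma$-factor is multiplicative under induction to Tate factors. I would also confirm that the normalization $\psi_{v_0,\bdelta_{v_0}^{-1}}$ introduces no stray $\chi_i(\bdelta_{v_0})$ factors; any such factors cancel in pairs as well.
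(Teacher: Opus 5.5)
Your proposal is correct and is essentially the paper's proof. You apply Theorem \ref{thm_main} with $s_0'=1$, $S_1=\{v_0\}$ and $S_2=S\setminus(S_\infty\sqcup\{v_0\})$, and observe that the extra factor at $v_0$ equals $1$: the parameter of $\pi_{v_0}^\vee$ twisted by $\mu_{v_0}^{-1}$ is $\mu_{v_0}^{-1}\oplus\bigoplus_i(\mu_{v_0}^{-1}\chi_i\oplus\mu_{v_0}^{-1}\chi_i^{-1})$ with every summand ramified, so the unramified twists cancel in pairs in the $\vep$-factors. You spell out what the paper asserts in one line; the only superfluous step is enlarging $S$, since $v_0$ already lies in $S$ because $\mu_{v_0}$ is ramified.
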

\begin{proof}
    Let $v_0$ be a place satisfying the assumption.
    The definition of the $\vep$-factor using the local Langlands correspondence shows that
    \begin{equation}\label{eqn_obstruction_s=1}
    \frac{\gamma_{v_0}(0, \pi^\vee_{v_0}, \mu_{v_0}^{-1}, \psi_{v_0, \bdelta_{v_0}^{-1}})}{\vep_{v_0}(0, \mu_{v_0}^{-1}, \psi_{v_0,\bdelta_{v_0}^{-1}})^{2n+1}}
    \end{equation}
    is $1$.
    Taking $S_1 = \{v_0\}$ and $S_2 = S \setminus (S_\infty \sqcup \{v_0\})$, we obtain the statement.
\end{proof}

To conclude the paper, we give a remark on the assumption of Theorem \ref{thm_s_0=1}.
\begin{rem}\label{rem_s=1}
The assumption that there exists a nonarchimedean place $v$ such that $\pi_v$ is unramified and $\mu_v$ is ramified can be replaced by the assumptions that  \eqref{eqn_obstruction_s=1} is finite, nonzero, and Galois equivariant.
The Galois equivariance of \eqref{eqn_obstruction_s=1} would follow formally from Galois actions on the zeta integrals and intertwining operators (cf.~\cite[Lemma 4.6]{90_Clozel}).
The main difficulty is to characterize the representation $\pi_v$ for which the expression \eqref{eqn_obstruction_s=1} is finite and nonzero.
\end{rem}

\bibliographystyle{alpha}
\bibliography{ref}

\end{document}